%%%%%%%%%%%%%%%%%%%%%%%%%%%%%%%%%%%%%%%%%%%%%%%%%%%%%%%
\documentclass[a4paper,12pt]{amsart}
\usepackage{amsfonts}
\usepackage{mathrsfs}
\usepackage{amsmath,amssymb,latexsym,amsfonts,amscd, yfonts, amsthm, pifont}
%%%%%%Jiang%%%%%%%
\usepackage{cases}
\usepackage{bm}

\usepackage{enumitem}
\usepackage[all]{xy}
\allowdisplaybreaks[4]
%%%%%%%%%%%%%%%%

%%%%%%%%%%%%%%%%%%%%%%%
%\usepackage{showlabels}
%%%%%%%%%%%%%%%%%%%%%%%

%%%%%%%%%%%%%%%%%%%%%%%%%%%%%
\newtheorem{thm}{Theorem}[section]
\newtheorem{lem}[thm]{Lemma}
\newtheorem{cor}[thm]{Corollary}

\newtheorem{prop}[thm]{Proposition}
\newtheorem{claim}[thm]{Claim}

\newtheorem{conj}[thm]{Conjecture}

\theoremstyle{definition}

\newtheorem{definition}[thm]{Definition}

\newtheorem{example}[thm]{Example}

\theoremstyle{remark}
\newtheorem{remark}[thm]{Remark}

%%%%%%%%%%%%%%%%%%%%%%%%%%%%%%%%%%%%%%%%%%%%%%%%%%

\numberwithin{equation}{section}

%%%%%%%%%%%%%%%%%%%%%%%%%%%%%%%%%%%%%%%%%%%%%%%%%%%%%%%%%%%%%
\newcommand{\bC}{{\mathbb C}}
\newcommand{\bA}{{\mathbb A}}

\newcommand{\bQ}{{\mathbb Q}}
\newcommand{\bP}{{\mathbb P}}

\newcommand{\rounddown}[1]{\lfloor{#1}\rfloor}

\newcommand\ZZ{{\mathbb{Z}}}

%%%%%%%%%%%Jiang%%%%%%%%%%%%%%%%

\newcommand\Supp{{\text{\rm Supp}}}

\newcommand\mult{{\text{\rm mult}}}

\newcommand\bR{{\mathbb{R}}}
%%%%%%%%%%%%%%%%%%%%%%

\newcommand\mld{{\text{\rm mld}}}
\newcommand\mm{{\mathfrak{m}}}
\newcommand\xx{{\bf {x}}}

%\newcommand\gcd{{\text{\rm gcd}}}

%%%%%%%%%%%%%%%%%%%%%%%
%%%%%%%Kollar%%%%%%%%%%%
%%%%%%%%%%%%%%%%%%%%%%%%%%%%%%%%%%%%%%%

\newcommand\bmu{{\bm \mu}}

\title{A gap theorem for minimal log discrepancies of non-canonical singularities in dimension three}
\date{June 7, 2019}

\author{Chen Jiang}
\address{Shanghai Center for Mathematical Sciences, Fudan University, Jiangwan Campus, 2005 Songhu Road, Shanghai, 200438, China}
\email{chenjiang@fudan.edu.cn}

%\thanks{The author was supported by JSPS KAKENHI Grant Number JP16K17558 and World Premier International Research Center Initiative (WPI), MEXT, Japan.}
%%%%%
%%%%% Start %%%%%%%

\begin{document}
\numberwithin{equation}{section}
%%%%%%%%%%%
\begin{abstract} 
We show that there exists a positive real number $\delta>0$ such that for any normal quasi-projective $\bQ$-Gorenstein $3$-fold $X$, if $X$ has worse than canonical singularities, that is, the minimal log discrepancy of $X$ is less than $1$, then the minimal log discrepancy of $X$ is not greater than $1-\delta$. As applications, we show that the set of all non-canonical klt Calabi--Yau $3$-folds are bounded modulo flops, and the global indices of all klt Calabi--Yau $3$-folds are bounded from above.
\end{abstract}
\subjclass[2000]{Primary 14J17; Secondary 14J30, 14E30}
\maketitle

\pagestyle{myheadings} \markboth{\hfill C. Jiang
\hfill}{\hfill A gap theorem for mlds of non-canonical singularities in dimension $3$\hfill}
\tableofcontents

%%%%%%%%%%%%%%%%%
\section{Introduction}
%%%%%%%%%%%%%%%%%
Throughout this paper, we work over the complex number field $\bC$.

Canonical and terminal singularities, introduced by Reid, appear naturally in the minimal model program and play important roles in the birational classification of higher dimensional algebraic varieties. Such singularities are well-understood in dimension $3$, while the property of non-canonical singularities is still mysterious. In this paper, we investigate the difference between canonical and non-canonical singularities via minimal log discrepancies.

The minimal log discrepancy (mld) of a normal quasi-projective $\bQ$-Gorenstein  variety $X$, introduced by Shokurov, is defined to be the infimum of log discrepancies of all prime divisors on all birational models of $X$. It is an important invariant for singularities in the minimal model program, and is known to be related to the termination of flips and other topics of interest, see \cite{letter5, BS}. 
Here we recall the following deep conjecture regarding the behavior of minimal log discrepancies proposed by Shokurov. 
\begin{conj}[ACC for minimal log discrepancies, cf. {\cite[Problem 5]{Sho88}, \cite[Conjecture 4.2]{MR1420223}}]\label{c.mld.strong}
Fix a positive integer $d$ and a DCC set $I \subset [0,1]$. Then the set
\[
\{\mathrm{mld}_{\eta_Z}(X, \Delta) \mid (X, \Delta) \text{ is lc}, \; \dim X\leq d, \;
Z\subset X, \; \mathrm{coeff}(\Delta) \in I \}
\]
satisfies the ACC. 
\end{conj}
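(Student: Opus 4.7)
My plan is to argue by contradiction. Suppose there is a strictly increasing sequence of lc pairs $(X_i, \Delta_i)$ of dimension at most $d$ with $\coeff(\Delta_i) \in I$ and subvarieties $Z_i \subset X_i$ such that $a_i := \mld_{\eta_{Z_i}}(X_i, \Delta_i) \nearrow a_\infty$. For each $i$ choose a prime divisor $E_i$ over $X_i$ whose center is contained in $\overline{Z_i}$ and which computes the mld, i.e., $a(E_i; X_i, \Delta_i) = a_i$. My first step would be to replace the germ by a divisorial extraction of $E_i$, producing a birational morphism $\pi_i \colon (Y_i, B_i + (1-a_i)E_i) \to (X_i, \Delta_i)$ with $Y_i$ $\bQ$-factorial, $E_i$ the unique $\pi_i$-exceptional divisor, and $(Y_i, B_i + E_i)$ plt over $X_i$. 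Existence of such a plt blow-up follows from standard MMP techniques and reduces the question to studying adjoint discrepancies on a single exceptional divisor.

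The next step is to organise the pairs $(E_i, \mathrm{Diff}_{E_i}(B_i))$ into a bounded family of log Fano varieties of dimension $d-1$. The tools I would import are: (a) ACC for log canonical thresholds (Hacon--McKernan--Xu), which forces the coefficients of $B_i$, and hence of the different on $E_i$ via inversion of adjunction, to lie in a DCC subset of $[0,1]$; (b) Birkar's BAB theorem, which, combined with the observation that $(E_i, \mathrm{Diff}_{E_i}(B_i))$ is $\epsilon$-lc for some uniform $\epsilon>0$ whenever $a_\infty > 0$, puts these pairs in a bounded family of $\epsilon$-lc log Fanos; and (c) Birkar's theory of $n$-complements, which produces bounded $\bQ$-complements and constrains the possible discrepancy values $a_i$ that can occur within a bounded family. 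Combining these, one hopes to conclude that $\{a_i\}$ takes only finitely many values in any neighbourhood of $a_\infty$, contradicting strict monotonicity.

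The main obstacle, and the reason this remains one of the central open conjectures in birational geometry, is that mlds are not well-behaved under specialisation: even when the ambient family is bounded, the mld at a marked point can jump in the limit, so a naive compactness argument breaks. One needs very tight control over the choice of extraction $E_i$ and a structural understanding of how divisorial discrepancies deform in families, neither of which is available in sufficient generality at present. One should also worry separately about the case $a_\infty = 0$, where $\epsilon$-lc BAB fails and one must instead appeal to boundedness of lc Calabi--Yau pairs with a $\bQ$-complement, which is itself known only in limited settings.

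In the present paper this whole programme is side-stepped, and only the narrow window $a_\infty = 1$ with $\dim X = 3$ is treated, by exploiting the explicit classification of terminal and canonical threefold singularities together with BAB in dimension three. Extending the argument to arbitrary accumulation points $a_\infty$ and to higher dimensions is precisely the crux of Conjecture~\ref{c.mld.strong}, and I would expect a full proof to require both a divisorial-extraction statement in all dimensions and a new constructibility/semicontinuity theorem for mlds in bounded families.
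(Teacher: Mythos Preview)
The statement you were asked to prove is Conjecture~\ref{c.mld.strong}, which the paper explicitly records as an \emph{open conjecture} and does not prove. There is therefore no ``paper's own proof'' to compare your proposal against. What the paper actually establishes is the much narrower Theorem~\ref{gap main}: that $1$ is not an accumulation point from below for $\mld(X)$ of $\bQ$-Gorenstein $3$-folds with no boundary. You appear to be aware of this, since your last two paragraphs acknowledge that the full conjecture is open and that the paper treats only a special case; but then your submission is not a proof proposal at all --- it is a survey of obstructions.

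On the content of your sketch: the plt-blow-up/BAB/complements strategy you outline is the one used by Han--Liu--Shokurov (reference \cite{HLS19} in the paper) for \emph{exceptional} singularities, and you correctly identify its failure mode in general (lack of semicontinuity of mlds in families). Two points of inaccuracy are worth flagging. First, for $\mld_{\eta_Z}$ the computing divisor $E_i$ must have center \emph{equal} to $Z_i$, not merely contained in $\overline{Z_i}$. Second, your description of the paper's method is off: the proof of Theorem~\ref{gap main} does not use BAB. It proceeds by reducing to \emph{extremely non-canonical} isolated hyperquotient singularities (Section~\ref{reduction ENC}) and then replaying, with substantial modifications, Mori--Reid's classification game for $3$-dimensional terminal singularities (Theorem~\ref{key thm}, Lemma~\ref{non-canonical lemma}, and the case analysis of Section~\ref{section HQ}). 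BAB and Birkar's log Calabi--Yau fibration theorem enter only in Section~\ref{section bounded}, for the boundedness applications, not for the gap statement itself.
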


Here ACC stands for the ascending chain condition whilst DCC stands for the descending chain condition. 

 Conjecture \ref{c.mld.strong} is proved in dimension 2 by Alexeev \cite{Alexeev93} and Shokurov \cite{Sho91}, and
for toric pairs by Borisov \cite{Borisov} and Ambro \cite{Ambro}. Although some partial results are known \cite{Kaw14, Nak16, MN18, Kaw18, Liu18, Mor18, HLS19}, Conjecture \ref{c.mld.strong} still remains open in its full generality in dimensions $3$ and higher.

Recall that for a normal quasi-projective $\bQ$-Gorenstein  variety $X$, $\mld(X)\geq 1$ if and only if $X$ has canonical singularities. Hence in this paper, we are only interested in the following special case of Conjecture \ref{c.mld.strong}.

\begin{conj}[1-gap conjecture for minimal log discrepancies]\label{c.mld}Fix a positive integer $d$.
Then $1$ is not an accumulation point from below for the set of minimal log discrepancies of all normal quasi-projective $\bQ$-Gorenstein varieties of dimension $d$.\end{conj}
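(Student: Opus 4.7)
The plan is to establish the conjecture in dimension $3$, which is the paper's actual contribution, arguing by contradiction. Suppose there exist $\bQ$-Gorenstein $3$-fold germs $(X_i, x_i)$ with $\mld(X_i, x_i) = 1 - \epsilon_i$ and $\epsilon_i \searrow 0$. Since mld close to $1$ forces klt, we may assume each $X_i$ is klt. For each $i$, fix a prime divisor $E_i$ over $X_i$ computing the mld.

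The first step is to realize $E_i$ as an honest divisor on a birational model. Using existence of divisorial extractions of Koll\'ar components (``plt blow-ups'') in the klt setting, construct a projective $\bQ$-factorial birational morphism $f_i \colon Y_i \to X_i$ whose unique exceptional divisor is $E_i$, with $-E_i$ being $f_i$-ample. The crepant identity
\[
K_{Y_i} + (1 - a_i)\,E_i = f_i^{*} K_{X_i}, \qquad a_i = 1 - \epsilon_i,
\]
together with the mld hypothesis shows that $(Y_i, E_i)$ is plt.

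Next, apply plt adjunction to obtain a normal surface $E_i$ and a klt adjoint pair $(E_i, \Theta_i) := (E_i, \mathrm{Diff}_{E_i}(0))$ whose boundary coefficients lie in the DCC set $\{(m-1)/m : m \in \ZZ_{>0}\} \subset [0,1)$ coming from the local indices of $Y_i$ along curves of $E_i$. Since $-E_i|_{E_i}$ is ample, the identity
\[
K_{E_i} + \Theta_i \sim_{\bQ} (K_{Y_i} + E_i)|_{E_i} \sim_{\bQ} (1 - \epsilon_i)\,(E_i|_{E_i})
\]
makes $(E_i, \Theta_i)$ a klt log del Pezzo. If needed, one first runs a suitable MMP on $Y_i$ over $X_i$ to simplify the geometry of $E_i$ before restricting.

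Finally, I would invoke precise inversion of adjunction (Kawakita) to translate the hypothesis $\mld(X_i, x_i) = 1 - \epsilon_i \nearrow 1$ into the corresponding statement $\mld(E_i, \Theta_i; p_i) \to 1^{-}$ at suitably chosen centers $p_i \in E_i$. This produces a sequence of klt surface pairs with DCC boundary coefficients whose minimal log discrepancies accumulate to $1$ from below, directly contradicting the ACC for minimal log discrepancies in dimension $2$ (Alexeev, Shokurov). The hardest part is the third step: one must carefully track the coefficients of $\Theta_i$, which can include components with coefficients close to $1$ coming from high-index singularities of $Y_i$, and then execute the inversion-of-adjunction transfer in a way that preserves both the DCC structure and the approach-to-$1$ of the mlds as $\epsilon_i \to 0$. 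A supplementary boundedness input (e.g.\ Alexeev's boundedness of $\epsilon$-klt log del Pezzos with DCC coefficients, which applies once $\mld(E_i,\Theta_i)$ is close to $1$) could be used to argue uniformly, if a pointwise comparison is insufficient.
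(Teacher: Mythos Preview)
Your proposal has a genuine gap at step~3, and the paper's argument is entirely different.

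The central problem is the proposed ``translation'' $\mld(X_i)\to 1^-\Rightarrow \mld(E_i,\Theta_i;p_i)\to 1^-$. Precise inversion of adjunction gives $\mld_p(E_i,\Theta_i)=\mld_p(Y_i,E_i)$, and unwinding with $K_{Y_i}+E_i=f_i^*K_{X_i}+a_iE_i$ yields, for any exceptional $F$ over $Y_i$ with center on $E_i$,
\[
a(F;Y_i,E_i)\;=\;a(F;X_i)\,-\,a_i\cdot\mult_F(g^*E_i).
\]
The hypothesis $\mld(X_i)=1-\epsilon_i$ records only $a(E_i;X_i)$, which has vanished from this formula once $E_i$ is extracted; for $F\neq E_i$ nothing forces $a(F;Y_i,E_i)$ to lie just below~$1$. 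Indeed, after the paper's own reduction to \emph{extremely non-canonical} singularities every $F\neq E_0$ has $a(F;X)>1$, while $\mult_F(g^*E_0)$ is uncontrolled, so the right-hand side can land anywhere. There is also a smaller gap earlier: the divisor computing $\mld$ need not be a Koll\'ar component, so your plt claim for $(Y_i,E_i)$ is unjustified. The suggested fallback to Alexeev boundedness of $\epsilon$-klt log del Pezzos does not repair this, since boundedness of $(E_i,\Theta_i)$ carries no information back to $a(E_i;X_i)$.

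The paper instead reduces (Theorem~\ref{X<Y}) to extremely non-canonical isolated singularities, passes to the index-one cover to land on an isolated cDV hyperquotient, and then replays the Mori--Reid classification of $3$-fold terminal singularities: a new toric criterion for a hyperquotient to be extremely non-canonical (Theorem~\ref{key thm}) and a ``non-canonical lemma'' (Lemma~\ref{non-canonical lemma}) feed into a lengthy case analysis over the $cA$, odd, and $cD$--$E$ types (Section~\ref{section HQ}). Surface ACC enters only peripherally (Lemma~\ref{gap dim 2} for the curve-center case; Corollaries~\ref{acc quot 3} and~\ref{acc quot 5} for cyclic quotients in dimensions $3$ and $5$), not through adjunction to a Koll\'ar component.
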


Conjecture \ref{c.mld} asserts that there is a gap for minimal log discrepancies between canonical and non-canonical singularities, and it already has interesting applications related to the boundedness of Calabi--Yau varieties (see \cite{rccy3}). Note that in Conjecture \ref{c.mld}, we are interested in the global minimal log discrepancies rather than the local ones at closed points. Although it is much weaker than Conjecture \ref{c.mld.strong}, Conjecture \ref{c.mld} was still open even in dimension $3$.

As the main result of this paper, we give an affirmative answer to Conjecture \ref{c.mld} in dimension $3$.

\begin{thm}%[=Theorem \ref{gap dim 3}]
\label{gap main}
There exists a positive real number $\delta>0$ with the following property: if $X$ is a normal quasi-projective $\bQ$-Gorenstein $3$-fold with $\mld(X)<1$, then $\mld(X)\leq 1-\delta$.
\end{thm}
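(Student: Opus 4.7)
The plan is to argue by contradiction. Suppose there exist normal quasi-projective $\bQ$-Gorenstein $3$-folds $X_i$ with $\mld(X_i) = 1 - \epsilon_i$ and $\epsilon_i \to 0^+$. Since $\mld(X_i) > 0$ for large $i$, each $X_i$ is klt, so BCHM is available. Pick a prime divisor $E_i$ over $X_i$ realising $\mld(X_i)$, with center $Z_i \subsetneq X_i$, and pass to a neighborhood of $Z_i$. Running a suitable relative MMP one extracts $E_i$ by a divisorial contraction $f_i\colon Y_i \to X_i$ whose only exceptional divisor is $E_i$, with $Y_i$ being $\bQ$-factorial klt, with $-E_i$ being $f_i$-ample, and with
\[
K_{Y_i} + \epsilon_i E_i = f_i^* K_{X_i}.
\]
Then $(Y_i, \epsilon_i E_i)$ is plt, and in particular $E_i$ is a normal surface.

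Next I would pass to $E_i$ by adjunction. Write $(K_{Y_i} + E_i)|_{E_i} = K_{E_i} + \Theta_i$ with $\Theta_i = \text{Diff}_{E_i}(0) = \sum_j (1 - 1/m_j)\, C_j$ the different. Assuming first that $Z_i$ is a closed point, the identity above restricts to
\[
K_{E_i} + \Theta_i \;\sim_{\bQ}\; -(1 - \epsilon_i)\, A_i,
\]
with $A_i := -E_i|_{E_i}$ ample on $E_i$. Hence $(E_i, \Theta_i)$ is a klt log del Pezzo surface, and by inversion of adjunction its mld reflects that of $(Y_i, \epsilon_i E_i)$, which equals $\mld(X_i) = 1 - \epsilon_i \to 1$. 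The case where $Z_i$ is a curve would be handled by a parallel analysis using the induced fibration $E_i \to Z_i$.

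The core technical task is to prove that $\{(E_i, \Theta_i)\}$ forms a log bounded family, or at least that the coefficients $1 - 1/m_j$ of the differents lie in a fixed DCC set. This splits into: (a) bounding the transverse indices of $Y_i$ along the codimension-one strata of $E_i$, using the plt condition on $(Y_i,\epsilon_i E_i)$ together with Reid's classification of $3$-fold terminal/klt singularities and Hayakawa--Kawakita-style analysis of $3$-fold divisorial contractions of small discrepancy; and (b) once the coefficient set is DCC, invoking Alexeev's boundedness of log del Pezzo surfaces with DCC coefficients. Log boundedness of $\{(E_i, \Theta_i)\}$, combined with the two-dimensional ACC for mlds (Alexeev--Shokurov), would then contradict $\mld(E_i, \Theta_i) \to 1^-$.

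The step I expect to be the main obstacle is (a): translating the global assumption $\mld(X_i) \to 1^-$ into uniform local bounds on the indices of $Y_i$ along $E_i$. Equivalently, one must show that a $3$-fold klt singularity cannot admit a divisorial extraction of arbitrarily small discrepancy without forcing the transverse indices of the extracted divisor to be uniformly bounded. This local boundedness is the technical heart of the argument, and is essentially a classification-theoretic question on $3$-fold divisorial contractions with discrepancy close to zero.
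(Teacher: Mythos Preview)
Your proposed route diverges substantially from the paper's, and the adjunction step contains a genuine error. You assert that by inversion of adjunction the mld of $(E_i,\Theta_i)$ ``reflects that of $(Y_i,\epsilon_i E_i)$, which equals $\mld(X_i)=1-\epsilon_i$''. But inversion of adjunction compares $(E_i,\Theta_i)=(E_i,\text{Diff}_{E_i}(0))$ with the pair $(Y_i,E_i)$ carrying coefficient~$1$ on $E_i$, not with $(Y_i,\epsilon_i E_i)$. For an exceptional $F$ over $Y_i$ one computes
\[
a(F;Y_i,E_i)=a(F;X_i)-(1-\epsilon_i)\,\mult_F(\pi^*E_i),
\]
and since the multiplicity $\mult_F(\pi^*E_i)$ is uncontrolled, there is no reason for $\mld(E_i,\Theta_i)$ to approach $1$, nor even for $(Y_i,E_i)$ to be plt, so $(E_i,\Theta_i)$ need not be klt at all. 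The only visible link between $\epsilon_i$ and the surface data is the degree relation $-(K_{E_i}+\Theta_i)\equiv (1-\epsilon_i)A_i$, which does not interact with the surface ACC for mlds. Thus even granting your step~(a), the contradiction in your final paragraph does not follow; the gap is not only in~(a) but in the bridge from the surface back to $\epsilon_i$.

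The paper takes a different path that never restricts to the extracted divisor. It first runs an MMP to reduce to an \emph{extremely non-canonical} $X$: exactly one exceptional divisor $E_0$ has $a(E_0;X)<1$ and none has $a=1$. If the center of $E_0$ is a curve, a general hyperplane section plus the elementary surface gap $\mld\le\frac{2}{3}$ finishes. If the center is a point, one passes to the canonical index-$1$ cover $(Q\in Y)\to(P\in X)$; then $(Q\in Y)$ is either smooth, isolated cDV, or isolated non-cDV canonical. The smooth case is a cyclic quotient, handled by the toric ACC; the non-cDV case carries a divisor of log discrepancy~$1$, which via the ramification formula forces $\mld(X)\le\frac{1}{2}$. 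The real work is the isolated cDV hyperquotient case, where the paper replays the Mori--Reid classification machinery (toric weightings $\alpha\in N\cap\sigma$, the terminal lemma, and a new ``non-canonical lemma'' that reduces to cyclic-quotient ACC in dimensions $3$ and $5$) to rule out $\mld\in(1-\delta,1)$. This is indeed the classification-theoretic question you anticipate at the end, but it is carried out on the index-$1$ cover via weighted valuations, not through adjunction to $E_i$ and log del Pezzo boundedness.
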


\begin{remark}
We explain the strategy of proving Theorem \ref{gap main} briefly. The goal is to show that there is no $3$-fold $X$ with $1-\delta<\mld(X)<1$ for a sufficiently small $\delta>0$. The first step is to reduce to the case that all but one exceptional divisors over $X$ have log discrepancies greater than $1$, in which case $X$ is called {\it extremely non-canonical}  (see Section \ref{reduction ENC}). Also it is easy to reduce to the case that $X$ is an isolated singularity which is a hyperquotient of an isolated cDV singularity in $\bA^4$. To deal with this case, we replay the game for the classification of $3$-dimensional terminal singularities by Mori \cite{Mori85} as explained by Reid \cite{YPG}, and show that such a singularity does not exist. Of course in our situation rules are changed which makes the game more complicated, but it will be in control after some essential modifications (see Section \ref{section HQ} for more explanations).
\end{remark}

\begin{remark}In many applications, it suffices to know the existence of such a positive number $\delta$. But by our method, it is possible to determine the number $\delta$ in Theorem \ref{gap main} effectively.
In fact, by the proof of Theorem  \ref{gap main}, we can take $\delta=\delta_0$, where $\delta_0$ is a positive constant given in Lemma \ref{non-canonical lemma} which is related to the gap of minimal log discrepancies of isolated cyclic quotient singularities in dimensions $3$
and $5$.
%and $\delta_3$ is a positive constant coming from the gap of minimal log discrepancies of isolated cyclic quotient singularities of dimensions $3$ (see Corollary \ref{acc quot 3}). 
%It is not hard to show that $\delta_3=\frac{1}{13}$, but $\delta_0$ is more complicated, see Subsection \ref{subsection delta35} for detailed discussions. 
After the first version of this paper appeared on arXiv, the author was informed by Liu and Xiao \cite{LiuXiao} that they computed that $\delta_0=\frac{1}{19}$ in Lemma \ref{non-canonical lemma}, which then gives an optimal value $\delta=\frac{1}{13}$ for Theorem \ref{gap main} after some extra effort. 
%On the other hand, it seems that  $\frac{1}{19}$ is not optimal for Theorem \ref{gap main}, one guess, according to M\textsuperscript{c}Kernan, is that we can take  $\delta=\frac{1}{13}$ in Theorem \ref{gap main}, which has yet to be confirmed.
\end{remark}

Next we explain the applications of Theorem \ref{gap main} to boundedness problem for singular Calabi--Yau $3$-folds.

A normal projective variety $X$ is a {\it Calabi--Yau} variety if $K_X\equiv 0$. According to the minimal model program, Calabi--Yau varieties form a fundamental class in birational geometry as building blocks of algebraic varieties. Calabi--Yau varieties are also interesting objects in differential geometry and mathematical physics.
Hence, it is interesting to ask whether such kind of varieties satisfies any finiteness properties, namely, whether some invariants of them are in a finite set, or they can be parametrized by finitely many families. For recent developments on this direction in birational geometry, see \cite{Ale94, AM, DCS, rccy3, Birkar18}. We recall that Alexeev \cite[Theorem 6.9]{Ale94} showed that all 
 {Calabi--Yau} varieties in dimension $2$ with worse than du Val singularities form a bounded family. Motivated by Alexeev's result, \cite{rccy3} considers rationally connected klt Calabi--Yau $3$-folds and showed their boundedness modulo flops assuming Theorem \ref{gap main}.

As an application of Theorem \ref{gap main}, we show that the set of all non-canonical klt Calabi--Yau $3$-folds are bounded modulo flops, which is  a weak version of the analogue of Alexeev's result in dimension $3$.
%Note that in \cite{rccy3} only considers rationally connected klt Calabi--Yau $3$-folds, but we are able to remove the rational connectedness condition in this paper.

\begin{thm}[=Theorem \ref{bdd flop cy}]\label{main 3}
The set of non-canonical klt Calabi--Yau $3$-folds forms a bounded family modulo flops.
\end{thm}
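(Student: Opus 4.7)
The plan is to use Theorem~\ref{gap main} to extract from a non-canonical klt Calabi--Yau $3$-fold $X$ a divisor with bounded discrepancy, producing an auxiliary klt Calabi--Yau pair whose boundary coefficient is bounded below by $\delta$, and then to appeal to boundedness, modulo flops, for $3$-dimensional klt Calabi--Yau pairs with coefficients bounded away from zero.

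Concretely, let $X$ be a non-canonical klt Calabi--Yau $3$-fold. Theorem~\ref{gap main} yields $\mld(X)\le 1-\delta$, so there exists a prime divisor $E$ over $X$ with log discrepancy $a(E,X)\le 1-\delta$. By a standard divisorial extraction (an extremal extraction of the valuation $E$ via the MMP, in the style of Birkar--Cascini--Hacon--McKernan and Kollár), one can produce a projective birational morphism $f\colon Y\to X$ with $Y$ $\bQ$-factorial klt whose unique $f$-exceptional prime divisor is $E$. Setting $b:=1-a(E,X)\in[\delta,1)$, one has
\[
K_Y+bE=f^{*}K_X\equiv 0,
\]
so $(Y,bE)$ is a klt Calabi--Yau pair of dimension $3$ whose single boundary coefficient $b$ is at least $\delta$.

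The next step is to invoke boundedness, modulo flops, of the class of such pairs. This is in the spirit of Alexeev's result in dimension two and of \cite{rccy3} for rationally connected klt Calabi--Yau $3$-folds. The expected route is to run a suitable MMP on $Y$ (or on $(Y,bE)$) and analyze its output: either one ends with a Fano-type contraction, where Birkar's boundedness of complements applies together with the $\epsilon$-lc hypothesis encoded in $b\ge\delta$; or one ends with a non-trivial fibration, which is handled via a canonical bundle formula and induction on dimension, with fibres being klt Calabi--Yau varieties or pairs of smaller dimension. Granted such boundedness for $(Y,bE)$, and noting that $X$ is recovered from $Y$ by the single divisorial contraction $f$ of $E$, boundedness of $X$ modulo flops follows.

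The principal obstacle is establishing boundedness, modulo flops, for the auxiliary pairs $(Y,bE)$ in the generality needed here, rather than only in the rationally connected case treated in \cite{rccy3}; this requires a careful combination of canonical bundle formula techniques in dimension $3$ with the uniform lower bound $b\ge \delta$ provided by Theorem~\ref{gap main}. A secondary technical point is to confirm that the extraction $f\colon Y\to X$ can be chosen so that $Y$ is $\bQ$-factorial and $(Y,bE)$ is klt, and that different choices of $E$ satisfying $a(E,X)\le 1-\delta$ produce models $Y$ related by a sequence of flops, so that passage to the flop equivalence class of $X$ is independent of the choices made.
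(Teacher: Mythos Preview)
Your overall architecture matches the paper: extract a divisor $E$ with $a(E;X)\le 1-\delta$ via Theorem~\ref{gap main} and \cite[Corollary~1.4.3]{BCHM}, form the klt log Calabi--Yau pair $(Y,bE)$ with $b\ge\delta$, invoke log boundedness modulo flops for such pairs, and descend back to $X$. But you explicitly flag the ``principal obstacle'' --- removing the rational connectedness hypothesis from \cite{rccy3} --- and do not supply the idea that resolves it.

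The missing ingredient is that the extracted divisor $E$ is \emph{uniruled}, by Hacon--M\textsuperscript{c}Kernan \cite{HM07} (since $a(E;X)<1$). This is precisely the hypothesis of Proposition~\ref{cor.bdd.lcy3fold}, and it is what makes the MMP argument go through without rational connectedness of $X$: after running a $(K_Y+B')$-MMP one reaches a Mori fiber space $Y'\to Z$ in which the strict transform of $E$ dominates $Z$, hence $Z$ is uniruled. When $\dim Z=2$ this, together with Lemma~\ref{non-canonical=uniruled}, excludes the case $K_Z\equiv 0$, $\Delta=0$, $Z$ canonical, so Alexeev's theorem \cite[Theorem~6.9]{Ale94} bounds $Z$; then Birkar's boundedness for Fano-type log Calabi--Yau fibrations \cite[Theorem~1.2]{Birkar18} bounds $Y'$. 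Your sketch (``canonical bundle formula and induction on dimension'') does not single out this point, and without uniruledness the base $Z$ could a priori be an unbounded canonical Calabi--Yau surface.

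Two smaller issues. First, the $\epsilon$-lc property of $(Y,bE)$ required in Proposition~\ref{cor.bdd.lcy3fold} is not ``encoded in $b\ge\delta$''; it comes from Global ACC \cite[Theorem~1.5]{HMX14} applied to $X$, which gives a uniform $2\epsilon$ with $X$ (hence $(Y,bE)$) $2\epsilon$-lc. Second, your ``secondary technical point'' --- that different choices of $E$ yield flop-equivalent $Y$ --- is neither needed nor the right mechanism. The paper instead contracts $E$ simultaneously inside the log bounded family of $(Y,bE)$ by running a relative $(K_{\mathcal{W}'}+(1-\epsilon)\mathcal{E}')$-MMP over the base, exactly as in the second half of \cite[Theorem~5.1]{rccy3}, and applies Noetherian induction.
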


Note that \cite{rccy3} only considers Theorem \ref{main 3} for rationally connected klt Calabi--Yau $3$-folds, but we are able to remove the rational connectedness condition in this paper.

As a consequence, the global indices of all klt Calabi--Yau $3$-folds are bounded from above.
\begin{cor}[=Corollary \ref{bdd index cy}]\label{main 2}
There exists a positive integer $m$ such that for any klt Calabi--Yau $3$-fold $X$, $mK_X\sim 0$.
\end{cor}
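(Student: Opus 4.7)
The plan is to split into two cases according to whether $X$ has canonical singularities, applying Theorem~\ref{main 3} in the non-canonical case and reducing the canonical case to a known bound. For the non-canonical case, Theorem~\ref{main 3} produces a projective morphism $\mathcal{X}\to T$ with $T$ of finite type such that some flop $X'$ of $X$ appears as a fiber $\mathcal{X}_t$. Since a flop is an isomorphism in codimension one and $\OO_X(mK_X)$ is reflexive, the birational identification $X\dashrightarrow X'$ sends $\OO_{X'}(mK_{X'})$ to $\OO_X(mK_X)$, so the global index of $K$ is a flop invariant, and it suffices to bound the index of $K_{\mathcal{X}_t}$ uniformly in $t$. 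Boundedness of the klt family yields an integer $r$ with $rK_{\mathcal{X}_t}$ Cartier for all $t$, and since $rK_{\mathcal{X}_t}\equiv 0$ this Cartier divisor defines a torsion line bundle. After a finite stratification of $T$, the relative Picard scheme is of finite type, so the orders of these torsion classes are bounded by a single integer $m_1$, giving $m_1K_X\sim 0$ for every non-canonical klt Calabi--Yau $3$-fold $X$.

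If $X$ is canonical, take a small $\bQ$-factorial terminalization $f\colon Y\to X$, which exists in dimension three and is crepant, so $K_Y\equiv 0$ and $Y$ is terminal, and $f_*\OO_Y(mK_Y)=\OO_X(mK_X)$ implies $mK_X\sim 0$ if and only if $mK_Y\sim 0$. The index of a terminal Calabi--Yau $3$-fold is bounded above by a universal constant $m_2$ (a classical result, for instance via Kawamata's analysis combining orbifold Riemann--Roch for terminal $3$-folds with abundance). Setting $m=\mathrm{lcm}(m_1,m_2)$ then gives the desired uniform bound.

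The main obstacle I anticipate is the uniform bound on torsion orders in $\Pic$ across the family in the non-canonical case: flop-invariance of the index reduces the problem to a single bounded family $\mathcal{X}\to T$, but extracting a uniform $m_1$ requires constructibility of the relative Picard scheme (or, equivalently, a uniform bound on $H^2(\mathcal{X}_t,\ZZ)_{\text{tors}}$) in the klt setting. Once these standard ingredients are in place, Theorem~\ref{main 3} does the bulk of the work, and the canonical case reduces cleanly via terminalization to the known index bound for terminal Calabi--Yau $3$-folds.
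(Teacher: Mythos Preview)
Your overall structure matches the paper's: split into canonical versus non-canonical, handle the canonical case via a crepant terminalization and the Kawamata--Morrison bound for terminal Calabi--Yau $3$-folds, and in the non-canonical case invoke Theorem~\ref{main 3} together with the observation that the global index is invariant under small birational maps.

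The genuine difference is in how you extract a uniform index bound from the bounded family. The paper isolates this as a separate lemma (Lemma~\ref{mKY}): after using Global ACC to get a uniform $\epsilon$-lc bound, it stratifies the base and passes to a log resolution $(\mathcal{W}_i,\mathcal{E}_i)\to\mathcal{Z}_i\to T_i$ that is log smooth over each $T_i$, observes that for any fiber which happens to be $\epsilon$-lc Calabi--Yau one has $h^0(\mathcal{Z}_{i,t},mK_{\mathcal{Z}_{i,t}})=h^0(\mathcal{W}_{i,t},\lfloor m(K_{\mathcal{W}_{i,t}}+(1-\epsilon)\mathcal{E}_{i,t})\rfloor)$, and then applies invariance of log plurigenera \cite[Theorem~4.2]{HMX18} to the right-hand side to conclude that this number is independent of $t$ on each stratum. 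Since $h^0(mK)=1$ if and only if $mK\sim 0$, this immediately bounds the index. Your proposed route through a uniform Cartier index and torsion in the relative Picard scheme can in principle be pushed through, but it is more delicate: you must stratify so that the family is flat with integral fibers and $rK$ globalizes to a line bundle on the total space, restrict further to the (constructible) locus of fibers that are actually klt Calabi--Yau so that $rK$ is torsion rather than merely numerically trivial, and then argue that a section of $\mathrm{Pic}^\tau_{\mathcal{X}/T}$ which is torsion at the generic point has bounded order on every fiber. The paper's $h^0$ argument bypasses all Picard-scheme subtleties by working directly on a log smooth model, which is why it is the cleaner choice here; your anticipated ``main obstacle'' is real, and the paper simply avoids it rather than resolving it.
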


Here we remark that Theorem \ref{main 2} was known for canonical Calabi--Yau $3$-folds by Kawamata \cite{K=0} and Morrison \cite{Morrison}. So we only need to deal with the case of non-canonical klt Calabi--Yau $3$-folds, which follows from Theorem \ref{main 3}.
Also we recall that Blache and Zhang \cite{Blache, DQ1, DQ2} studied klt Calabi--Yau surfaces 
(also known as log Enriques surfaces) and showed that for any such surface $S$, $mK_S\sim 0$ for some $m\leq 21$. So Corollary \ref{main 2} is a generalization of  their results in dimension $3$. 
Of course it is very interesting to ask for an effective bound of the global indices, but our method can not give an effective bound.

It is worthwhile to mention that Jingjun Han brought our attention to another application of Theorem \ref{gap main}, which is the termination of log twists (introduced by Birkar and Shokurov) in dimension $3$. See \cite[Proposition 3.4]{BS} for details.

%Finally  we remark that Conjecture \ref{main 2} is a special case of a conjecture of Shokurov stating that fix a positive integer $d$ and a finite set of positive rational numbers $I$, the global index of $K_X+B$ is uniformly bounded, where $(X, B)$ is a log canonical Calabi--Yau pair of dimension $d$ and the coefficients of $B$ 
%belong to $I$. In a forthcoming paper joint with Jingjun Han and Vyacheslav Shokurov, we will treat this conjecture  in dimension $3$ and discuss some related topics.

\bigskip

This paper is organized as follows. In Section \ref{prelim}, we recall basic definitions and make preparation for the proof.  In Section \ref{reduction ENC}, we reduce Conjecture \ref{c.mld} to the case of extremely non-canonical singularities.
In Section \ref{section HQ}, we prove Theorem \ref{gap main} for $3$-dimensional isolated hyperquotient extremely non-canonical singularities, using the method from classification of $3$-dimensional terminal singularities. In Section \ref{section general case}, we  prove Theorem \ref{gap main} for the general case. In Section \ref{section bounded}, we prove Theorem \ref{main 3} and Corollary \ref{main 2}.

\section{Preliminaries}\label{prelim}
We adopt the standard notation and definitions in \cite{KMM} 
and \cite{KM}, and will freely use them. We work over $\bC$.

\subsection{Residues of integers}
For a positive integer $r$, $(\overline{n})_r$ denotes the smallest non-negative residue modulo $r$, i.e., the number $m$ such that $0\leq m<r$ and $n\equiv m \bmod r$. Usually $r$ is clear in the context, so we simply write $\overline{n}$ instead of $(\overline{n})_r$.
We will often use the following easy fact:
 $\overline{n}+\overline{-n}=\begin{cases}r & \text{ if } n\not\equiv 0 \bmod r;\\
 0 & \text{ if } n\equiv 0 \bmod r.
 \end{cases}$
 
 %$ if $n\not\equiv 0 \bmod r$, and $\overline{n}+\overline{-n}=0$ if $n\equiv 0 \bmod r$.

\subsection{Pairs, singularities, and minimal log discrepancies}
A {\it log pair} $(X, B)$ consists of a normal quasi-projective variety $X$ 
and an effective $\bR$-divisor $B$ on $X$ such that $K_X+B$ is $\bR$-Cartier.

Let $f\colon Y\rightarrow X$ be a log
resolution of the log pair $(X, B)$. Write
\[
K_Y =f^*(K_X+B)+\sum a_iF_i,
\]
where $\{F_i\}$ are distinct prime divisors. 
For a non-negative real number $\epsilon$, the log pair $(X,B)$ is called
\begin{itemize}
\item[(a)] \emph{kawamata log terminal} (\emph{klt}
for short) if $a_i> -1$ for all $i$;
\item[(b)] \emph{$\epsilon$-log canonical} (\emph{$\epsilon$-lc} for
short) if $a_i\geq -1+\epsilon$ for all $i$;
\item[(c)] \emph{terminal} if $a_i> 0$ for all $f$-exceptional divisors $F_i$ and all $f$;
\item[(d)] \emph{canonical} if $a_i\geq 0$ for all $f$-exceptional divisors $F_i$ and all $f$;
\item[(e)] \emph{purely log terminal} (\emph{plt}
for short) if $a_i\geq 0$ for all $f$-exceptional divisors $F_i$ and all $f$.
\end{itemize}

Usually we write $X$ instead of $(X,0)$ in the case when $B=0$. In this case, when we talk  about singularities as above, we automatically assume that $X$ is $\bQ$-Gorenstein, that is, $K_X$ is $\bQ$-Cartier.
%Note that $0$-klt (resp., $0$-lc) is just klt (resp., lc) in the usual sense. 
Note that we usually use lc instead of $0$-lc.
Also note that 
$\epsilon$-lc singularities only make sense if $\epsilon\in [0,1]$. %, and $\epsilon$-klt singularities only make sense if $\epsilon\in [0,1)$,
 
The {\it log discrepancy} of the divisor $F_i$ is defined to be 
$$a(F_i; X, B)=\mult_{F_i}(K_Y-f^*(K_X+B))+1= a_i+1.$$
It does not depend on the choice of the log resolution $f$. Here we identify divisors on different birational models by its divisorial valuation.
When $B=0$, we simply write $a(F_i; X)$ instead of $a(F_i; X, B)$.

%$F_i$ is called a {\it non-lc place} of $(X, B)$ if $a_i< -1$.
%A subvariety $V\subset X$ is called a {\it non-lc center} of 
%$(X, B)$ if it is the image of a non-lc place. 
%The {\it non-lc locus} $\text{Nlc}(X, B)$ is the union of 
%all non-lc centers of $(X, B)$.

Let $(X, B)$ be a log pair and $Z \subset X$ an irreducible closed subset with $\eta_Z$ the generic point of $Z$. 
The {\it minimal log discrepancy} of $(X, B)$ over $Z$ is defined as
\[
\mathrm{mld}_Z(X, B)=\inf_E\{a(E;X,B)\mid \text{center}_{X}(E)\subset Z\},\]
and the {\it minimal log discrepancy} of $(X, B)$ at $\eta_Z$ is defined as
\[
\mathrm{mld}_{\eta_Z}(X, B)=\inf_E\{a(E;X,B)\mid \text{center}_{X}(E)= Z\}.\]
Moreover, we write $\mathrm{ld}(X, B)$ instead of $\mathrm{mld}_X(X, B)$, and call it the {\it total log discrepancy} of $(X, B)$.
We define the  the {\it minimal log discrepancy} of $(X, B)$ to be $\mathrm{mld}(X, B)=\inf_Z\mathrm{mld}_{Z}(X, B)$ where $Z$ runs over all subvarieties of codimension $2$. Note that the difference between  total log discrepancy and minimal log discrepancy is just whether codimension $1$ points (or prime divisors) on $X$ are considered or not.  If $B=0$, we simply write $\mathrm{ld}(X)$ and $\mathrm{mld}(X)$.
Note that if $\mathrm{mld}(X)\leq 1$, then $\mathrm{ld}(X)=\mathrm{mld}(X)$.

Note that $\mathrm{ld}(X, B)\geq \epsilon$ (resp. $>0$) if and only if $(X, B)$ is $\epsilon$-lc (resp. klt), and 
$\mathrm{mld}(X, B)\geq 1$ (resp. $>0$)  if and only if $(X, B)$ is canonical  (resp. terminal). So $X$ is {\it non-canonical} if and only if
$\mathrm{mld}(X)< 1$.

\subsection{Log Calabi--Yau pairs}

A normal projective variety $X$ is a {\it Calabi--Yau} variety if $K_X\equiv 0$. If $K_X\sim_\bQ 0$, then the {\it global index} of $X$ is the minimal positive integer $m$ such that $mK_X\sim 0$.

A log pair $(X, B)$ is called a
\emph{log Calabi--Yau pair} if $X$ is projective and $K_X+B\equiv 0$. 
Recall that if $(X, B)$ is lc, this is equivalent 
to $K_X+B\sim_\bR 0$ by \cite{G}.

\subsection{Bounded pairs}\label{sec.bdd}
A collection of projective varieties $ \mathcal{D}$ is
said to be \emph{bounded} (resp., \emph{bounded in codimension one})
if there exists  a projective morphism 
$h\colon \mathcal{Z}\rightarrow S$
between schemes of finite type such that
each $X\in \mathcal{D}$ is isomorphic (resp., isomorphic in codimension one) to $\mathcal{Z}_s$ 
for some closed point $s\in S$  where $\mathcal{Z}_s=h^{-1}(s)$.

We say that a collection of projective log pairs $\mathcal{D}$ is \emph{log bounded} (resp., \emph{log bounded in codimension one})
if there is a quasi-projective scheme $\mathcal{Z}$, a 
reduced divisor $\mathcal{E}$ on $\mathcal Z$, and a 
projective morphism $h\colon \mathcal{Z}\to S$, where 
$S$ is of finite type and $\mathcal{E}$ does not contain 
any fiber, such that for every $(X,B)\in \mathcal{D}$, 
there is a closed point $s \in S$ and a birational
map $f \colon \mathcal{Z}_s \dashrightarrow X$ which is isomorphic
(resp., isomorphic in codimension one)
such that $\mathcal{E}_s:=\mathcal{E}|_{\mathcal{Z}_s}$ 
coincides with the support of $f_*^{-1}B$.

Moreover, if $\mathcal{D}$ is a set of klt Calabi--Yau 
varieties (resp., klt log Calabi--Yau pairs), then it is 
said to be {\it bounded modulo flops} (resp., {\it log 
bounded modulo flops}) if it is (log) bounded in 
codimension one, each fiber $\mathcal{Z}_{s}$ 
corresponding to $X$ in the definition is normal projective, 
and $K_{\mathcal{Z}_s}$ is $\bQ$-Cartier (resp., $K_{\mathcal{Z}_s}+f_*^{-1}B$ is $\bR$-Cartier). 

Note that if $\mathcal{D}$ is a set of klt log Calabi--Yau 
pairs which is log bounded modulo flops, and 
$(X, B)\in \mathcal{D}$ with a birational
map $f \colon \mathcal{Z}_s \dashrightarrow X$ 
isomorphic in codimension one as in the definition, 
then $(\mathcal{Z}_s, f_*^{-1}B)$ is also a klt log 
Calabi--Yau pair by the negativity lemma. 
Moreover, $(X, B)$ is $\epsilon$-lc if and only 
if $(\mathcal{Z}_s, f_*^{-1}B)$ is $\epsilon$-lc. A similar statement 
holds when $\mathcal{D}$ is a set of klt Calabi--Yau varieties.

Here the name ``modulo flops" comes from the fact that, if we assume that $X$ and $\mathcal{Z}_s$ are both $\bQ$-factorial, then they are connected by flops by running a $(K_X+B+\delta f_*H)$-MMP where $H$ is an ample divisor on $\mathcal{Z}_s$ and $\delta$ is a sufficiently small positive number (\cite{BCHM, flops}).

\subsection{Extremely non-canonical singularities}
As we are interested in non-canonical singularities, we introduce the concept of extremely non-canonical singularities, which are the closest to  terminal singularities among all non-canonical singularities.
\begin{definition}
Let $X$ be a normal quasi-projective variety.
We say that $X$ is {\em extremely non-canonical} if $X$ has $\bQ$-factorial klt singularities and
\begin{enumerate}
\item there exists exactly one prime divisor $E_0$ over $X$ such that $a(E_0; X)<1$;
\item there is no divisor $E$ over $X$ with $a(E; X)=1$.
\end{enumerate}
\end{definition}

\begin{remark}
Suppose that $X$ is extremely non-canonical, then it is easy to see that $a(E_0; X)=\mld(X)$, and $X$ has terminal singularities outside the center of $E_0$ on $X$. 
\end{remark}

\subsection{Cyclic quotient singularities and hyperquotient singularities}\label{subsec cyclic quotient} 
We recall the concept of hyperquotient singularities and the toric method which are useful in the classification of $3$-dimensional terminal singularities. Most of the contents come from \cite[Section 4]{YPG} except for Theorem \ref{key thm}.

Let $r$ be a positive integer. Let $\bmu_r$ denote the cyclic group of $r$-th roots of unity in $\bC$. A {\em cyclic quotient singularity} is of the form $\mathbb{A}^{n+1}/\bmu_r$, where the action of $\bmu_r$ is given by 
\[
\bmu_r\ni \xi: (x_0, \ldots, x_n)\mapsto (\xi^{a_0}x_0, \ldots,\xi^{a_n} x_n)
\]
for certain $a_0, \ldots, a_n\in \mathbb{Z}/r$. Note that we may always assume that the action of $\bmu_r$ on $\mathbb{A}^{n+1}$ is small, that is, it contains no reflection (\cite[Definition 7.4.6, Theorem 7.4.8]{Ishii}). We say that $\mathbb{A}^{n+1}/\bmu_r$ is of {\em type $\frac{1}{r}(a_0,\ldots, a_n)$}. Recall that this singularity is isolated if and only if $\gcd(a_i, r)=1$ for every $0\leq i\leq n$ by \cite[Remark 1]{Fujiki}.

The toric geometry interpretation of cyclic quotient singularities is as following (\cite[(4.3)]{YPG}):
let $\overline{M}\simeq \ZZ^{n+1}$ be the lattice of monomials on $\bA^{n+1}$, and $\overline{N}$ its dual. Define $N$ by $N=\overline{N}+\ZZ\cdot \frac{1}{r}(a_0, \ldots, a_n)$ and $M\subset \overline{M}$ the dual sublattice. Let $\sigma=\bR_{\geq 0}^{n+1}\subset N_\bR$ be the positive quadrant and $\sigma^\vee \subset M_\bR$ the dual quadrant. Then in toric geometry, 
\[
\bA^{n+1}=\text{Spec}~\bC[\overline{M}\cap \sigma^\vee]
\]
and its quotient
\[
\mathbb{A}^{n+1}/\bmu_r=\text{Spec}~\bC[{M}\cap \sigma^\vee]=T_N(\Delta),
\]
where $\Delta$ is the fan corresponding to $\sigma$.

Now we are interested in the hypersurface singularity $(Q\in Y): (f=0) \subset \bA^{n+1}$ with an action of $\bmu_r$ which is free outside $Q$ and its quotient $(P\in X)=Y/\bmu_r$. It is known that the action of $\bmu_r$ extends to a small $\bmu_r$-action of $\bA^{n+1}$ (\cite[Lemma 8.3.8]{Ishii}). We still assume that the action of $\bmu_r$ on $\bA^{n+1}$ is given by 
\[
\bmu_r\ni \xi: (x_0, \ldots, x_n)\mapsto (\xi^{a_0}x_0, \ldots,\xi^{a_n} x_n).
\]
As $Y=(f=0)$ is fixed by the action of $\bmu_r$, we may write 
\[
\bmu_r\ni \xi: f\mapsto \xi^{e}f
\]
for certain $e\in \mathbb{Z}/r$. Such $(P\in X)$ is called a {\em hyperquotient singularity of type $\frac{1}{r}(a_0, \ldots, a_n; e)$}. Note that the action of $\bmu_r$ on the generator 
\[
s=\text{Res}_Y\frac{dx_0\wedge \cdots \wedge dx_n}{f}=\frac{dx_1\wedge \cdots \wedge dx_n}{\partial f/\partial x_0}\in \omega_Y
\]
is given by 
\[
\bmu_r\ni \xi: s \mapsto \xi^{a_0+\cdots +a_n-e}s.
\]

Let $\alpha=(b_0, \ldots, b_n)\in N\cap \sigma$ be a vector, that is, $\alpha$ is a weighting such that $\alpha(x_i)=b_i\in \bQ$ on monomials such that 
\begin{enumerate}
\item $\alpha\in N$, that is, $\alpha\equiv \frac{1}{r}(ja_0, \ldots, ja_n) \bmod \ZZ^{n+1}$ for some $j=0,1,\ldots, r-1$;
\item $\alpha\in \sigma$, that is, $b_i\geq 0$ for all $i$.
\end{enumerate}
This weighting can be extended to $\bC[x_0, \ldots, x_n]$ in the following way:
for $\xx^{\bf m}=x_0^{m_0}\cdots x_n^{m_n}$, $\alpha(\xx^{\bf m})=\sum_{i=0}^n m_i\alpha(x_i)=\sum_{i=0}^n m_ib_i$; and for a polynomial $f\in \bC[x_0, \ldots, x_n]$,
\[
\alpha(f):=\min\{\alpha(\xx^{\bf m})\mid \xx^{\bf m}\in f\}.
\] 
Here $\xx^{\bf m}\in f$ means that the monomial $\xx^{\bf m}$ appears in $f$ with non-zero coefficient.

\begin{prop}\label{blow up a} Consider $(Q\in Y): (f=0) \subset \bA^{n+1}$ with an action of $\bmu_r$ which is free outside $Q$ and its quotient $(P\in X)=Y/\bmu_r$. Keep the above notation. Let $\alpha\in N\cap \sigma$ be a primitive vector and $\Delta(\alpha)$ be the star-shaped subdivision of $\Delta$ by $\alpha$, then the toric morphism $\phi_{\alpha}: T_N(\Delta(\alpha))\to T_N(\Delta)=\mathbb{A}^{n+1}/\bmu_r$ extracts an exceptional divisor $E_{\alpha}$. Denote $Z_\alpha=T_N(\Delta(\alpha))$ and $Z=T_N(\Delta)$, and let $X_\alpha\subset Z_\alpha$ be the strict transform of $X$ on $Z_\alpha$. Then 
\begin{enumerate}
\item $K_{Z_\alpha}=\phi_{\alpha}^*K_Z+(\alpha(x_0\cdots x_n)-1)E_\alpha$;
\item $X_\alpha=\phi_{\alpha}^*X-\alpha(f)E_\alpha$.
\end{enumerate}
\end{prop}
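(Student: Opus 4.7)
The plan is to establish both formulas by direct toric computation on $Z=\bA^{n+1}/\bmu_r$, tracking carefully the $\bQ$-Cartier structure. The first reduction is to note that under the smallness of the $\bmu_r$-action each standard basis vector $e_i$ is primitive in $N$: if $e_i/m\in N$ for some integer $m\geq 1$, writing $e_i/m=\overline{v}+\frac{j}{r}(a_0,\ldots,a_n)$ with $\overline{v}\in\overline{N}$ and $0\leq j<r$, the coordinates $k\neq i$ give $ja_k\equiv 0\bmod r$ for all such $k$, and smallness then forces $j=0$ and $m=1$. Thus the rays of $\sigma$ with respect to $N$ have primitive generators $e_0,\ldots,e_n$, with corresponding torus-invariant prime divisors $D_0,\ldots,D_n$ on $Z$, while the star subdivision by $\alpha=(b_0,\ldots,b_n)$ introduces the unique new torus-invariant prime divisor $E_\alpha$ on $Z_\alpha$.

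For part (1) I would apply the toric canonical formulas $K_Z=-\sum_{i=0}^n D_i$ and $K_{Z_\alpha}=-\sum_{i=0}^n D_i'-E_\alpha$, where $D_i'$ denotes the strict transform of $D_i$. Each $D_i$ is only $\bQ$-Cartier, but $rD_i$ is principal: the monomial $x_i^r$ is $\bmu_r$-invariant and so lies in $\bC[M]$, and the toric principal-divisor formula gives $\operatorname{div}_Z(x_i^r)=rD_i$ and $\operatorname{div}_{Z_\alpha}(x_i^r)=rD_i'+rb_i E_\alpha$. Dividing by $r$ produces $\phi_\alpha^*D_i=D_i'+b_i E_\alpha=D_i'+\alpha(x_i)E_\alpha$, and summing over $i$ yields the discrepancy $\sum_i b_i-1=\alpha(x_0\cdots x_n)-1$ along $E_\alpha$.

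For part (2) the key identity is $rX=\operatorname{div}_Z(f^r)$ as Weil divisors on $Z$. Indeed, since the $\bmu_r$-action is free outside the origin, the quotient map $\pi\colon\bA^{n+1}\to Z$ is \'etale in codimension one, hence $\pi^*X=Y=\operatorname{div}_{\bA^{n+1}}(f)$; also $f^r$ has trivial weight $re\equiv 0$, so $f^r\in\bC[M]$, and $\pi^*\operatorname{div}_Z(f^r)=\operatorname{div}_{\bA^{n+1}}(f^r)=rY$ forces $\operatorname{div}_Z(f^r)=rX$. Writing $\phi_\alpha^*X=X_\alpha+cE_\alpha$, the $E_\alpha$-coefficient of $r\phi_\alpha^*X=\operatorname{div}_{Z_\alpha}(f^r)$ equals the $\alpha$-valuation of $f^r$, which is the minimum $\alpha$-weight over monomials of $f^r$, namely $r\alpha(f)$. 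Hence $c=\alpha(f)$, giving $X_\alpha=\phi_\alpha^*X-\alpha(f)E_\alpha$. The main obstacle I anticipate is simply the lattice bookkeeping—primitivity of the $e_i$ in $N$ and the correct handling of $X$ as a Weil $\bQ$-Cartier divisor on $Z$—after which both equalities follow from the universal principal-divisor formula $\operatorname{div}(\chi^m)=\sum_\rho\langle m,v_\rho\rangle D_\rho$ on a toric variety.
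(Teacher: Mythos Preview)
Your proof is correct and complete. The paper itself does not give an argument here at all: it simply says ``This is standard, see \cite[(4.8)]{YPG} or \cite[Proposition 8.3.11]{Ishii}'' and moves on. What you have written is precisely the standard toric computation underlying those references---tracking $\bQ$-Cartier divisors via $r$-th powers so that one may apply the principal-divisor formula $\operatorname{div}(\chi^m)=\sum_\rho\langle m,v_\rho\rangle D_\rho$---so there is no meaningful difference in approach, only in level of detail.

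One small remark on the primitivity step: from $ja_k\equiv 0\bmod r$ for all $k\neq i$, smallness does not literally force $j=0$; it forces $ja_i\equiv 0\bmod r$ as well (otherwise $\xi^j$ would be a pseudo-reflection), so that $\frac{j}{r}(a_0,\ldots,a_n)\in\overline{N}$ and hence $e_i/m\in\overline{N}$, giving $m=1$. The conclusion is the same.
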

\begin{proof}This is standard, see \cite[(4.8)]{YPG} or \cite[Proposition 8.3.11]{Ishii}.
\end{proof}

In the classification of $3$-dimensional terminal singularities, Proposition \ref{blow up a} is used to provide a necessary condition for a hyperquotient singularity being terminal (see \cite[(4.6) Theorem]{YPG}). As we are considering non-canonical singularities, in the following theorem   we provide  a necessary condition for an isolated hyperquotient singularity being extremely non-canonical  by the toric method, which plays an essential role in the proof in Section \ref{section HQ}.

\begin{thm}\label{key thm} Fix $0<\delta\leq \frac{1}{2}$. Consider $(Q\in Y): (f=0) \subset \bA^{n+1}$ with an action of $\bmu_r$ which is free outside $Q$ and its quotient $(P\in X)=Y/\bmu_r$. 
Assume further that $(P\in X)$ is an isolated extremely non-canonical singularity with $\mld(X)>1-\delta$. Keep the above notation. Then 
\begin{enumerate}
\item there exists at most one primitive vector $\beta\in N\cap \sigma$ such that 
\[
1-\delta< \beta(x_0\cdots x_n)-\beta(f)<1;
\]
\item for any primitive vector $\alpha\in N\cap \sigma$ such that $\alpha\neq \beta$,
\[
 \alpha(x_0\cdots x_n)-\alpha(f)>1.
\]
\end{enumerate}
Furthermore, for any vector $\alpha'\in N\cap \sigma$ such that $\alpha'\neq \beta$,
\[
 \alpha'(x_0\cdots x_n)-\alpha'(f)>1.
\]
\end{thm}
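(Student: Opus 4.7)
The plan is to apply Proposition \ref{blow up a} to every primitive $\alpha \in N \cap \sigma$ producing a nontrivial exceptional divisor and to translate the toric formulas into log discrepancies on $X$ via adjunction. Since $X\subset Z$ is Cartier, adjunction $K_{X_\alpha}=(K_{Z_\alpha}+X_\alpha)|_{X_\alpha}$ together with Proposition \ref{blow up a} gives
\[
K_{X_\alpha} = (\phi_\alpha|_{X_\alpha})^{*}K_X + \bigl(\alpha(x_0\cdots x_n)-\alpha(f)-1\bigr)\, E_\alpha|_{X_\alpha}.
\]
Decomposing $E_\alpha|_{X_\alpha}=\sum m_i F_i$ into prime components with $m_i\geq 1$ (each of which is a prime divisor over $X$), one reads off
\[
a(F_i;X) = 1 + m_i\bigl(\alpha(x_0\cdots x_n)-\alpha(f)-1\bigr).
\]

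Next I would invoke the extremely non-canonical hypothesis. No divisor over $X$ has log discrepancy equal to $1$, so the common factor $\alpha(x_0\cdots x_n)-\alpha(f)-1$ is nonzero and has a definite sign; all the $a(F_i;X)$ therefore lie strictly on the same side of $1$. Because $E_0$ is the unique prime divisor with $a(E_0;X)<1$ and satisfies $a(E_0;X)=\mld(X)>1-\delta$, for every primitive $\alpha$ with $E_\alpha|_{X_\alpha}\neq 0$ one of the following holds:
\begin{itemize}
\item[(a)] $\alpha(x_0\cdots x_n)-\alpha(f)>1$ and every $a(F_i;X)>1$; or
\item[(b)] $1-\delta<\alpha(x_0\cdots x_n)-\alpha(f)<1$, $E_\alpha|_{X_\alpha}$ is a single prime component equal to $E_0$ (since $E_0$ is the only divisor with $a<1$), and the bound $a(E_0;X)>1-\delta$ together with the multiplicity $m_1\geq 1$ yields $\alpha(x_0\cdots x_n)-\alpha(f)>1-\delta/m_1\geq 1-\delta$.
\end{itemize}

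The main obstacle is the uniqueness statement in (1). If two primitive vectors $\beta,\beta'$ both fell into case (b), both would determine the divisorial valuation of $E_0$ on $K(X)$. I would argue $\beta=\beta'$ by showing that $v_\alpha|_{\bC[X]}$ recovers $\alpha$: evaluated on $\bmu_r$-invariant monomials such as $x_i^{r}\in\bC[X]$, $v_\alpha$ returns $m_1\,\alpha(x_i^{r})=m_1 r\,\alpha(x_i)$, so $v_\alpha$ determines the ratios $\alpha(x_0):\cdots:\alpha(x_n)$, and the primitivity of $\alpha$ in $N$ pins $\alpha$ down uniquely. Granted (1), claim (2) then follows, since a primitive $\alpha\neq\beta$ in case (b) would violate this uniqueness; hence such an $\alpha$ must be in case (a) and $\alpha(x_0\cdots x_n)-\alpha(f)>1$.

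For any $\alpha'\in N\cap\sigma\setminus\{\beta\}$, write $\alpha'=k\alpha$ with $\alpha$ primitive and $k\in\ZZ_{>0}$, so $\alpha'(x_0\cdots x_n)-\alpha'(f)=k\bigl(\alpha(x_0\cdots x_n)-\alpha(f)\bigr)$. If $\alpha\neq\beta$, then this is $>k\geq 1$ by (2). If $\alpha=\beta$, then $k\geq 2$, and using $\beta(x_0\cdots x_n)-\beta(f)>1-\delta$ together with $\delta\leq\tfrac12$ gives $\alpha'(x_0\cdots x_n)-\alpha'(f)>k(1-\delta)\geq 2(1-\delta)\geq 1$, still strictly, as required.
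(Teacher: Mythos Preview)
Your overall strategy matches the paper's: pull the toric formulas of Proposition~\ref{blow up a} back to discrepancies on $X$ and exploit the extremely non-canonical hypothesis; the uniqueness argument via the valuation of $E_0$ on monomials and the treatment of non-primitive vectors are essentially the same as in the paper. The gap is in the adjunction step. The identity $K_{X_\alpha}=(K_{Z_\alpha}+X_\alpha)|_{X_\alpha}$ is naive adjunction and is not valid here: $Z_\alpha$ is a singular toric variety, $E_\alpha$ is typically not Cartier, and $X_\alpha\cap E_\alpha$ can meet $\Sing(Z_\alpha)$, so a different appears. (Nor is $X\subset Z$ Cartier in general, since $f$ is only $\bmu_r$-semi-invariant of weight $e$; one only has $X$ $\bQ$-Cartier.) Consequently your formula $a(F_i;X)=1+m_i\bigl(\alpha(x_0\cdots x_n)-\alpha(f)-1\bigr)$ with $m_i\in\ZZ_{\geq 1}$ is unjustified, and the dichotomy (a)/(b) does not follow from it.

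The paper repairs this as follows. Writing $t=1+\beta(f)-\beta(x_0\cdots x_n)\geq 0$, one has $K_{Z_\beta}+X_\beta+tE_\beta=\phi_\beta^*(K_Z+X)$; since $X$ has an isolated klt singularity, inversion of adjunction makes $(Z,X)$ plt, hence so is the crepant pullback $(Z_\beta,X_\beta+tE_\beta)$. Subadjunction for plt pairs (\cite[16.6, 16.7]{Kollar92}) then produces a boundary $B_\beta$ on $X_\beta$ with $K_{X_\beta}+B_\beta=(\phi_\beta|_{X_\beta})^*K_X$, whose coefficients have the standard form $1-\frac{1}{l}+\frac{kt}{l}$ with $l,k\in\ZZ_{\geq 1}$. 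The hypothesis $\mld(X)>1-\delta\geq\frac{1}{2}$ forces each such coefficient to be $<\frac{1}{2}$, whence $l=1$; only then does one obtain $E_\beta|_{X_\beta}=kF_\beta$ irreducible and recover your formula (with $m_i=k$). This step is exactly where the assumption $\delta\leq\frac{1}{2}$ is used in the main argument, not only in the non-primitive case.
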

\begin{proof}
Assume that there exists a primitive vector $\beta\in N\cap \sigma$ such that $\beta(x_0\cdots x_n)-\beta(f)\leq 1$. To see the first two statements, it suffices to show that such $\beta$ is unique and 
\[
1-\delta<\beta(x_0\cdots x_n)-\beta(f)<1.
\] 

Keep the notation in Proposition \ref{blow up a}, we have 
\[
K_{Z_\beta}+X_{\beta}=\phi_\beta^*(K_Z+X)+(\beta(x_0\cdots x_n)-\beta(f)-1)E_\beta,
\] 
which can be rewritten as 
\[
K_{Z_\beta}+X_{\beta}+tE_\beta=\phi_\beta^*(K_Z+X),
\]
where $t=1+\beta(f)-\beta(x_0\cdots x_n)\geq 0$.
Since $X$ has an isolated klt singularity, the pair $(Z, X)$ is plt by inversion of adjunction, which implies that $({Z_\beta}, X_{\beta}+tE_\beta)$ is also plt. By the subadjunction formula (\cite[16.6 Proposition, 16.7 Corollary]{Kollar92}), there is a boundary $B_\beta$ on $X_{\beta}$ such that 
\[
K_{X_\beta}+B_\beta=(K_{Z_\beta}+X_{\beta}+tE_\beta)|_{X_{\beta}}=\phi_\beta|_{X_\beta}^*(K_X),
\]
and the coefficients of $B_\beta$ are of the form $1-\frac{1}{l}+\frac{kt}{l}$ for some positive integers $l, k$, here $k>0$ since $E_\beta$ intersects $X_\beta$. By the assumption that $X$ is extremely non-canonical, coefficients of $B_\beta$ are positive since there is no exceptional divisor over $X$ with log discrepancy $1$, and in fact $B_\beta$ has exactly one component $F_\beta$ with coefficient $1-\frac{1}{l}+\frac{kt}{l}>0$.
Since $\mld(X)>1-\delta$,  $1-\frac{1}{l}+\frac{kt}{l}<\delta\leq \frac{1}{2}$. In particular, $l=1$ and $0<t< \delta$. This shows that 
\[
1-\delta<\beta(x_0\cdots x_n)-\beta(f)<1.
\] 

To see the uniqueness of $\beta$, we look at the divisorial valuation $v_{F_\beta}$ on $\bC(X)$, and the following proof is suggested by Jungkai Chen. Since $l=1$, from the subadjunction formula, we get $E_\beta|_{X_{\beta}}=B_\beta=kF_\beta$. Hence $v_{F_\beta}(\xx^{\bf m})=k\beta(\xx^{\bf m})$ for any monomial $\xx^{\bf m}$ (${\bf m}\in M$). By the assumption that $X$ is extremely non-canonical, $v_{F_\beta}$ is unique. Hence such $\beta$ is unique by the primitivity.

For the last statement, for any non-primitive vector $\alpha'\in N\cap \sigma$, we may write  $\alpha'=m\alpha$ where $m\geq 2$ is an integer and $\alpha\in N\cap \sigma$ is primitive. Then \[
\alpha'(x_0\cdots x_n)-\alpha'(f)=m( \alpha(x_0\cdots x_n)-\alpha(f))> 2(1-\delta)\geq 1.
\]
\end{proof}

\subsection{ACC for minimal log discrepancies of cyclic quotient singularities}\label{subsection delta35}

Recall that the ACC for minimal log discrepancies is proved for toric varieties \cite{Borisov, Ambro}, in this paper we only need the following special case for cyclic quotient singularities:
\begin{thm}[{\cite{Borisov}}] Conjecture \ref{c.mld.strong} holds for cyclic quotient singularities. In particular, fix a positive integer $d$,
then the set of minimal log discrepancies of $d$-dimensional cyclic quotient singularities $(0\in W)$ at $0$ satisfies the ascending chain condition.
\end{thm}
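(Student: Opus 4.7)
The plan is to translate the statement into toric combinatorics and derive a contradiction from a hypothetical strictly ascending sequence. By the dictionary in Subsection \ref{subsec cyclic quotient}, for a $d$-dimensional cyclic quotient singularity $W=\bA^d/\bmu_r$ of type $\frac{1}{r}(a_1,\ldots,a_d)$, a divisorial valuation over $W$ with center $\{0\}$ corresponds to a primitive vector $v\in N\cap\mathrm{int}(\sigma)$, where $N=\overline{N}+\mathbb{Z}\cdot\frac{1}{r}(a_1,\ldots,a_d)$ and $\sigma=\mathbb{R}_{\geq 0}^d$; its log discrepancy, by Proposition \ref{blow up a}(1), equals $\ell(v):=v_1+\cdots+v_d$. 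Hence
\[
\mld_0(W)=\min\{\ell(v)\mid v\in N\cap\mathrm{int}(\sigma),\ v\neq 0\},
\]
or equivalently $\min_j\frac{1}{r}\sum_i\overline{ja_i}$ taken over those $j\in\{1,\ldots,r-1\}$ with $\overline{ja_i}>0$ for all $i$.

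Suppose for contradiction there is a strictly ascending sequence $m_k:=\mld_0(W_k)\nearrow L$. The first dichotomy is whether the denominators $\{r_k\}$ are bounded. If so, there are only finitely many isomorphism types of $W_k$ (the parameters $a_{k,i}$ lie in a finite set modulo $r_k$, and quotienting by the permutation and unit-group actions leaves only finitely many classes), so $\{m_k\}$ is a finite set and cannot strictly ascend. Otherwise $r_k\to\infty$; pick $v_k\in N_k\cap\mathrm{int}(\sigma)\cap[0,1)^d$ realizing $m_k$, and after passing to a subsequence assume $v_k\to v^*\in[0,1]^d$ with $\ell(v^*)=L$. Similarly the generators $\xi_k=\frac{1}{r_k}(a_{k,1},\ldots,a_{k,d})$ converge in the torus $\mathbb{R}^d/\mathbb{Z}^d$ to some $\xi^*$, and the cyclic subgroups $\langle\xi_k\rangle$ accumulate to a closed subtorus $T^*\ni v^*$.

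The core step is to produce, for $k\gg 0$, a lattice vector $w_k\in N_k\cap\mathrm{int}(\sigma)$ with $\ell(w_k)<m_k$, contradicting the minimality of $v_k$. When $v^*\in\mathrm{int}(\sigma)$ and $\dim T^*\geq 1$, one perturbs $v^*$ inside $T^*$ along a direction decreasing $\ell$ and approximates by elements of $\langle\xi_k\rangle$, which become dense in $T^*$ as $k\to\infty$. When $v^*\in\partial\sigma$, restricting to the corresponding face reduces to a lower-dimensional cyclic quotient singularity, and one concludes by induction on $d$ (the base $d=1$ being trivial since $\bA^1/\bmu_r$ is smooth).

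The hard part is the quantitative control of how closely $\langle\xi_k\rangle$ approximates $T^*$ relative to the gap $L-m_k$, especially in the degenerate case $\dim T^*=0$ where $\xi^*$ has finite order and the subgroups $\langle\xi_k\rangle$ need not be dense anywhere. A cleaner resolution, closer to Borisov's original strategy, is to invoke the finiteness (up to $GL_d(\mathbb{Z})$-equivalence) of $d$-dimensional lattice simplices whose interior contains only the origin; this directly bounds the set of possible mld values below any fixed threshold and so forces ACC.
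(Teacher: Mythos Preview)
The paper does not give its own proof of this statement: it is quoted as a theorem of Borisov \cite{Borisov} (with Ambro \cite{Ambro} for the toric case more generally) and used as a black box to extract Corollaries~\ref{acc quot 3} and~\ref{acc quot 5}. So there is nothing in the paper to compare your argument against.

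As for the proposal itself, it is not a proof but an outline that you yourself recognize as incomplete. The translation to $\mld_0(W)=\min\{\ell(v):v\in N\cap\mathrm{int}(\sigma)\}$ is correct, and the bounded-denominator case is fine. The unbounded case, however, has genuine gaps. First, the assertion that the cyclic groups $\langle\xi_k\rangle\subset(\bR/\ZZ)^d$ ``accumulate to a closed subtorus $T^*\ni v^*$'' is neither precisely formulated nor justified: a sequence of cyclic groups of growing order need not approximate any fixed positive-dimensional subtorus, and in general there is no reason $v^*$ should lie in whatever limit object you have in mind. Second, even granting a limiting subtorus, the perturbation step requires producing $w_k\in N_k\cap\mathrm{int}(\sigma)$ with $\ell(w_k)<m_k$; this needs the approximation error of $\langle\xi_k\rangle$ to $T^*$ to be small \emph{relative to} $L-m_k$, and you give no mechanism for that. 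You explicitly flag the $\dim T^*=0$ case as unresolved. Finally, your last paragraph abandons the compactness approach and points to finiteness of lattice simplices with a single interior lattice point---which is indeed the circle of ideas Borisov uses---but you do not carry it out. In short, the proposal correctly identifies the combinatorial reformulation and the main obstacles, but does not overcome them; if you want a self-contained proof, you should either follow Borisov's lattice-geometry argument directly or cite the result as the paper does.
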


As  corollaries, $2$ and $1$ are not accumulation points of these sets from below, and we will only use this fact in dimensions $3$ and $5$.
\begin{cor}\label{acc quot 3}
 There exists a positive constant $\delta_3>0$ such that for any isolated cyclic quotient singularity $(0\in W)$ in dimension $3$, if $\mld_0(W)<1$, then $\mld_0(W)\leq 1-\delta_3$.
\end{cor}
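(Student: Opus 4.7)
\textbf{Proof proposal for Corollary \ref{acc quot 3}.}

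The plan is to deduce the corollary formally from the ACC statement of the preceding theorem. Let
\[
S \;=\; \bigl\{\, \mld_0(W) \;\big|\; (0\in W) \text{ is an isolated cyclic quotient $3$-fold singularity}\,\bigr\} \subset \bR_{\geq 0}.
\]
By the theorem of Borisov just quoted (the $d=3$ case of Conjecture \ref{c.mld.strong} for cyclic quotient singularities), the set $S$ satisfies the ascending chain condition. The assertion we must prove is that $\sup (S \cap [0,1)) < 1$.

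The key step is the standard translation of ACC into the existence of maxima. Consider the subset $S' := S \cap [0,1)$. If $S' = \emptyset$ there is nothing to prove and any $\delta_3 \in (0,1]$ works; so assume $S' \neq \emptyset$. Because $S$ satisfies ACC, so does every subset of $S$, and in particular every non-empty subset of $S$ (being a subset of $\bR$) admits a maximum. Thus $S'$ has a maximum element $M < 1$, and we simply set
\[
\delta_3 \;=\; 1 - M \;>\; 0.
\]
By construction, any isolated cyclic quotient $3$-fold singularity $(0\in W)$ with $\mld_0(W) < 1$ satisfies $\mld_0(W) \in S'$, hence $\mld_0(W) \leq M = 1-\delta_3$, as required.

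There is no real obstacle here; the only thing to verify is the elementary fact that an ACC subset of $\bR$ has a maximum whenever it is non-empty. Indeed, if $S'$ had no maximum, then starting from any $s_1 \in S'$ one could inductively choose $s_{n+1} \in S'$ with $s_{n+1} > s_n$, producing a strictly ascending infinite chain in $S$ and contradicting ACC. So the whole content of the corollary is packaged into the previously stated theorem, and the proof reduces to this short formal argument.
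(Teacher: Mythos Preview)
Your proof is correct and matches the paper's approach: the paper does not spell out a proof but presents the corollary as an immediate consequence of the ACC theorem for cyclic quotient singularities, noting that ``$2$ and $1$ are not accumulation points of these sets from below.'' Your formal unpacking of this---that ACC forces $S\cap[0,1)$ to have a maximum $M<1$, whence $\delta_3=1-M$ works---is exactly the intended argument.
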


\begin{cor}\label{acc quot 5}
 There exists a positive constant $\delta_5>0$ such that for any cyclic quotient singularity $(0\in W)$ in dimension $5$, if $\mld_0(W)<2$, then $\mld_0(W)\leq 2-\delta_5$.
\end{cor}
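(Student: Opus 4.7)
The plan is to derive Corollary \ref{acc quot 5} as a direct formal consequence of the ACC theorem (Borisov's result) just cited. The argument is a standard extraction-of-subsequence argument from ACC, with no new ingredients. I would argue by contradiction: assume that $2$ \emph{is} an accumulation point from below of the set
\[
S \;=\; \{\mld_0(W) \mid (0\in W) \text{ is a $5$-dimensional cyclic quotient singularity}\}.
\]
Then there exists a sequence $W_1, W_2, \ldots$ of such singularities with $\mld_0(W_i) < 2$ for every $i$ and $\mld_0(W_i)\to 2$.

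Next I would extract a strictly increasing subsequence. Set $i_1=1$; having chosen $i_1<\cdots<i_k$ with strictly increasing values, the value $\mld_0(W_{i_k})$ is strictly less than $2$, and since $\mld_0(W_i)\to 2$ there are infinitely many indices $j>i_k$ with $\mld_0(W_j) > \mld_0(W_{i_k})$. Pick one as $i_{k+1}$. This produces an infinite strictly increasing sequence of values in $S$, which directly contradicts the ACC statement cited above. Therefore the supremum $s := \sup(S\cap [0,2))$ satisfies $s < 2$, and the constant $\delta_5 := 2 - s > 0$ has the required property: if $\mld_0(W) < 2$ for some $5$-dimensional cyclic quotient singularity, then $\mld_0(W) \leq s = 2-\delta_5$.

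There is no substantive obstacle in this derivation; all the mathematical content is buried in the ACC theorem for toric singularities, which has already been invoked. The only thing to check is that the logical passage from ``ACC for the set $S$'' to ``$2$ is not an accumulation point from below'' is clean, which is exactly the subsequence extraction above. The analogous argument at the value $1$ in dimension $3$ gives Corollary \ref{acc quot 3}, and could indeed be stated uniformly.
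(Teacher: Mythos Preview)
Your argument is correct and matches the paper's approach: the paper does not write out a proof but simply states Corollary~\ref{acc quot 5} as an immediate consequence of the ACC theorem for cyclic quotient singularities, which is exactly the subsequence-extraction argument you give.
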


Note that in Corollary \ref{acc quot 3} we are only interested in isolated singularities, but in Corollary 
 \ref{acc quot 5} the singularities are not necessarily isolated.

The following example is suggested by Alexeev:
\begin{example}\label{12/13}
Consider $(0\in W)$ to be a $3$-dimensional isolated cyclic quotient singularity of type $\frac{1}{13}(3,4,5)$, then $\mld(W)=\mld_0(W)=\frac{12}{13}$. In particular, $\delta_3\leq \frac{1}{13}$.
\end{example}
Here for the computation of minimal log discrepancies of toric varieties, we refer to \cite{Ambro} (see also the proof of Lemma \ref{non-canonical lemma}).

\begin{remark}
In fact, it is not difficult to show that Example \ref{12/13} is optimal, that is, we can take $\delta_3=\frac{1}{13}$ in Corollary \ref{acc quot 3}. This can be done after some tedious but elementary calculation by hand. We will not give the proof nor use this fact in this paper.
The value of $\delta_5$, on the other hand, seems to be more subtle as the dimension is higher and the singularities are not necessarily isolated. \end{remark}

\subsection{The terminal lemma and the non-canonical lemma}
In this subsection, we recall the terminal lemma by Morrison and Stevens \cite{MS} which plays an important role in the classification of $3$-dimensional terminal singularities. Here we only recall a special version for our application, for the full version we refer to \cite[(5.4) Theorem]{YPG}. Recall that $\overline{n}$ denotes the smallest non-negative residue modulo $r$.
\begin{thm}[{\cite[(5.4) Theorem, (5.6) Corollary]{YPG}}]\label{terminal lemma} Let $\frac{1}{r}(a_1, \cdots, a_4; e, 1)$ be a $6$-tuple of rational numbers with denominator $r$ such that 
$
q=\gcd(e, r)=\gcd(a_4, r)
$, and $a_1, a_2, a_3$ are coprime to $r$.
Assume that for $k=1, \ldots, r-1$,
\[
\sum_{i=1}^4\overline{a_ik}=\overline{ek}+k+r.
\]
If $q>1$, then $a_4\equiv e \bmod r$, and the remaining 4 elements can be paired together as $a_1\equiv 1, a_2+a_3\equiv 0 \bmod r$ (or permutations); if $q=1$, then $\{a_1, a_2, a_3, a_4,-e, -1\}$ can be split up into $3$ disjoint pairs which add to $0 \bmod r$ (for example, $a_1+a_2\equiv a_3+a_4\equiv -e-1\equiv 0 \bmod r$).
\end{thm}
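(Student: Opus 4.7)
The plan is to follow the character-sum strategy of Morrison--Stevens. First, I would symmetrize the hypothesis: using $\overline{-m}=r-\overline{m}$ for $r\nmid m$, the identity $\sum_{i=1}^4\overline{a_ik}=\overline{ek}+k+r$ rewrites as
\[
\overline{a_1k}+\overline{a_2k}+\overline{a_3k}+\overline{a_4k}+\overline{-ek}+\overline{-k}=3r
\]
for every $k\in\{1,\dots,r-1\}$ with $r\nmid ek$. Introducing the multiset $B=\{a_1,a_2,a_3,a_4,-e,-1\}$ in $\ZZ/r\ZZ$, the task reduces to showing that $B$ admits a partition into three pairs summing to $0\bmod r$, together with the extra rigidity claimed when $q>1$.

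Second, I would pass to characters. For each primitive $r$-th root of unity $\zeta$, multiply the symmetrized identity by $\zeta^k$ and sum over $k=1,\dots,r-1$. Using the elementary identity $\sum_{k=1}^{r-1}\overline{bk}\,\zeta^k=r/(\zeta^{b^{-1}}-1)$, valid when $\gcd(b,r)=1$, the hypothesis (in the case $q=1$) collapses into the rigid identity
\[
\sum_{b\in B}\frac{1}{\zeta^{b^{-1}}-1}=-3
\]
at every primitive $r$-th root of unity $\zeta$. A direct computation shows that each opposite pair $\{b,-b\}$ in $B$ contributes exactly $-1$ to the left-hand side, so three such disjoint pairs give total $-3$. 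The converse---that the identity holding at every primitive root of unity forces $B$ to split into three opposite pairs---follows by viewing the identity in the group ring $\ZZ[t]/(t^r-1)$ and inspecting pole orders at each root of unity.

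The case $q>1$ needs separate treatment of those $k$ with $(r/q)\mid k$: for these $k$ both $\overline{a_4k}$ and $\overline{ek}$ vanish, so the original identity degenerates to
\[
\overline{a_1k}+\overline{a_2k}+\overline{a_3k}=k+r.
\]
After rescaling $k=jr/q$, this becomes the condition that $\frac{1}{q}(a_1,a_2,a_3)$ defines a terminal cyclic quotient $3$-fold singularity with $\mld_0=(q+1)/q$, so by the classification of terminal cyclic $3$-fold quotients one has $\{a_1,a_2,a_3\}\equiv\{1,a,-a\}\bmod q$ for some $a$ coprime to $q$. Combining this additional constraint with the main character-sum analysis restricted to the non-degenerate $k$'s yields the sharper conclusion $a_4\equiv e$, $a_1\equiv 1$, and $a_2+a_3\equiv 0\bmod r$. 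The main obstacle is the boundary analysis: tracking how the apparent poles of $1/(\zeta^{b^{-1}}-1)$ at $\zeta=1$ and the degenerate values of $k$ interact is exactly what distinguishes the regimes $q=1$ and $q>1$, and keeping the bookkeeping clean here is the delicate step of the proof.
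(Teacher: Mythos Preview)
The paper does not supply its own proof of this statement: Theorem~\ref{terminal lemma} is quoted directly from Reid \cite[(5.4) Theorem, (5.6) Corollary]{YPG}, with the original argument due to Morrison--Stevens \cite{MS}, and the only commentary in the paper is the remark that the $q=1$ case, omitted from the statement of \cite[(5.6) Corollary]{YPG}, is easily derived from \cite[(5.4) Theorem]{YPG}. So there is no ``paper's own proof'' to compare against.

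That said, your sketch is exactly the Morrison--Stevens character-sum strategy underlying the cited references: the symmetrization to $\sum_{b\in B}\overline{bk}=3r$, the evaluation $\sum_{k=1}^{r-1}\overline{bk}\,\zeta^k=r/(\zeta^{b^{-1}}-1)$, and the observation that an opposite pair contributes $-1$ are all correct and are precisely what Reid records in \cite[\S5]{YPG}. The one place your outline is genuinely thin is the converse step: you assert that the identity $\sum_{b\in B}1/(\zeta^{b^{-1}}-1)=-3$ at all primitive $r$-th roots forces $B$ to split into three opposite pairs ``by viewing the identity in the group ring and inspecting pole orders,'' but this is the actual content of \cite[(5.4) Theorem]{YPG} and requires a careful induction on divisors of $r$ together with the combinatorics of which $b\in B$ become trivial modulo each divisor. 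Your treatment of $q>1$ is in the right spirit, but the passage from ``$\{a_1,a_2,a_3\}\equiv\{1,a,-a\}\bmod q$'' to the full conclusion modulo $r$ again leans on the same nontrivial bookkeeping. If you intend this as more than a pointer to the literature, that inductive argument is what needs to be filled in.
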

\begin{remark}
Note that in the statement of \cite[(5.6) Corollary]{YPG}, the $q=1$ case is missing, but it can be easily derived from \cite[(5.4) Theorem]{YPG}.
\end{remark}
In order to study extremely non-canonical singularities by the toric method, we change the condition of the above terminal lemma and introduce the following ``non-canonical" lemma.
 
\begin{lem}\label{non-canonical lemma}There exists a positive real number $\delta_0\leq \delta_3<1$ with the following property.
 Let $\frac{1}{r}(a_1, \cdots, a_4; e)$ be a $5$-tuple of rational numbers with denominator $r$ such that 
$
q=\gcd(e, r)=\gcd(a_4, r)
$, and $a_1, a_2, a_3$ are coprime to $r$. Assume
%\begin{enumerate} 
% \item[(\ding{73}0)] $a_1+a_2+a_3+a_4- e$ is coprime to $r$
 %\end{enumerate}
%and  assume 
one of the following holds:
\begin{enumerate} 
 \item[(\ding{73}1)] $a_1+a_2\equiv e\bmod r$;
 
  \item[(\ding{73}2)] $2a_4\equiv e\bmod r$;
 
 \item[(\ding{73}3)] $2a_1\equiv e \bmod r$ and $q\leq 2$.
 \end{enumerate}
Moreover, assume that
\begin{enumerate} 
 \item there exists a positive integer $k_0$ such that $1\leq k_0\leq r-1$ and
\[
\sum_{i=1}^4\overline{a_ik_0}= \overline{ek_0}+k_0;
\]

 \item for every integer $k$ such that $1\leq k\leq r-1$ and $k\neq k_0$,
\[
\sum_{i=1}^4\overline{a_ik}\geq \overline{ek}+ r.
\]
\end{enumerate}
Then $\frac{k_0}{r}\leq 1-\delta_0$. Here $\delta_3$ is the constant from Corollary \ref{acc quot 3}.
\end{lem}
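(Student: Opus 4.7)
The plan is to interpret each hypothesis as a constraint on the minimal log discrepancy of an auxiliary cyclic quotient singularity in dimension $3$ or $5$, and then apply Corollary \ref{acc quot 3} or Corollary \ref{acc quot 5}. The foundational observation is that for a cyclic quotient $\frac{1}{r}(b_1,\dots,b_n)$, the log discrepancy of the toric divisor corresponding to the weight vector $\frac{1}{r}(\overline{b_1 k},\dots,\overline{b_n k})$ equals $\frac{1}{r}\sum_i \overline{b_i k}$, and the minimum over $k\in\{1,\dots,r-1\}$ is an upper bound on the $\mld$ at the origin. Conditions (1) and (2) in the lemma read as saying that the ``hyperquotient-type'' quantity $\frac{1}{r}\bigl(\sum_{i=1}^4 \overline{a_i k}-\overline{ek}\bigr)$ equals $k_0/r$ at $k=k_0$ and is $\geq 1$ at all other $k$.

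First I would treat the easy case $q=\gcd(e,r)=1$ by considering the $5$-dimensional isolated cyclic quotient $V=\frac{1}{r}(a_1,a_2,a_3,a_4,-e)$. Since $\overline{-ek}=r-\overline{ek}$ for all $k\in\{1,\dots,r-1\}$, the log discrepancy of $V$ at the weight $k$ equals $1+\frac{1}{r}\bigl(\sum \overline{a_i k}-\overline{ek}\bigr)$, which by the lemma's hypotheses is $1+k_0/r<2$ at $k=k_0$ and $\geq 2$ at every other $k$. Hence $\mld(V)=1+k_0/r<2$ and Corollary \ref{acc quot 5} yields $k_0/r\leq 1-\delta_5$. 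This step requires none of (\ding{73}1)--(\ding{73}3).

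The harder case is $q\geq 2$, where there exist ``bad'' weights $k=jr/q$ with $ek\equiv 0\bmod r$; at such $k$ the log discrepancy of $V$ is only guaranteed to be $\geq 1$ and may approach $1$, blocking the argument above. Each of (\ding{73}1), (\ding{73}2), (\ding{73}3) is tailored to provide an extra pairing that allows one to pass to a $3$-dimensional isolated cyclic quotient (made up of three of the $a_i$'s, all coprime to $r$) and then to apply Corollary \ref{acc quot 3}. Under (\ding{73}1) one uses $\overline{a_1 k}+\overline{a_2 k}\in\{\overline{ek},\overline{ek}+r\}$ to deduce in particular $\overline{a_3 k_0}+\overline{a_4 k_0}=k_0$ and then analyses $W=\frac{1}{r}(a_1,a_2,a_3)$; under (\ding{73}2) the pairing $\overline{ek}\in\{2\overline{a_4 k},2\overline{a_4 k}-r\}$ plays the analogous role with the same $W$; under (\ding{73}3) the assumption $q\leq 2$ leaves at most one bad weight $k=r/2$ (checked by hand), and one reduces to a quotient of the form $\frac{1}{r}(a_2,a_3,a_4)$. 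In each case one shows the $\mld$ of the auxiliary $3$-dim isolated quotient is realised close to $k_0/r$ and strictly less than $1$, so Corollary \ref{acc quot 3} forces $k_0/r\leq 1-\delta_3$. Taking $\delta_0:=\min(\delta_3,\delta_5)$ yields the asserted bound and is consistent with the hypothesis $\delta_0\leq\delta_3$.

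\emph{Main obstacle.} The principal technical difficulty is, in each of the three cases under $q\geq 2$, verifying that the minimum of the log discrepancy function on the chosen $3$-dimensional auxiliary cyclic quotient is genuinely attained at (or near) the weight $k_0$, rather than at some other weight where only weaker control is available. This requires branching into subcases depending on whether $\overline{a_1 k}+\overline{a_2 k}$ or $2\overline{a_i k}$ wraps around modulo $r$, and using the inequality $\sum \overline{a_i k}\geq \overline{ek}+r$ from condition (2) to rule out each stray subcase. Because the (\ding{73})-conditions pin down only certain sums of residues modulo $r$ and not the individual values, some further permutation or rearrangement of the $a_i$'s is needed to route the analysis through the dimension-$3$ ACC, and this combinatorial bookkeeping is the longest part of the proof.
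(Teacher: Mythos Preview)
Your overall framework—setting $\delta_0=\min(\delta_3,\delta_5)$ and interpreting the hypotheses via auxiliary cyclic quotient singularities in dimensions $3$ and $5$—is exactly what the paper does. However, your case split and the obstacle you identify are both off, and this leads you into an unnecessary and incompletely justified detour.

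The paper does \emph{not} split on $q=1$ versus $q\geq 2$; it splits on whether $\overline{ek_0}$ (equivalently $\overline{a_4k_0}$) is zero or not. Your worry that ``bad'' weights $k$ with $\overline{ek}=0$ might give log discrepancy close to $1$ in the $5$-dimensional quotient $V=\frac{1}{r}(a_1,a_2,a_3,a_4,-e)$ is misplaced: you are computing $\mld_0(V)$, so only primitive vectors in the \emph{relative interior} of $\sigma$ contribute. At a bad weight $k\neq k_0$, both the $a_4$- and $(-e)$-coordinates have residue $0$, so any interior vector in that weight class has those two coordinates $\geq 1$; combined with hypothesis (2) (which forces $\sum_{i=1}^3\overline{a_ik}\geq r$ at such $k$) this already gives log discrepancy $\geq 2$. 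Thus the $5$-dimensional argument goes through uniformly whenever $\overline{ek_0}\neq 0$, regardless of $q$.

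The only genuine obstruction is when $\overline{ek_0}=0$, since then the candidate vector $\beta=\frac{1}{r}(\overline{a_1k_0},\dots,\overline{a_4k_0},\overline{-ek_0})$ lies on the boundary of $\sigma$. Here the paper writes $r=pq$ and $k_0=pk'_0$, then passes not to $\frac{1}{r}(a_1,a_2,a_3)$ but to the $3$-dimensional isolated quotient $\frac{1}{q}(a_1,a_2,a_3)$; the identities $p(\overline{n})_q=(\overline{pn})_r$ translate hypotheses (1) and (2) at multiples of $p$ directly into the statement $\mld_0(Z')=k'_0/q=k_0/r<1$, and Corollary~\ref{acc quot 3} finishes. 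This is short and clean.

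In particular, the paper explicitly remarks that assumptions (\ding{73}1)--(\ding{73}3) are \emph{never used} in the proof; they are recorded only because they always hold in the applications and may help sharpen $\delta_0$. Your plan to invoke them case-by-case for $q\geq 2$ is therefore unnecessary, and the sketch you give there (e.g.\ deducing $\overline{a_3k_0}+\overline{a_4k_0}=k_0$ and then analysing $\frac{1}{r}(a_1,a_2,a_3)$) does not visibly connect the conclusion to the auxiliary quotient you name; as written it is not a complete argument.
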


\begin{proof}
We will show that we can take $\delta_0=\min\{\delta_3, \delta_5\}>0$. Here $\delta_3$ and $\delta_5$ are constants from Corollaries \ref{acc quot 3} and \ref{acc quot 5}.

Since $a_1, a_2, a_3$ are coprime to $r$, we know that $\overline{a_1k_0}, \overline{a_2k_0}, \overline{a_3k_0}$ are not $0$. Since $
\gcd(e, r)=\gcd(a_4, r)
$, $\overline{a_4k_0}=0$ if and only if $\overline{ek_0}=0$.

First assume that $\overline{a_4k_0}\neq 0$ and $\overline{ek_0}\neq 0$.
Consider $Z=\bA^5/\bmu_r$ to be a cyclic quotient singularity of type $\frac{1}{r}(a_1, \cdots, a_4, -e)$.
It suffices to show that $\mld_0(Z)=1+\frac{k_0}{r}<2$. Keep the notation in Subsection \ref{subsec cyclic quotient}. 
By the existence of log resolutions in toric category, we can compute the minimal log discrepancy by torus invariant divisors over $Z$. Recall that for the exceptional divisor $E_\alpha$ corresponding to a primitive vector $\alpha\in N\cap \sigma$, its log discrepancy is computed by $a(E_\alpha; Z)=\alpha(x_0\cdots x_4)$ (Proposition \ref{blow up a}). This means that (\cite{Ambro})
\[
\mld_0(Z)=\min\{\alpha(x_0\cdots x_4) \mid \alpha\in N\cap \text{relin}(\sigma)\},
\]
where $\text{relin}(\sigma)$ is the relative interior of $\sigma$.
By the assumption, we can consider 
\[
\beta=\frac{1}{r}(\overline{a_1k_0}, \ldots,\overline{a_4k_0},\overline{-ek_0} )\in N\cap \text{relin}(\sigma).
\]
Assumption (1) gives 
\[
\beta(x_0\cdots x_4)=\frac{1}{r} (\sum_{i=1}^4\overline{a_ik_0}+r- \overline{ek_0}) =1+\frac{k_0}{r}<2.
\]
On the other hand, take any $\alpha\in N\cap \text{relin}(\sigma)$ such that $\alpha\neq \beta$, 
recall that we can write $\alpha=(b_0, \ldots, b_4)$ such that $\alpha\equiv \frac{1}{r}(a_1j, \ldots, a_4j, -ej) \bmod \ZZ^{5}$ for some $j=0,1,\ldots, r-1$ and $b_i>0$ for all $i$. If $j=0$, then $\alpha(x_0\cdots x_4)\geq 5$. If $j=k_0$, then $\alpha(x_0\cdots x_4)\geq \beta(x_0\cdots x_4)+1\geq 2$. If $1\leq j\leq r-1$ and $j\neq k_0$, then since  $b_4>0$, we know that $b_4\geq \frac{1}{r}(r-\overline{je})$, and by  assumption (2), 
\[
\alpha(x_0\cdots x_4)\geq \frac{1}{r} (\sum_{i=1}^4\overline{a_ij}+r- \overline{ej}) \geq 2.
\]
Hence $\mld_0(Z)=1+\frac{k_0}{r}<2$. By Corollary \ref{acc quot 5}, $\frac{k_0}{r}\leq 1-\delta_5$.

Then assume that $\overline{a_4k_0}=\overline{ek_0}= 0$. Denote $q=\gcd(e, r)=\gcd(a_4, r)$ and $r=pq$. Then $p$ divides $k_0$ and we can write $k_0=pk'_0$. Now let $(\overline{n})_q$ be the smallest non-negative residue of $n$ modulo $q$, then $p(\overline{n})_q=\overline{pn}$. Hence we get new relations for $\frac{1}{q}(a_1,a_2,a_3)$ and $k'_0$ as the following:
for every integer $k'$ such that $1\leq k'\leq q-1$ and $k'\neq k'_0$,
\[
\sum_{i=1}^3(\overline{a_ik'})_q=\frac{1}{p}\sum_{i=1}^3\overline{a_ipk'}=\frac{1}{p}(\sum_{i=1}^4\overline{a_ipk'}-\overline{epk'})\geq q;
\]
on the other hand,
\[
\sum_{i=1}^3(\overline{a_ik'_0})_q=\frac{1}{p}\sum_{i=1}^3\overline{a_ipk'_0}=\frac{1}{p}(\sum_{i=1}^4\overline{a_ipk'_0}-\overline{epk'_0})= k'_0.
\]
Now we can consider $Z'=\bA^3/\bmu_q$ to be a cyclic quotient singularity of type $\frac{1}{q}(a_1, a_2, a_3)$. It is isolated since $a_1, a_2, a_3$ are coprime to $r$.
By the same calculation as above, $\mld_0(Z')=\frac{k'_0}{q}=\frac{k_0}{r}<1$. To be more precise, 
we can consider 
\[
\beta'=\frac{1}{q}((\overline{a_1k'_0})_q, (\overline{a_2k'_0})_q, (\overline{a_3k'_0})_q )\in N\cap \text{relin}(\sigma),
\]
here $N$ is the lattice corresponding to $Z'$ by abusing the notation.
Then 
\[
\beta'(x_0x_1x_2)=\frac{1}{q}  \sum_{i=1}^3(\overline{a_ik'_0})_q =\frac{k'_0}{q}<1.
\]
On the other hand, take any $\alpha\in N\cap \text{relin}(\sigma)$ such that $\alpha\neq \beta'$, 
recall that we can write $\alpha=(b_0, b_1, b_2)$ such that $\alpha\equiv \frac{1}{r}(a_1j, a_2j, a_3j) \bmod \ZZ^{3}$ for some $j=0,1,\ldots, q-1$ and $b_i>0$ for all $i$. We may assume that $b_i<1$ for all $i$, otherwise $\alpha(x_0x_1x_2)\geq 1$. Hence 
$\alpha= \frac{1}{q}((\overline{a_1j})_q, (\overline{a_2j})_q, (\overline{a_3j})_q)$ with $1\leq j\leq q-1$ and $j\neq k'_0$. In this case,  
\[
\alpha(x_0x_1x_2)= \frac{1}{q} \sum_{i=1}^3(\overline{a_ij})_q \geq 1.
\]
Hence $\mld_0(Z')=\frac{k'_0}{q}<1$. 
By Corollary \ref{acc quot 3}, $\frac{k_0}{r}\leq 1-\delta_3$.
\end{proof}

\begin{remark}\label{remark LX}
In the proof of Lemma \ref{non-canonical lemma}, assumptions (\ding{73}1--3) are not used. But we keep these assumptions for two reasons. For one thing, we always get %(\ding{73}0) and 
one of (\ding{73}1--3) in our applications (see Propositions  \ref{prop xy}  and \ref{prop x2}). For the other, these assumptions will be helpful when one tries to find an optimal or effective value for $\delta_0$ (and $\delta$ in Theorem \ref{gap main}). 
In fact, in a recent preprint by Liu and Xiao \cite{LiuXiao}, they show that $\delta_0=\frac{1}{19}$ in Lemma \ref{non-canonical lemma} by some clever arguments with a help of computer program.
%This value is optimal due to the example $(a_1, \ldots, a_4, e, k_0, r)=(12,15,16,1,5,18, 19)$. They also show that the value of $\delta_0$ can be worse without assuming  %(\ding{73}0)  or 
%one of (\ding{73}1--3).
\end{remark}

\section{Reduction to extremely non-canonical singularities}\label{reduction ENC}
In this section, we   reduce the 1-gap conjecture to the case of extremely non-canonical singularities. During the preparation of this paper, we are informed by Jingjun Han and Jihao Liu that they also got similar result as Theorem \ref{X<Y} independently. 

\begin{thm}\label{X<Y}
Let $X$ be a normal quasi-projective variety with klt singularities such that $\mld(X)<1$. Then there exists a projective birational morphism $Y\to X$ such that $Y$ is extremely non-canonical and $\mld(X)\leq \mld(Y)<1$.
\end{thm}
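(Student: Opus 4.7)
The plan is to construct $Y$ as a $\mathbb{Q}$-factorial extraction over $X$: pick a divisor $E_0$ over $X$ computing $\mld(X)$, and extract, via a BCHM-type extraction theorem, all other divisorial valuations over $X$ of log discrepancy at most $1$. The starting point is the finiteness of
\[
S:=\{E\text{ divisor over }X:a(E;X)\leq 1\},
\]
which is a standard consequence of the klt hypothesis (this set coincides, up to the prime divisors of codimension one on a terminalisation, with the exceptional set of a $\mathbb{Q}$-factorial terminalisation of $X$). Choosing $E_0\in S$ with $a(E_0;X)=\mld(X)<1$ and applying the extraction theorem, I obtain a projective birational morphism $f:Y\to X$ with $Y$ $\mathbb{Q}$-factorial klt whose $f$-exceptional divisors are exactly $S\setminus\{E_0\}$. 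Setting $B_Y:=\sum_{F\in S\setminus\{E_0\}}(1-a(F;X))F$, the crepant identity $K_Y+B_Y=f^*K_X$ holds.

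Next I would verify that $Y$ is extremely non-canonical. Any divisor $F$ exceptional over $Y$ with $F\neq E_0$ lies outside $S$, hence $a(F;X)>1$, and the identity
\[
a(F;Y)=a(F;X)+v_F(B_Y)\geq a(F;X)>1
\]
shows that $E_0$ is the only candidate exceptional divisor over $Y$ with log discrepancy at most $1$, and in particular no exceptional divisor over $Y$ has $a=1$. The lower bound $\mld(X)\leq \mld(Y)$ then follows from $a(E_0;Y)=a(E_0;X)+v_{E_0}(B_Y)\geq \mld(X)$.

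The crux of the argument, and the main obstacle I expect, is the strict upper bound $a(E_0;Y)<1$, equivalently $v_{E_0}(B_Y)<1-\mld(X)$. Extracting all of $S\setminus\{E_0\}$ at once can in principle make $v_{E_0}(B_Y)$ too large, and then $Y$ turns out to be canonical rather than extremely non-canonical. To circumvent this one can either extract a carefully chosen subset of $S\setminus\{E_0\}$ (starting for instance with those $F$ satisfying $v_{E_0}(F)=0$ and iterating), or, more cleanly, run a relative $(K_T+B_T^{\circ})$-MMP on a $\mathbb{Q}$-factorial terminalisation $T\to X$ where $B_T^{\circ}$ is the full crepant boundary on $T$ with its $E_0$-component removed; then
\[
(K_T+B_T^{\circ})-(T\to X)^*K_X=(a(E_0;X)-1)E_0
\]
has only $E_0$ with negative coefficient, so the MMP contracts precisely $E_0$ and produces a $Y$ with $a(E_0;Y)<1$ by construction. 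Carrying out such a refinement so as to guarantee $a(E_0;Y)<1$ is where the main technical work of the proof lies.
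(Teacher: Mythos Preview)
Your first construction and its diagnosis are correct, but the proposed MMP fix has a sign error that makes it fail. With $B_T^{\circ}$ the crepant boundary minus its $E_0$-component one has $K_T+B_T^{\circ}\equiv_X -cE_0$ where $c=1-a(E_0;X)>0$, so the $(K_T+B_T^{\circ})$-MMP over $X$ is a $(-E_0)$-MMP. In any divisorial step the exceptional divisor $F$ has $-F$ relatively ample, so $F\cdot R<0$ on the contracted ray $R$; but $(-cE_0)$-negativity forces $E_0\cdot R>0$, whence $F\neq E_0$. Thus $E_0$ is \emph{never} contracted: the MMP terminates with $-cE_{0,Y}$ nef over $X$ and $E_0$ still a divisor on $Y$ (e.g.\ when $S=\{E_0\}$ the class $-E_0$ is already relatively ample, the MMP is trivial, and $Y=T$ is terminal). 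The heuristic ``only $E_0$ carries a negative coefficient, so the MMP contracts precisely $E_0$'' is backwards.

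The paper runs the MMP in the opposite direction and, crucially, does not pre-select the special divisor. From the extraction $\pi\colon W\to X$ of all of $S$ with $K_W+\Delta=\pi^*K_X$, it runs a $(K_W+(1+\epsilon)\Delta)$-MMP over $X$ --- equivalently a $\Delta$-MMP --- and takes $W'$ to be the target of the \emph{first} divisorial contraction, contracting some $E_1\in S$. Negativity for this $\Delta$-negative step gives, on a common resolution $p\colon Z\to W$, $p'\colon Z\to W'$, the relation $p^*\Delta=p'^*\Delta'+F$ with $F\geq 0$ and $\mult_{E_1}p_*F>0$, whence $a(E_1;W')=1-\mult_{E_1}p_*F<1$; any other $E$ exceptional over $W'$ is already exceptional over $W$, so $a(E;W')\geq a(E;W',\Delta')=a(E;X)>1$. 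Thus $W'$ is extremely non-canonical with special divisor $E_1$ (chosen by the MMP, not in advance), and $\mld(X)\leq\mld(W')$ follows from $\text{\rm ld}(W')\geq\text{\rm ld}(W',\Delta')=\mld(X)$.
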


\begin{proof}
Let $X$ be a normal quasi-projective variety with klt singularities such that $\mld(X)<1$. 
Take $\mathfrak{E}$ to be the set of all exceptional prime divisors $E$ over $X$ with $a(E; X)\leq 1$, which is a finite set by \cite[Proposition 2.36]{KM}.
By \cite[Corollary 1.4.3]{BCHM}, there exists a projective birational morphism $\pi: W\to X$ with $W$ $\bQ$-factorial such that   $\mathfrak{E}$ is the set of exceptional divisors of $\pi$.
We may write 
$$K_{W}+\Delta=\pi^*K_X$$
where $\Delta$ is a non-zero effective $\bQ$-divisor as $\mld(X)<1$.  Note that $(W, \Delta)$ is canonical by the construction.
For a sufficiently small $\epsilon>0$, by \cite{BCHM}, we can run a $(W, (1+\epsilon)\Delta)$-MMP over $X$, which terminates and reaches a minimal model over $X$ contracting $\text{Supp}(\Delta)$. Denote $W'\to X$ to be the model obtained by the first divisorial contraction in this MMP. We will show that $W'$ satisfies the requirement of the theorem.

Denote $E_1$ to be the prime divisor on $W$ contracted on $W'$, and denote $\Delta'$ to be the strict transform of $\Delta$ on $W'$. Take a common resolution $p: Z\to W$, $p': Z\to W'$. As this MMP is also a $\Delta$-MMP over $X$,
we can write
$$
p^*\Delta=p'^*\Delta'+F
$$
where $F$ is an effective $\bQ$-divisor and $\mult_{E_1}(p_*F)>0$.
On the other hand, we have 
$$
p^*(K_{W}+\Delta)=p'^*(K_{W'}+\Delta')
$$
as $K_{W}+\Delta\equiv_X 0$.
Hence 
\begin{align*}
a(E_1; W')={}&\mult_{E_1^Z}(K_Z-p'^*K_{W'})+1\\
={}&\mult_{E_1^Z}(K_Z-p^*K_W-F)+1\\
={}&-\mult_{E_1}p_*F+1<1.
\end{align*}
Here $E_1^Z$ is the strict transform of $E_1$ on $Z$.
Take any exceptional prime divisor $E\neq E_1$ over $W'$, then $E$ is also exceptional over  $W$, and hence
\[a(E; W')\geq a(E; W', \Delta')=a(E; W, \Delta)>1.\]
Note that $W'$ is $\bQ$-factorial, so we conclude that $W'$ is  extremely non-canonical.  The fact that $\mld(X)\leq \mld(W')$ follows easily from
$\mld(W')= \text{ld}(W')\geq  \text{ld}(W', \Delta')=\mld(X).$
\end{proof}

\section{The $1$-gap theorem for $3$-dimensional extremely non-canonical singularities: the hyperquotient case}\label{section HQ}
%\begin{thm}
%There exists a positive real number $\delta<1$, such that there is no $(1-\delta)$-klt extremely non-canonical singularity in dimension $3$.
%\end{thm}

In this section, we treat a special case of Theorem \ref{gap main}, where $X$ is an isolated extremely non-canonical singularity whose index $1$ cover is an isolated cDV singularity.
This is the most technical part of this paper. In the proof, we mimic the classification of $3$-dimensional terminal singularities following the explanation given by Reid \cite[Sections 6 and 7]{YPG} case by case. Of course our situation is more complicated than the case of terminal singularities, but the strategy of \cite[Sections 6 and 7]{YPG} still works after some modifications. The essential differences in our proof are that we replace the criterion for a hyperquotient singularity to be terminal (\cite[(4.6) Theorem]{YPG}) by our new criterion for a hyperquotient singularity to be extremely non-canonical (Theorem \ref{key thm}), which leads to more non-trivial discussions in each case; and in order to apply the terminal lemma as in \cite[Section 7]{YPG}, we need to first apply our new ``non-canonical" lemma (Lemma \ref{non-canonical lemma}) to exclude certain cases to guarantee the condition of the terminal lemma.
We try to write down all the details to make the proof convincible and friendly to readers not familiar with \cite{YPG}.

The following is the main theorem of this section.

\begin{thm}\label{no hyperquotient}There exists a positive real number $\delta>0$ such that there is no $3$-dimensional hyperquotient singularity $(P\in X)=(Q\in Y)/\bmu_r$ satisfying the followings: 
\begin{enumerate}
\item $(Q\in Y)$ is the canonical index $1$ cover of $(P\in X)$;
\item $(Q\in Y)\subset \bA^4$ is an isolated cDV singularity;
\item $(P\in X)$ is an isolated extremely non-canonical singularity with $\mld(X)>1-\delta$.
\end{enumerate}
In fact, we can take $\delta=\delta_0$, where $\delta_0$ is the constant from Lemma \ref{non-canonical lemma}.
\end{thm}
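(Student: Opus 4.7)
The plan is to argue by contradiction along the lines of the Mori--Reid classification of 3-dimensional terminal singularities \cite[Sections 6--7]{YPG}, with the terminality criterion replaced by Theorem \ref{key thm} and the terminal lemma replaced by Lemma \ref{non-canonical lemma}. Suppose $(P\in X)=(Q\in Y)/\bmu_r$ is such a hyperquotient with $\mld(X)>1-\delta_0$; write $Y:(f=0)\subset\bA^4$ and take the small $\bmu_r$-action to be of type $\frac{1}{r}(a_0,a_1,a_2,a_3;e)$, so each $a_i$ is coprime to $r$ by isolatedness. Theorem \ref{key thm} provides a unique primitive $\beta\in N\cap\sigma$ with $1-\delta_0<\beta(x_0x_1x_2x_3)-\beta(f)<1$, and every other primitive $\alpha\in N\cap\sigma$ satisfies $\alpha(x_0x_1x_2x_3)-\alpha(f)>1$. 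Writing $\beta=\frac{1}{r}(\overline{a_0k_0},\ldots,\overline{a_3k_0})$ for some $1\leq k_0\leq r-1$, the $\beta$-statement becomes the equality $\sum_i\overline{a_ik_0}=\overline{ek_0}+k_0$ with $k_0/r>1-\delta_0$.

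For any other $1\leq k\leq r-1$, I would apply Theorem \ref{key thm} to the vector $\alpha_k=\frac{1}{r}(\overline{a_0k},\ldots,\overline{a_3k})\in N\cap\sigma$, which differs from $\beta$. Since every monomial $\xx^{\bf m}$ of $f$ has $\bf m\cdot(a_0,\ldots,a_3)\equiv e\bmod r$, one gets $\alpha_k(f)\geq \overline{ek}/r$, with equality whenever some monomial of $f$ attains the minimal $\alpha_k$-weight. In that situation the integral inequality $\sum_i\overline{a_ik}\geq\overline{ek}+r$ follows, which is exactly condition $(2)$ of Lemma \ref{non-canonical lemma}; the $\beta$-equality is condition $(1)$. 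If one of the congruences (\ding{73}1--3) can also be produced, the non-canonical lemma then forces $k_0/r\leq 1-\delta_0$, contradicting $k_0/r>1-\delta_0$ and closing the argument.

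To secure (\ding{73}1--3), I would bring $f$ into a $\bmu_r$-equivariant normal form in each cDV type following \cite[Section 6]{YPG}: cA, cAx, cD, cDx, cE, cEx. In the cA/cAx cases $f$ contains a monomial of the form $x_ix_j$ or $x_i^2$ of weight $e$, which upon relabeling the four coordinates so that the ``special'' variable plays the role of $a_4$ is exactly one of (\ding{73}1), (\ding{73}2), (\ding{73}3). In the cD/cDx and cE/cEx cases $f$ has only cubic or higher leading terms, so no such two-variable relation is immediate; one then chooses small auxiliary vectors $\alpha$ (as in Reid's recipes for the standard toric blowups) and feeds the resulting extra congruences, together with the (1)--(2) data already in hand, into the terminal lemma (Theorem \ref{terminal lemma}) applied to the enlarged $6$-tuple $\{a_0,a_1,a_2,a_3,-e,-1\}$. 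The pairing-up produced by the terminal lemma, combined with the normal form of $f$, then yields a two-variable relation and hence one of (\ding{73}1--3).

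The main obstacle is the case analysis for cD/cDx and cE/cEx. Two subtleties arise there. First, the absence of a quadratic monomial in $f$ means that the test weightings $\alpha_k$ needed to produce usable inequalities must be chosen carefully so that $\alpha_k(f)=\overline{ek}/r$ is actually attained by a monomial of $f$; otherwise the translated inequality $\sum\overline{a_ik}\geq\overline{ek}+r$ is weaker and Lemma \ref{non-canonical lemma}(2) may fail. Second, one must verify that the slack $\delta_0$ between the assumed $\beta(x_0x_1x_2x_3)-\beta(f)$ and $1$ is small enough for the hypotheses of Theorem \ref{terminal lemma} to be applicable in the relevant subcases, so that the pairing structure it provides is genuinely compatible with the cDV form of $f$. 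Handling the exceptional subcases where no such compatibility exists — Reid's most intricate cases in the terminal setting — will require showing directly, via the uniqueness of $\beta$, that the tuple $(a_0,\ldots,a_3;e)$ cannot coexist with any cDV normal form of $f$, completing the contradiction.
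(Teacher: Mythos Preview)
Your overall strategy---reduce to normal forms and use Theorem \ref{key thm} as a replacement for the terminal criterion---matches the paper, but the logical role you assign to Lemma \ref{non-canonical lemma} is inverted, and this is a genuine gap.

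You assert that the distinguished vector $\beta=\alpha_{k_0}$ yields $\sum_i\overline{a_ik_0}=\overline{ek_0}+k_0$, which you then feed into Lemma \ref{non-canonical lemma} as its condition (1) for the final contradiction. But this equality is not what $\beta$ produces. From $1-\delta_0<\beta(x_0x_1x_2x_3)-\beta(f)<1$ and the congruence $\sum_i a_i-e\equiv 1\bmod r$, the sum $\sum_i\overline{a_ik_0}$ is congruent to $\overline{ek_0}+k_0\bmod r$, so a priori it equals either $\overline{ek_0}+k_0$ or $\overline{ek_0}+k_0+r$ (or more). The paper shows, in Propositions \ref{prop xy}(3) and \ref{prop x2}(3), that the first alternative is exactly what Lemma \ref{non-canonical lemma} \emph{excludes}: assuming it leads to $k_0/r\le 1-\delta_0$, contradicting the $\beta$-inequality. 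Hence the surviving case is $\sum_i\overline{a_ik_0}=\overline{ek_0}+k_0+r$, the same identity as for all other $k$. At this point condition (1) of Lemma \ref{non-canonical lemma} fails, and the lemma cannot close the argument.

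What remains is precisely the hypothesis of the terminal lemma (Theorem \ref{terminal lemma}) for \emph{all} $k=1,\dots,r-1$, which yields the finite list of possible types $\frac{1}{r}(a,b,c,d;e)$ in each cDV normal form. The paper then excludes every type individually (Subsections \ref{cA}--\ref{cDE}) by choosing specific test weightings $\alpha_{k_1},\alpha_{k_2},\dots$ and showing no monomial of $g$ can realize the required weight; this case-by-case exclusion is the bulk of the work and is entirely absent from your outline. A smaller error: it is not true that every $a_i$ is coprime to $r$---the $\bmu_r$-action is free only on $Y\setminus\{Q\}$, not on $\bA^4$, so one weight (say $a_4$) may share a factor with $r$ (cf.\ Rule II and Proposition \ref{prop xy}(2)).
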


\begin{proof}[Outline of the proof]
To the contrary, assume that such a $3$-dimensional hyperquotient singularity $(P\in X)=(Q\in Y)/\bmu_r$ exists. Suppose that $Y=(f=0)\subset \bA^4$ and the hyperquotient is of type $\frac{1}{r}(a,b,c,d;e)$. 

In Subsection \ref{sub setting rules}, we introduce basic settings and restrictions on $f$ and $\frac{1}{r}(a,b,c,d;e)$, and roughly splits the possible $f$ into $5$ cases: $cA$, odd, $cD_4$, $cD_n$, $cE$.

In Subsection \ref{check terminal lemma}, using Lemma \ref{non-canonical lemma}, we check that $\frac{1}{r}(a,b,c,d;e,1)$ satisfies the assumption of the terminal lemma (Theorem \ref{terminal lemma}).

By applying the terminal lemma, we can get all possible values for $\frac{1}{r}(a,b,c,d;e)$ in each case.
In Subsection \ref{cA}, we exclude the $cA$ case.
In Subsection \ref{odd}, we exclude the odd case.
In Subsection \ref{cDE}, we exclude the $cD_4$, $cD_n$, $cE$ cases.
Then the nonexistence is proved.
\end{proof}
\subsection{Settings and rules}\label{sub setting rules}

In this subsection we introduce the settings and rules. 

Throughout the remaining part of this section, 
%we take $\delta=\delta_0=\min\{\delta_3, \delta_5\}$, where $\delta_3$ and $\delta_5$ are constants from Corollaries \ref{acc quot 3} and \ref{acc quot 5}.  
we take $\delta=\delta_0$, where $\delta_0$ is the constant from Lemma \ref{non-canonical lemma}. Recall that $\delta\leq \delta_3$, where $\delta_3$ is the constant from Corollary \ref{acc quot 3}.

We assume that such a $3$-dimensional hyperquotient singularity $(P\in X)=(Q\in Y)/\bmu_r$ as in Theorem \ref{no hyperquotient} exists, and we will exclude all the possibilities to get a contradiction.

As the index of $X$ is $r$ and $\mld(X)<1$, it is obvious that $\mld(X)\leq 1-\frac{1}{r}$.
Since $\delta\leq \delta_3\leq \frac{1}{13}$ by Example \ref{12/13}, we always have $r>13$.

We will freely and frequently use the notation in Subsection \ref{subsec cyclic quotient}.
Set $(x,y,z,t)=(x_1, x_2, x_3, x_4)$ to be the local analytic coordinates on $\bA^4$, $Y=(f=0)\subset \bA^4$, and the action of $\bmu_r$ is given by 
\[
\bmu_r\ni \xi: (x, y, z, t; f)\mapsto (\xi^{a}x, \xi^{b}y, \xi^{c}z, \xi^{d}t;\xi^{e}f).
\]
We also identify $(a,b,c,d)=(a_1, a_2, a_3, a_4)$, and recall that these weights are viewed as elements in $\ZZ/r$. Note that all monomials in $f$ shall have the same weight $e \bmod r$ as $f( \xi x, \xi y,  \xi z,  \xi t)= \xi^ef(x,y,z,t)$ by the setting.

We will always assume the following rules in the proof, which are similar to that of \cite[(6.6)]{YPG} except that Rule I is changed according to our assumption:

\medskip

\noindent {\bf Rule I:} (i) There exists at most one primitive vector $\beta\in N\cap \sigma$ such that 
\[
1-\delta< \beta(x_1\cdots x_4)-\beta(f)<1;
\]

(ii) for any vector $\alpha\in N\cap \sigma$ such that $\alpha\neq \beta$,
\[
 \alpha(x_1\cdots x_4)-\alpha(f)>1.
\]

\medskip

\noindent {\bf Rule II:} (i) If $\gcd(a_i, r)\neq 1$, then $a_i$ divides $e$, that is, $\gcd(a_i, r)$ divides $\gcd(e, r)$; 

(ii) $\gcd(a_i, a_j, r)=1$ for all $i\neq j$;

(iii) $a+b+c+d-e=1.$

\medskip

\noindent {\bf Rule III:} 
(i)  After a $\bmu_r$-equivariant analytic change of coordinates, we may assume that $f=q(x_1,\ldots , x_k)+f'(x_{k+1},\ldots, x_4)$ with $q$ a nondegenerate quadratic form in $x_1,\ldots , x_k$;

(ii) if the $3$-jet of $f$ is $x^2+y^2z$ then 
\[
f=x^2+y^2z+yg(t)+h(z,t),
\]
or if the $3$-jet is $x^2+y^3$ then 
\[
f=x^2+y^3+yg(z, t)+h(z,t).
\]

\medskip

Here Rule I is the conclusion of Theorem \ref{key thm}. Rules II and III are exactly the same with that in \cite[Page 394]{YPG}. As explained in \cite{YPG}, Rule II(i)(ii) are consequences of the fact that 
$\bmu_r$ acts freely on $Y$ outside $Q$; Rule II(iii) comes from the following: $\bmu_r$ acts on the generator $s\in \omega_Y$ by $\bmu_r\ni \xi: s\mapsto \xi^{a+b+c+d-e} s$, and the index of $K_X$ is $r$, which means that $a+b+c+d-e$ is coprime to $r$, so we may assume that $a+b+c+d-e=1$ by changing the choice of primitive root; Rule III is standard in singularity theory by taking analytic change of coordinates (see \cite[Page 394--395]{YPG}).

In fact, by Rule III, we can divide the possible $f$ into 5 cases by \cite[(6.7)]{YPG} as the following:
\begin{prop}[{\cite[(6.7) Proposition]{YPG}}]\label{f cases}
By making a $\bmu_r$-equivariant analytic change of coordinates and possibly permuting the coordinates, $f$ can be only in the following $5$ cases:

$cA$ case: $f=xy+g(z,t)$ with $g\in \mm^2$;

odd case: $f=x^2+y^2+g(z,t)$ with $g\in \mm^3$ and $a\not\equiv b \bmod r$;

$cD_4$ case: $f=x^2+g(y,z,t)$ with $g\in \mm^3$ and $g_3$ is a reduced cubic;

$cD_n$ case: $f=x^2+y^2z+g(z,t)$ with $g\in \mm^4$;

$cE$ case: $f=x^2+y^3+yg(z,t)+h(z,t)$ with $g\in \mm^3$ and $h\in \mm^4$.

Here $\mm$ is the maximal ideal of $\bC[{x,y,z,t}]$ and $g_3$ is the cubic part of $g$.
\end{prop}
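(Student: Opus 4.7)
The plan is to perform a $\bmu_r$-equivariant classification of cDV hypersurface singularities in $\bA^4$, following the strategy of \cite[Section 6]{YPG}. By Rule III(i), after a $\bmu_r$-equivariant analytic change of coordinates I may write $f = q(x_1,\ldots,x_k) + f'(x_{k+1},\ldots,x_4)$, where $q$ is a nondegenerate quadratic form and $f'\in\mm^3$. Set $k=\rank(f_2)$, the rank of the $2$-jet of $f$; the classification splits by the value of $k$.

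Suppose $k\geq 2$. Then $q$ involves two variables $x_i,x_j$ whose weights $a_i,a_j$ necessarily satisfy $2a_i\equiv 2a_j\equiv e\bmod r$, because the squares $x_i^2,x_j^2$ appear in $f$ with weight $e$. If moreover $a_i+a_j\equiv e\bmod r$, then the monomial $x_ix_j$ also has weight $e$, so a $\bmu_r$-equivariant linear change of $(x_i,x_j)$ rewrites the relevant part of $q$ in the form $x_ix_j$; absorbing the remaining quadratic terms into $f'$ and renaming coordinates produces the $cA$ form $f=xy+g(z,t)$. If instead $a_i+a_j\not\equiv e\bmod r$, then $a_i\not\equiv a_j\bmod r$ (since $2a_i\equiv 2a_j$), no equivariant mixing of $x_i$ and $x_j$ is possible, and one is left with the odd case $f=x^2+y^2+g(z,t)$ with $a\not\equiv b\bmod r$.

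Suppose instead $k\leq 1$. Then (possibly after grouping an isolated square into $f'$) $f=x^2+f'(y,z,t)$ with $f'\in\mm^3$. The cDV hypothesis forces a general hyperplane section through $Q$ to be a Du Val surface singularity, which restricts the $3$-jet $f'_3$ to one of three normal forms in the remaining three variables: a reduced cubic $g_3$, producing the $cD_4$ case; the cubic $y^2z$, producing the $cD_n$ case after the further reduction recorded in Rule III(ii); or $y^3$, producing the $cE$ case, again via Rule III(ii). The absorption of higher-order terms into the standard shapes in Rule III(ii) is a straightforward application of $\bmu_r$-equivariant Weierstrass preparation, each step being justified because the group acts diagonally.

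The main technical obstacle is guaranteeing that every reduction can be made $\bmu_r$-equivariantly. Since $\bmu_r$ acts diagonally on $\bC[x,y,z,t]$ and each monomial has a well-defined weight, any substitution which mixes monomials of a common weight is automatically equivariant; this makes the splitting (Morse) lemma and Weierstrass preparation go through verbatim. The one genuinely new feature relative to the non-equivariant classification is the weight constraint $a+b\equiv e\bmod r$, which is precisely what distinguishes the $cA$ case from the odd case above.
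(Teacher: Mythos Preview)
The paper gives no independent proof of this proposition; it simply refers to \cite[(6.7)]{YPG} and remarks that only Rule III is needed. Your sketch follows that same route.

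One slip is worth flagging. In the $k\geq 2$ case you assert that both squares $x_i^2$ and $x_j^2$ must appear in $q$, forcing $2a_i\equiv 2a_j\equiv e$. This is not automatic: for instance $q=x_ix_j$ is nondegenerate of rank $2$ with no square terms at all, and then neither $2a_i$ nor $2a_j$ need be congruent to $e$. The clean dichotomy runs as follows. If some cross term $x_ix_j$ ($i\neq j$) appears in $q$, then $a_i+a_j\equiv e$; if moreover $a_i\not\equiv a_j$ then $2a_i,2a_j\not\equiv e$, so the diagonal entries of the $(x_i,x_j)$-block vanish and that block is already hyperbolic, while if $a_i\equiv a_j$ one may hyperbolize equivariantly---either way one splits off $x_ix_j$ and lands in the $cA$ form. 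If instead $q$ is purely diagonal, then at least two squares do appear and your argument on whether $a_i\equiv a_j$ applies verbatim (note also that $2a\equiv e\bmod r$ has at most two solutions, so if $k\geq 3$ two of the weights must coincide and one is again in the $cA$ case, consistent with the requirement $g\in\mm^3$ in the odd case). With this repair the remainder of your outline is correct.
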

For the proof we refer to that in \cite{YPG} and we remark that the proof only uses Rule III.

\subsection{Reduction to the terminal lemma}\label{check terminal lemma}
In this subsection, we check that $\frac{1}{r}(a,b,c,d;e,1)$ satisfies the assumption of the terminal lemma (Theorem \ref{terminal lemma}), similar to \cite[(7.2)]{YPG}. But in our setting
the existence of $\beta\in N\cap \sigma$ makes the situation more complicated. Usually $\beta$ and  $\beta'=(1,\ldots, 1)-\beta$ should be considered separately from other vectors. Note that the coprimeness is not treated in \cite[(7.2)]{YPG} but later case by case, while in our situation, we should check the coprimeness in the middle of the proof before dealing with $\beta$. This is because we need to apply Lemma \ref{non-canonical lemma} to exclude certain cases of $\beta$, where the coprimeness is already needed.

We denote by $\square$  the unit cube of $N_\bR$ removing 16 vertices, i.e., $\square=[0,1]^4\setminus \{0,1\}^4\subset \bR^4$.
For any $\alpha \in N\cap \square$, we will always use $\alpha'$ to denote the vector $\alpha'=(1,\ldots, 1)-\alpha$.
For $k=1, \ldots, r-1$, denote $\alpha_k=\frac{1}{r}(\overline{ak},\overline{bk},\overline{ck},\overline{dk})\in N\cap \square$, where $\overline{n}$ is the smallest non-negative residue modulo $r$. Note that $\alpha'_k=\alpha_{r-k}$ if and only if none of $\overline{ak},\overline{bk},\overline{ck},\overline{dk}$ is $0$.  Also note that for any $\alpha \in N\cap \square$, there exists 
some $k=1, \ldots, r-1$, such that  $\alpha\equiv \alpha_k \bmod \ZZ^4$, and  $\alpha= \alpha_k$ holds if and only if none of $\alpha(x), \alpha(y), \alpha(z), \alpha(t)$ is $1$.

\begin{prop}\label{prop xy}
Suppose that $xy\in f$. Then the followings hold.

\begin{enumerate}
\item For any $\alpha \in N\cap \square$ such that $\alpha\neq \beta, \beta'$, one of the followings holds:
\begin{enumerate}
\item[(i)] $\alpha(f)=\alpha(xy)\leq 1$ and $\alpha(zt)>1$, moreover, if $\alpha(xy)=1$, then one of $\alpha(z), \alpha(t)$ is $1$;
\item[(ii)] $\alpha(f)=\alpha(xy)-1$ and $\alpha(zt)<1$,  moreover, if $\alpha(xy)=1$, then one of $\alpha(z), \alpha(t)$ is $0$.
\end{enumerate}
The alternative cases are interchanged by the symmetry $\alpha\mapsto \alpha'=(1,\ldots, 1)-\alpha$.
 In particular,  for $k=1, \ldots, r-1$, if $\alpha_k\neq \beta, \beta'$, then these two cases imply
 \[
 \overline{ak}+\overline{bk}=\overline{ek} \text{ and } \overline{ck}+\overline{dk}=k+r
 \]
 or
 \[
 \overline{ak}+\overline{bk}=\overline{ek}+r \text{ and } \overline{ck}+\overline{dk}=k
 \]
 respectively.
 
 \item Denote $q=\gcd(e, r)$. Then $q=\gcd(d,r)$, and $a,b,c$ are coprime to $r$ (after possibly interchanging $z$ and $t$).
 \item If $\beta \in N\cap \square$, then there exists an integer $1\leq k_0\leq r-1$ such that $\beta=\alpha_{k_0}$. Moreover, $\beta(xy)\geq 1$ and $1-\delta<\beta(zt)<1$. In particular, 
 $1-\delta<\frac{k_0}{r}<1$ and
 \[
 \overline{ak_0}+\overline{bk_0}=\overline{ek_0}+r \text{ and } \overline{ck_0}+\overline{dk_0}=k_0.
 \]
 
 \item For any $k=1, \ldots, r-1$,
 \[
 \overline{ak}+\overline{bk}+\overline{ck}+\overline{dk}=\overline{ek}+k+r. 
 \]
\end{enumerate}
\end{prop}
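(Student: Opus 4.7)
The plan rests on two structural observations that will be used throughout: (a)~all monomials of $f$ have weight $e\bmod r$, so for any $\alpha\in N$ the values $\alpha(\xx^{\bf m})$ for monomials of $f$ differ by integers---in particular $\alpha(xy)-\alpha(f)\in\ZZ_{\ge 0}$---and likewise for $\alpha'$; and (b)~Rule~II(ii) forces that at most one coordinate of any $\alpha\in N\cap\square$ lies in $\{0,1\}$ (otherwise two coordinates vanish modulo $r$ simultaneously, contradicting $\gcd(a_i,a_j,r)=1$). For Part~(1), set $A=\alpha(xy)$ and $B=\alpha(zt)$; since $\alpha,\alpha'\ne\beta$, Rule~I(ii) applied to both yields $\alpha(f)+\alpha'(f)<2$, which together with $\alpha(f)\le A$, $\alpha'(f)\le 2-A$ and~(a) gives the dichotomy: either $\alpha(f)=A$ (case~i) or $\alpha(f)\le A-1$ (case~ii), the two swapping under $\alpha\leftrightarrow\alpha'$. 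In case~(ii), (b) forces $A<2$, so $\alpha(f)=A-1\in[0,1)$; if additionally $B\ge 1$, then Rule~I(ii) for $\alpha'$ forces $\alpha(f)=\alpha'(f)=0$ and $A=1$, and then monomials of $g$ attaining both minima would place one of $\alpha(z),\alpha(t)$ in $\{0\}$ and the other in $\{1\}$, contradicting~(b); hence $B<1$. The symmetric argument shows $B>1$ in case~(i). The ``moreover'' clauses when $\alpha(xy)=1$ drop out of the same bookkeeping, and translating via $A\equiv\overline{ek}/r\bmod 1$ and $A+B\equiv(\overline{ek}+k)/r\bmod 1$ yields the claimed residue identities.

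For Part~(2), to prove $q_a:=\gcd(a,r)=1$, suppose $q_a>1$ and take $k=r/q_a$, so $\alpha_k(x)=0$ while the other three coordinates lie in $(0,1)$ (by the coprimalities already in hand and~(b)). One verifies $\alpha_k\ne\beta,\beta'$ via the \emph{weak} part of Part~(3) below, and applies Part~(1); case~(ii) is impossible since $\alpha_k(xy)=\alpha_k(y)<1$, so case~(i) forces $\overline{bk}=\overline{ek}$, i.e.\ $q_a\mid b-e$. Combined with $q_a\mid e$ (Rule~II(i)) this yields $q_a\mid q_b$, contradicting $\gcd(q_a,q_b)=1$. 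Symmetrically $q_b=1$. If both $q_c,q_d>1$, take $k=r/(q_cq_d)$: $\alpha_k$ has all coordinates in $(0,1)$ and $\overline{ek}=0$, forcing case~(ii) with $\overline{ck}+\overline{dk}=k$; writing $\overline{ck}=(c\bmod q_d)\cdot r/q_d$ and analogously for $\overline{dk}$ reduces this to $q_c u+q_d v=1$ with $u,v\in\ZZ_{\ge 0}$, impossible when $q_c,q_d\ge 2$. After interchanging $z,t$ we may therefore assume $q_c=1$. Finally, taking $k=r/q$, a parallel analysis forces $q\mid d$, so $q_d=q$.

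For Part~(3), write $\beta\equiv(k_0/r)(a,b,c,d)\bmod\overline{N}$ with $k_0\in\{1,\dots,r-1\}$, so $\beta=\alpha_{k_0}+n$ with each $n_i\in\{0,1\}$ and $n_i=1$ only if $a_ik_0\equiv 0\bmod r$. Applying Rule~I(i) to $\beta$ together with~(a) forces $\beta(f)=\beta(xy)$ (the alternative $\beta(f)\le\beta(xy)-1$ would give $\beta(xyzt)-\beta(f)\ge 1+\beta(zt)>1$), hence $\beta(zt)\in(1-\delta,1)$; this is the \emph{weak} part of Part~(3) fed into Part~(2), and it already rules out $n_4=1$ since that would force $\beta(zt)>1$. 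By Part~(2), $q_a=q_b=q_c=1$ rules out $n_1=n_2=n_3=1$, so $n=0$ and $\beta=\alpha_{k_0}$. The residue identities $\overline{ak_0}+\overline{bk_0}=\overline{ek_0}+r$ and $\overline{ck_0}+\overline{dk_0}=k_0$ then follow from $\beta(xy)\ge 1$ (and the mod~$r$ congruences) and $\beta(zt)<1$. Part~(4) is now immediate: for $\alpha_k\ne\beta,\beta'$ the two cases of Part~(1) both sum to $\overline{ek}+k+r$; for $\alpha_k=\beta$ Part~(3) gives the identity, and for $\alpha_k=\beta'$ the same identity is obtained from the symmetric analysis.

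The main obstacle is the subtle interplay between Parts~(2) and~(3): verifying $\alpha_k\ne\beta,\beta'$ inside Part~(2) relies on the structure of $\beta$, while Part~(3) in turn leans on the coprimalities from Part~(2). The delicate point is to isolate the weak part of Part~(3)---that $\beta(f)=\beta(xy)$ and $\beta(zt)<1$, which depends only on Rule~I(i) and observation~(a)---prove it first, feed it into the exclusion step of Part~(2), and only then complete the full Part~(3). A secondary pitfall is the Diophantine exclusion of $q_c,q_d>1$, where keeping careful track of the residues modulo $q_c$ and $q_d$ separately is essential.
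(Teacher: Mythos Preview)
Your proposal has a serious gap in Part~(3): you assert that the residue identities ``follow from $\beta(xy)\ge 1$'' but never prove $\beta(xy)\ge 1$. Your weak part correctly gives $\beta(f)=\beta(xy)$ and $\beta(zt)\in(1-\delta,1)$, but this is compatible with both $\overline{ak_0}+\overline{bk_0}=\overline{ek_0}$ and $\overline{ak_0}+\overline{bk_0}=\overline{ek_0}+r$. Ruling out the first alternative is the crux of Part~(3) and is not formal: in the paper it requires the non-canonical lemma (Lemma~\ref{non-canonical lemma}). Concretely, if $\beta(xy)<1$ one obtains $\sum_i\overline{a_ik_0}=\overline{ek_0}+k_0$ while Part~(1) gives $\sum_i\overline{a_ik}\ge\overline{ek}+r$ for every other $k$; these are precisely the hypotheses of that lemma (with (\ding{73}1) coming from $a+b\equiv e$), and its conclusion $k_0/r\le 1-\delta_0=1-\delta$ then contradicts $k_0/r>1-\delta$. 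The lemma in turn rests on the ACC for minimal log discrepancies of cyclic quotient singularities (Corollaries~\ref{acc quot 3} and~\ref{acc quot 5}), so this step genuinely imports outside input; nothing in Rules~I--III and your observations (a),(b) alone forces $\beta(xy)\ge 1$.

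There is a secondary gap in Part~(2). Your exclusion ``$\alpha_k\ne\beta,\beta'$ via the weak part'' does handle $\alpha_k=\beta$ (since $\beta(zt)\in(1-\delta,1)$ together with $\alpha_k(zt)\equiv k/r\bmod 1$ forces $k/r>1-\delta$, impossible for $k\le r/2$), but it does \emph{not} handle $\alpha_k=\beta'$: that case only yields $k/r<\delta$, which is no contradiction when the relevant modulus exceeds $1/\delta$. The paper resolves exactly this situation (for $k_1=r/q$) by passing to several auxiliary weightings $\alpha_{jk_1}$ with $j\in\{2,3,5\}$, an argument your ``parallel analysis'' does not supply. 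Separately, your detour to prove $q_a=q_b=1$ via weightings is unnecessary and inherits the same $\beta'$ issue: since $xy\in f$ gives $a+b\equiv e\bmod r$, Rule~II(i) yields $q_a\mid e$ and hence $q_a\mid b$, contradicting Rule~II(ii) directly with no reference to $\alpha_k$ at all.
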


\begin{proof}%need delta<delta5 and 1/6
(1) As $xy\in f$, $a+b\equiv e$, and $c+d\equiv 1 \bmod r$ by Rule II(iii). 
Since $a+b\equiv e \bmod r$, it is easy to see that $a$ and $b$ are coprime to $r$ by Rule II(i)(ii).

By  $a+b\equiv e \bmod r$, $\alpha(f)\equiv \alpha(xy) \bmod \ZZ$ for all $\alpha \in N\cap \sigma$. Fix any $\alpha \in N\cap \square$. Since $0\leq \alpha(f)\leq \alpha(xy)< 2$, either $\alpha(f)=\alpha(xy)$ or $\alpha(f)=\alpha(xy)-1$. 
Note that $\alpha(xy)< 2$ because otherwise $\alpha(x)=\alpha(y)=1$, which contradicts the fact that $\gcd(a,b,r)=1$ by Rule II(ii). By Rule I, if $\alpha(f)=\alpha(xy)$ and $\alpha \neq \beta$, then $\alpha(zt)>1$.

Suppose that $\alpha \neq \beta , \beta'$, certainly $\alpha' \neq \beta , \beta'$. There are two cases: (i) $\alpha(f)=\alpha(xy)$; (ii) $\alpha(f)=\alpha(xy)-1$.

\medskip

\noindent{\bf Case (ii)}: Assume that $\alpha(f)=\alpha(xy)-1$, then $\alpha(xy)\geq 1$.

If $ \alpha(xy)=1$, then $\alpha(f)=0$ and there is a monomial in $f$ with weight $0$.  None of $\alpha(x), \alpha(y)$ is $0$ since $a$ and $b$ are coprime to $r$, so one of $\alpha(z), \alpha(t)$ is $0$, and in this case $\alpha(zt)<1$ holds. 

If $ \alpha(xy)>1$, then  $\alpha'(xy)< 1$, and  hence  $\alpha'(f)=\alpha'(xy)$. This implies that $\alpha'(zt)>1$ by Rule I and hence $\alpha(zt)<1$. This proves (ii).

\medskip

\noindent{\bf Case (i)}: Assume that $\alpha(f)=\alpha(xy)$, then $\alpha(zt)>1$ by Rule I. 
Suppose that $\alpha(xy)>1$, then $\alpha'(xy)<1$ which implies that $\alpha'(f)=\alpha'(xy)$ and $\alpha'(zt)>1$, which contradicts $\alpha(zt)>1$. Hence  $\alpha(xy)\leq 1$. 
On the other hand, if $\alpha(xy)=1$, then the same argument implies that $\alpha'(xy)=1$ and $\alpha'(f)=0$. By case (ii), one of $\alpha'(z), \alpha'(t)$ is $0$, which implies that  one of $\alpha(z), \alpha(t)$ is $1$. This proves (i).

\medskip

% It suffices to show that neither $\alpha(f)=\alpha(xy)\geq 1$ and $\alpha(zt)>1$;
%nor $\alpha(f)=\alpha(xy)-1$ and $\alpha(zt)\geq 1$ can happen. Note that in either case, $\alpha(xy)\geq 1$, which implies that $\alpha'(xy)\leq 1$, where $\alpha'=(1, \ldots, 1)-\alpha$. Hence $\alpha'(f)=\alpha'(xy)$ and $\alpha'(zt)>1$. But this implies that $\alpha(zt)<1$, a contradiction.
Therefore, the former part of statement (1) is proved. Note that $\alpha(zt)<1$ if and only if $\alpha'(zt)>1$, so the alternative cases are interchanged by the symmetry.

For the latter part, note that $\alpha_k(xy)=\frac{1}{r} (\overline{ak}+\overline{bk})\equiv \frac{1}{r}\overline{ek} \bmod \ZZ$ and 
$\alpha_k(zt)=\frac{1}{r}(\overline{ck}+\overline{dk})\equiv \frac{k}{r} \bmod \ZZ$.
If $\alpha_k$ is in case (i), then $\alpha_k(xy)\neq 1$ since  $\alpha_k(z), \alpha_k(t)<1$, therefore $\alpha_k(xy)<1$ and $1<\alpha_k(zt)<2$, which gives the first equation. If $\alpha_k$ is in case (ii), then $1\leq \alpha_k(xy)<2$  and $\alpha_k(zt)<1$, which gives the second equation.

% The second case is similar.

\bigskip

Before proving (2), we note that if $\beta\in N\cap \square$, then $\beta(f)=\beta(xy)$, because otherwise $\beta(f)=\beta(xy)-1$, and $\beta(zt)<0$ by Rule I, which is absurd. It follows that $1-\delta<\beta(zt)<1$ by Rule I. 

\bigskip

(2) Since $a+b\equiv e \bmod r$, it is easy to see that $a$ and $b$ are coprime to $r$ by Rule II(i)(ii). By Rule II(i), $\gcd(c,r)$ and $\gcd(d,r)$ divide $q=\gcd(e,r)$. So by Rule II(ii), it suffices to show that $q$ divides either $c$ or $d$. We may assume that $q>1$, and set $k_1=r/q \leq r/2$. If either $\overline{ck_1}=0$ or $\overline{dk_1}= 0$, then $q$ divides 
either $c$ or $d$, and we are done. So we may assume that $\overline{ck_1}\neq 0, \overline{dk_1}\neq 0$ and try to get a contradiction. In particular this means that $\alpha_{r-k_1}=\alpha'_{k_1}$. We need to consider 3 cases: $\alpha_{k_1}\neq \beta, \beta'$; $\alpha_{k_1}= \beta$; $\alpha_{k_1}= \beta'$.

If $\alpha_{k_1}\neq \beta, \beta'$, then also $\alpha_{r-k_1}\neq \beta, \beta'$. Hence by $\overline{ek_1}=\overline{e(r-k_1)}=0$, we are in the second case of (1), that is,
\[
 \overline{ck_1}+\overline{dk_1}=k_1 \text{ and } \overline{c(r-k_1)}+\overline{d(r-k_1)}=r-k_1,
\]
but this is absurd, since the sum of the left hand sides of the equations above should be $2r$ as $\overline{ck_1}\neq 0, \overline{dk_1}\neq 0$.

If $\alpha_{k_1}= \beta$, then $1-\delta<\beta(zt)<1$ implies that 
\[
1-\delta<\frac{\overline{ck_1}+\overline{dk_1}}{r}=\frac{k_1}{r}<1.
\]
But this contradicts $k_1\leq r/2$.

If $\alpha_{k_1}= \beta'$, then $\alpha_{r-k_1}= \beta$, and $1-\delta<\beta(zt)<1$ implies that 
\[
1-\delta<\frac{\overline{c(r-k_1)}+\overline{d(r-k_1)}}{r}=\frac{r-k_1}{r}<1.
\]
This means that $k_1/r<\delta\leq \frac{1}{6}$ and hence $q>6$. For $j=2,3,5$, we may consider $jk_1=jr/q<r$ and consider the weighting $\alpha_{jk_1}$. Note that by the construction, $k_1<jk_1<r-k_1$, hence $\alpha_{jk_1}\neq \beta, \beta'$ for $j=2,3,5$ (same holds for $\alpha_{r-jk_1}$). Hence by $\overline{ejk_1}=0$, we are in the second case of (1), that is,
\[
 \overline{cjk_1}+\overline{djk_1}= jk_1 \text{ and } \overline{c(r-jk_1)}+\overline{d(r-jk_1)}=r- jk_1.
\]
Since the sum of right hand sides of the equations above  is $r$, either $\overline{cjk_1}= 0$ or $\overline{djk_1}=0$ for each $j=2,3,5$. After possibly interchanging $z,t$, we may assume that $\overline{djk_1}= 0$ holds for at least two $j\in \{2,3,5\}$, but this implies that $\overline{dk_1}= 0$, a contradiction.

\bigskip

%%%%%%%%%%%%%%%%%%%%%%%%%%%%%%%%%%%%%%%%%%%%%
(3) Now suppose that $\beta\in N\cap \square$. Recall that 
$\beta(f)=\beta(xy)$ and $1-\delta<\beta(zt)<1$.

First we show that there exists an integer $1\leq k_0\leq r-1$ such that $\beta=\alpha_{k_0}$.
Note that by definition there exists an integer $1\leq k_0\leq r-1$ such that $\beta\equiv\alpha_{k_0} \bmod \ZZ$. %$1-\delta<\beta(zt)<1$ implies that $k_0\neq 0$ and $\beta(z)\neq 1$, $\beta(t)\neq 1$. 
Since $a, b$ are coprime to $r$ by (2), $\beta(x),\beta(y)$ are not $1$, which means that $\beta=\alpha_{k_0}$. %To show that $\beta'=\alpha_{r-k_0}$, it suffices to show that $\beta(t)\neq 0$. Assume, to the contrary, that $\beta(t)= 0$, then $\overline{dk_0}=0$, 
Note that in this case 
\[
1-\delta<\beta(zt)=\frac{1}{r}(\overline{ck_0}+\overline{dk_0})=\frac{k_0}{r}<1.
\]

Then we will show that $\beta(xy)\geq 1$. 
Suppose that $\beta(xy)<1$, then we know that 
$
 \overline{ak_0}+\overline{bk_0}=\overline{ek_0}.$ Hence 
 \[
 \overline{ak_0}+\overline{bk_0}+\overline{ck_0}+\overline{dk_0}=\overline{ek_0}+k_0.
 \]
On the other hand, for any $1\leq k\leq r-1$ such that $k\neq k_0$, if $\alpha_k\neq \beta'$, then by (1), 
 \[
 \overline{ak}+\overline{bk}+\overline{ck}+\overline{dk}=\overline{ek}+r+k;
 \]
if $\alpha_k= \beta'$, then 
 \[
 \overline{ak}+\overline{bk}+\overline{ck}+\overline{dk}=4r-\overline{ek_0}-k_0\geq 2r.
 \]
Hence $\frac{1}{r}(a,b,c,d;e)$ and $k_0$ satisfy the assumption of Lemma \ref{non-canonical lemma},  which implies that $k_0/r\leq  1-\delta$, a contradiction. 
Here %(\ding{73}0) is Rule II(iii),  
the coprimeness follows from (2) and (\ding{73}1) of Lemma \ref{non-canonical lemma} is satisfied.
This concludes (3).

\bigskip

(4) For any $k=1, \ldots, r-1$, if $\alpha_k\neq \beta'$, then the statement follows from (1) and (3). 
 If $\alpha_k= \beta'=\alpha'_{k_0}$, then $k=r-k_0$; also we know that $\overline{ek_0}\neq 0$, because otherwise $\overline{dk_0}= 0$ by (2), which contradicts $\alpha_k=\alpha'_{k_0}$; therefore,
\[
 \overline{ak}+\overline{bk}+\overline{ck}+\overline{dk}=4r-\overline{ek_0}-k_0-r=\overline{e(r-k_0)}+r-k_0+r.
 \]
 This concludes (4).
\end{proof}

\begin{prop}\label{prop x2}
Suppose that $x^2\in f$. Then the followings hold.
\begin{enumerate}
\item For any $\alpha \in N\cap \square$ such that $\alpha\neq \beta, \beta'$, 
one of the followings holds:
\begin{enumerate}
\item[(i)] $\alpha(f)=2\alpha(x)\leq 1$ and $\alpha(yzt)>1+\alpha(x)$, moreover, if $2\alpha(x)=1$, then $(\alpha(y), \alpha(z), \alpha(t))$ is a permutation of $(\frac{1}{2}, \frac{1}{2}, 1)$;
\item[(ii)] $\alpha(f)=2\alpha(x)-1$ and $\alpha(yzt)<1+\alpha(x)$,  moreover,  if $2\alpha(x)=1$, then $(\alpha(y), \alpha(z), \alpha(t))$ is a permutation of $(\frac{1}{2}, \frac{1}{2}, 0)$.
\end{enumerate}
%either $\alpha(f)=2\alpha(x)<1$ and $\alpha(yzt)>1+\alpha(x)$;
%or $\alpha(f)=2\alpha(x)-1$ and $\alpha(yzt)<1+\alpha(x)$. 
The alternative cases are interchanged by the symmetry $\alpha\mapsto \alpha'=(1,\ldots, 1)-\alpha$.
 In particular, for $k=1, \ldots, r-1$, if $\alpha_k\neq \beta, \beta'$, then these two cases imply
 \[
2 \overline{ak}=\overline{ek} \text{ and } \overline{bk}+\overline{ck}+\overline{dk}=\overline{ak}+k+r
 \]
 or
 \[
2 \overline{ak}=\overline{ek}+r \text{ and } \overline{bk}+\overline{ck}+\overline{dk}=\overline{ak}+k
 \]
 respectively.
 
 \item One of the followings holds (after possibly interchanging $y, z,t$):
 \begin{enumerate}
 \item $a\equiv e\equiv 0 \bmod r$ and $b,c,d$ are coprime to $r$;
 \item $r$ is odd and $a,b,c,d,e$ are coprime to $r$;
 \item $q=2=\gcd(d,r)=\gcd(e,r)$ and $a, b,c$ are coprime to $r$.
 \end{enumerate}
 
 \item If $\beta \in N\cap \square$, then there exists an integer $1\leq k_0\leq r-1$ such that $\beta\equiv \alpha_{k_0} \bmod \ZZ^4$. %Moreover, $2\beta(x)\geq 1$ and $1-\delta<\beta(yzt)-\beta(x)<1$. In particular, 
 %\[
% 2\overline{ak_0}=\overline{ek_0}+r \text{ and } \overline{bk_0}+\overline{ck_0}+\overline{dk_0}=\overline{ak_0}+k_0.
% \]
 Moreover, $1-\delta<\frac{k_0}{r}<1$ and
 \[
 \overline{ak_0}+\overline{bk_0}+\overline{ck_0}+\overline{dk_0}=\overline{ek_0}+k_0+r. 
 \]

 \item For any $k=1, \ldots, r-1$,
 \[
 \overline{ak}+\overline{bk}+\overline{ck}+\overline{dk}=\overline{ek}+k+r. 
 \]
\end{enumerate}

\end{prop}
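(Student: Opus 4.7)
The plan is to mirror Proposition \ref{prop xy}, with $x^2$ (and the congruence $2a \equiv e \bmod r$) replacing $xy$ (and $a+b \equiv e \bmod r$); the four parts are proved in the same order, since the coprimeness conclusions of (2) are needed before invoking Lemma \ref{non-canonical lemma} in (3). For part (1), the identity $\alpha(f) \equiv 2\alpha(x) \bmod \ZZ$ combined with $0 \leq \alpha(f) \leq 2\alpha(x) < 2$ yields the dichotomy $\alpha(f) \in \{2\alpha(x), 2\alpha(x) - 1\}$. In case (i), Rule I directly gives $\alpha(yzt) > 1 + \alpha(x)$; the bound $\alpha(yzt) < 1 + \alpha(x)$ in case (ii), and symmetrically the bound $2\alpha(x) \leq 1$ in case (i), follow by applying the case (i) bound to $\alpha' = (1,1,1,1) - \alpha$, which is legitimate because $\alpha \neq \beta, \beta'$ forces $\alpha' \neq \beta, \beta'$. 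The boundary case $2\alpha(x) = 1$ in case (ii) is subtler: $\alpha(f) = 0$ forces a monomial of $f$ to be supported entirely on $\alpha$-weight-zero coordinates; Rule II(ii) forbids more than one of $\alpha(y), \alpha(z), \alpha(t)$ from vanishing, so exactly one does, and the remaining two entries are pinned to $\tfrac{1}{2}$ by combining $\overline{aj} = r/2$ (forcing $r$ even and $\overline{ej} = 0$) with Rule II(iii) and $\alpha \in \square$. The case (i) boundary statement then follows by the symmetry $\alpha \leftrightarrow \alpha'$. Evaluating at $\alpha = \alpha_k$ yields the stated residue identities.

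For part (2), I mimic Proposition \ref{prop xy}(2): assume $q = \gcd(e, r) > 1$, set $k_1 = r/q \leq r/2$, and analyse $\alpha_{k_1}, \alpha_{r - k_1}$ (and $\alpha_{jk_1}$ for $j = 2, 3, 5$ if $k_1/r \leq \delta$) using part (1) plus the roles of $\beta, \beta'$. The three-way split arises because $q = \gcd(2a, r)$ admits three possibilities: (b) $q = 1$ with all of $a, b, c, d$ coprime to $r$ (forcing $r$ odd), (c) $q = 2$ with $\gcd(a, r) = 1$ (so $2 \mid r$ and $d$ is the non-coprime variable), and (a) the degenerate $q = r$ case $a \equiv e \equiv 0 \bmod r$. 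For part (3), coprimeness from (2) gives $\beta \equiv \alpha_{k_0} \bmod \ZZ^4$ for some $1 \leq k_0 \leq r - 1$. The dichotomy $\beta(f) \in \{2\beta(x), 2\beta(x) - 1\}$ combined with Rule I at $\beta$ implies $k_0/r \in (1 - \delta, 1)$ together with $\sum \overline{a_i k_0} \in \{\overline{ek_0} + k_0, \overline{ek_0} + k_0 + r\}$. If the first value holds, part (1) provides $\sum \overline{a_i k} \geq \overline{ek} + r$ for $k \neq k_0$, and Lemma \ref{non-canonical lemma} applies---in cases (b) and (c) via condition (\ding{73}3) (since $2a \equiv e$ and $q \leq 2$, with $a_1 = a$ coprime to $r$), and in case (a) via condition (\ding{73}2) (since $2a \equiv e \equiv 0$, with $a_4 = a$)---yielding $k_0/r \leq 1 - \delta$, a contradiction. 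Hence $\sum \overline{a_i k_0} = \overline{ek_0} + k_0 + r$, as required.

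Part (4) then follows as in Proposition \ref{prop xy}(4): for $k \neq r - k_0$, $\alpha_k \neq \beta'$ and the identity comes from (1) or (3); when $\alpha_k = \beta'$, the identity is verified by a direct residue computation analogous to that in Proposition \ref{prop xy}(4), using the formula relating $\overline{a_i(r - k_0)}$ to $\overline{a_i k_0}$. The main obstacle is the boundary-case analysis $2\alpha(x) = 1$ in part (1): determining the exact permutation of $(\alpha(y), \alpha(z), \alpha(t))$ is strictly stronger than the ``one coordinate vanishes'' conclusion in Proposition \ref{prop xy}, and pinning down all three entries requires carefully combining the lattice structure of $\alpha$, the forced evenness of $r$ via $\overline{aj} = r/2$, and Rule II(ii)/(iii) to eliminate the alternatives.
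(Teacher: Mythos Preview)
Your outline for parts (1), (3), and (4) tracks the paper closely. The genuine gap is in part (2), which is substantially harder than its analogue in Proposition~\ref{prop xy} and does not follow by direct mimicry.

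The trichotomy $q=\gcd(e,r)\in\{1,2,r\}$ is not automatic: since $e\equiv 2a$, if $\gcd(a,r)=q_1>1$ then $q\in\{q_1,2q_1\}$, and nothing a priori forces $q_1=r$. The paper must first prove that $\gcd(a,r)>1$ implies $\gcd(e,r)=\gcd(a,r)$; this uses $k_1=r/(2q_1)$ (not $r/q$) and the multiples $jk_1$ for $j=3,5,7,11$ rather than $j=2,3,5$. The reason is that $\overline{ak_1}=r/2$, so $\overline{ajk_1}=r/2$ only for \emph{odd} $j$, and four odd primes are needed so that pigeonhole lands two vanishing conditions on the same coordinate among $y,z,t$. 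Your choice $j=2$ fails here because it sends $\overline{ajk_1}$ to $0$, altering the structure of $\alpha_{jk_1}$. Separately, in case (c) the claim that exactly one of $b,c,d$ is even also requires an argument: the paper uses $\alpha_{r/2}=(\tfrac12,\tfrac12,\tfrac12,\tfrac12)$ together with part (1) to rule out all three being odd.

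Even after this work, one only obtains the weaker conclusion (a$'$): $\gcd(e,r)=\gcd(a,r)>1$ with $b,c,d$ coprime to $r$. This already suffices for the coprimeness hypotheses of Lemma~\ref{non-canonical lemma} and for condition (\ding{73}2) (labeling $a_4=a$), so parts (3) and (4) go through using (a$'$), (b), (c). The full statement (a), namely $a\equiv e\equiv 0$, is obtained only \emph{afterwards}: once (4) is established, the terminal lemma (Theorem~\ref{terminal lemma}) gives $a\equiv e\bmod r$, which combined with $2a\equiv e$ forces $a\equiv 0$. Your proposal treats (a) as available before (3), reversing this logical dependency.
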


\begin{proof}
(1) As $x^2\in f$, $2a\equiv e \bmod r$, and $b+c+d\equiv 1+a \bmod r$ by Rule II(iii). From the former one, $\alpha(f)\equiv 2\alpha(x) \bmod \ZZ$ for all $\alpha \in N\cap \sigma$. Fix any $\alpha \in N\cap \square$. Since $0\leq \alpha(f)\leq 2\alpha(x)\leq 2$, either $\alpha(f)=2\alpha(x)$ or $\alpha(f)=2\alpha(x)-1$. Note that here $\alpha(f)=2\alpha(x)-2$ is impossible since otherwise $\alpha(f)=0$ and $\alpha(x)=1$, but $\alpha(f)=0$ implies that at least one of $\alpha(y), \alpha(z), \alpha(t)$ is 0, and hence $x$ and one of $y,z,t$ have a common factor with $r$, which contradicts Rule II(ii).
By Rule I, if $\alpha(f)=2\alpha(x)$ and $\alpha \neq \beta$, then $\alpha(yzt)>1+\alpha(x)$.

Suppose that $\alpha \neq \beta , \beta'$, then $\alpha' \neq \beta , \beta'$. There are two cases: 
(i) $\alpha(f)=2\alpha(x)$; (ii) $\alpha(f)=2\alpha(x)-1$.

\medskip

\noindent{\bf Case (ii)}: Assume that $\alpha(f)=2\alpha(x)-1$, then $2\alpha(x)\geq 1$.

If $2\alpha(x)= 1$, then $\alpha(f)=0$ and hence one of $\alpha(y), \alpha(z), \alpha(t)$ is 0, say $\alpha(t)$. Recall that there exists an integer $1\leq k\leq r-1$ such that $\alpha\equiv \alpha_k \bmod \ZZ^4$. Then  $2\alpha(x)=1$ and $\alpha(t)=0$ implies that $\overline{2ak}=\overline{dk}=0$. Since $\gcd(a, d, r)=1$, this implies that $d$ and $r$ are even and $k=\frac{r}{2}$.
Since $\gcd(b, d, r)=\gcd(c, d, r)=1$,  $\overline{bk}=\overline{ck}=\frac{1}{2}$. This means that $(\alpha(y),\alpha(z), \alpha(t))=(\frac{1}{2}, \frac{1}{2}, 0)$, and in this case  $\alpha(yzt)<1+\alpha(x)$ holds. 

If $2\alpha(x)> 1$, then $2\alpha'(x)< 1$ and hence $\alpha'(f)=2\alpha'(x)$. This implies that  $\alpha'(yzt)>1+\alpha'(x)$ by Rule I and hence $\alpha(yzt)<1+\alpha(x)$. This proves (ii).

\medskip 

\noindent{\bf Case (i)}: Assume that $\alpha(f)=2\alpha(x)$, then 
 $\alpha(yzt)>1+\alpha(x)$ by Rule I.
 Suppose that $2\alpha(x)>1$, then $2\alpha'(x)<1$ which implies that $\alpha'(f)=2\alpha'(x)$, and $\alpha'(yzt)>1+\alpha'(x)$ by Rule I, but this contradicts  $\alpha(yzt)>1+\alpha(x)$. Hence $2\alpha(x)\leq 1$. On the other hand, if $2\alpha(x)=1$, then the same argument implies that 
$\alpha'(f)=0$ and $2\alpha'(x)=1$. By case (ii), $(\alpha'(y),\alpha'(z), \alpha'(t))=(\frac{1}{2}, \frac{1}{2}, 0)$ after permutation, and this proves (i).

\medskip

%It suffices to show that neither $\alpha(f)=2\alpha(x)\geq 1$ and $\alpha(yzt)>1+\alpha(x)$;
%nor $\alpha(f)=2\alpha(x)-1$ and $\alpha(yzt)\geq 1+\alpha(x)$ can happen. Note that in either case, $2\alpha(x)\geq 1$, which implies that $2\alpha'(x)\leq 1$, where $\alpha'=(1, \ldots, 1)-\alpha$. Hence $\alpha'(f)=2\alpha'(x)$ and $\alpha'(yzt)>1+\alpha(x)$. But this implies that $\alpha(yzt)<1+\alpha(x)$, a contradiction. 

Hence the former part of statement (1) is proved. Note that $\alpha(yzt)>1+\alpha(x)$ if and only if $\alpha'(yzt)<1+\alpha'(x)$, so the alternative cases are interchanged by the symmetry. 

The latter part follows easily by the fact that $2\alpha_k(x)=\frac{1}{r} (2\overline{ak})\equiv \frac{1}{r}\overline{ek} \bmod \ZZ$ and 
$\alpha_k(yzt)=\frac{1}{r}(\overline{bk}+\overline{ck}+\overline{dk})\equiv \frac{1}{r}(k+\overline{ak}) \bmod \ZZ$. To be more precise, if $\alpha_k$ is in case (i), we get that 
$2\alpha_k(x)=\frac{\overline{ek}}{r}<1$, and $\alpha_k(yzt)= \frac{1}{r}(k+\overline{ak})+1$ or $\frac{1}{r}(k+\overline{ak})+2$. We need to show that $\alpha_k(yzt)= \frac{1}{r}(k+\overline{ak})+2$ can not happen. Suppose that it happens, then $\alpha'_k(yzt)=1- \frac{1}{r}(k+\overline{ak})$, $\alpha'_k(x)=1-\frac{\overline{ak}}{r}$, and $\alpha'_k(f)=2\alpha'_k(x)-1=1-\frac{2\overline{ak}}{r}$, but this contradicts Rule I.
So this case we get the first equation. %The second case is similar.
 If $\alpha_k$ is in case (ii), we get that 
$2>2\alpha_k(x)\geq 1$, and $\alpha_k(yzt)= \frac{1}{r}(k+\overline{ak})$ or $\frac{1}{r}(k+\overline{ak})-1$. Hence $2\alpha_k(x)=\frac{\overline{ek}}{r}+1$. Here note that $\alpha_k(yzt)= \frac{1}{r}(k+\overline{ak})-1$ can not happen because it contradicts the fact that
$\alpha_k(f)=\frac{2\overline{ak}}{r}-1$ and Rule I. So this case we get the second equation.

\bigskip

Note that if $\beta\in N\cap \square$, then we also have either $\beta(f)=2\beta(x)$ and $1-\delta<\beta(yzt)-\beta(x)<1$; or $\beta(f)=2\beta(x)-1$, and $-\delta<\beta(yzt)-\beta(x)<0$ by Rule I. In particular, if $\beta\equiv \alpha_k \bmod \ZZ^4$, then $\beta(yzt)-\beta(x)\equiv \frac{k}{r}\bmod \ZZ$, which implies that $1-\delta< \frac{k}{r}<1$ in both cases.

\bigskip

(2) First we show that if $\gcd(a,r)\neq 1$, then $\gcd(e,r)=\gcd(a,r)$ and $b, c, d$ are coprime to $r$. 
Suppose that $\gcd(a,r)=q_1> 1$ and $\gcd(e,r)\neq \gcd(a,r)$, then since $e\equiv 2a\bmod r$, we know that $\gcd(e,r)=2q_1$. In particular, $r$ is even. Take $k_1=\frac{r}{2q_1}$, then $\overline{ek_1}=0$ and $\overline{ak_1}=\overline{a(r-k_1)}=\frac{r}{2}$. Note that by Rule II(ii), $q_1$ does not divide $b,c,d$, hence $\overline{bk_1}, \overline{ck_1}, \overline{dk_1}$ can not be $0$ or $\frac{r}{2}$. In particular, $\alpha'_{k_1}=\alpha_{r-k_1}$.
We need to consider $3$ cases: $\alpha_{k_1}\neq \beta, \beta'$; $\alpha_{k_1}= \beta$; $\alpha_{k_1}= \beta'$.

If $\alpha_{k_1}\neq \beta, \beta'$, then $\alpha_{r-k_1}\neq \beta, \beta'$, and we are in the second case of (1), which gives 
\[
 \overline{bk_1}+\overline{ck_1}+\overline{dk_1}=\frac{r}{2}+k_1
\]
and
\[
 \overline{b(r-k_1)}+\overline{c(r-k_1)}+\overline{d(r-k_1)}=\frac{r}{2}+r-k_1.
\]
But this is absurd since the sum of the left hand sides of the equations above  is $3r$.

If $\alpha_{k_1}= \beta$, %then $\beta(x)=\frac{1}{2}$ and $\beta(yzt)-\beta(x)\equiv \frac{k_1}{r}\bmod \ZZ$. Recall that if $\beta\in N\cap \square$, then we also have either $\beta(f)=2\beta(x)$ and $1-\delta<\beta(yzt)-\beta(x)<1$; or $\beta(f)=2\beta(x)-1$, and $-\delta<\beta(yzt)-\beta(x)<0$ by Rule I. Hence in either case, 
we get $1-\delta <\frac{k_1}{r}<1$. But this implies that $\delta>\frac{3}{4}$ since $k_1\leq \frac{r}{4}$, which is a contradiction.

If $\alpha_{k_1}= \beta'$, then $\alpha_{r-k_1}=\beta$, and %the same argument as above implies that 
$1-\delta <\frac{r-k_1}{r}<1$.
In particular, $\frac{k_1}{r}<\delta\leq \frac{1}{12}$ and hence $2q_1>12$. For $j=3,5,7,11$, we may consider $jk_1=\frac{jr}{2q_1}<r$ and consider the weighting $\alpha_{jk_1}$. Note that by the construction, $k_1<jk_1<r-k_1$, hence $\alpha_{jk_1}\neq \beta,  \beta'$ for $j=3,5,7,11$ (same holds for $\alpha_{r-jk_1}$). Hence by $\overline{ajk_1}=\frac{r}{2}$ and $\overline{ejk_1}=0$, we are in the second case of (1), that is,
\[
 \overline{bjk_1}+\overline{cjk_1}+\overline{djk_1}=\frac{r}{2}+jk_1
 \]
 and 
 \[
 \overline{b(r-jk_1)}+\overline{c(r-jk_1)}+\overline{d(r-jk_1)}=\frac{r}{2}+r-jk_1.
\]
Since the sum of the right hand sides of the equations above  is $2r$, one of $\overline{bjk_1}$, $\overline{cjk_1}$, $\overline{djk_1}$ is 0 for each $j=3,5,7,11$. After possibly interchanging $y,z,t$, we may assume that $\overline{bjk_1}= 0$ for at least two $j\in \{3,5,7,11\}$. But as these two $j$'s are coprime, this implies that $\overline{bk_1}= 0$, a contradiction.

Therefore we showed that if $\gcd(a,r)\neq 1$, then $\gcd(e,r)=\gcd(a,r)$. In this case $b, c, d$ are coprime to $r$ by Rule II(i)(ii).

Now assume that $\gcd(a,r)= 1$. Since $e\equiv 2a \bmod r$, it follows that $\gcd(e, r)=1$ if $r$ is odd, and $\gcd(e, r)=2$ if $r$ is even. Moreover, in the first case, $a, b,c,d$ are coprime to $r$ by Rule II(i). 

Now consider $r$ is even and $\gcd(e, r)=2$. Note that at most one of $b,c,d$ is even by Rule II(ii). 
Suppose that none of them is even.
Set $k_2=\frac{r}{2}$. Then $\overline{ek_2}=0$ and $\overline{ak_2}=\overline{bk_2}=\overline{ck_2}=\overline{dk_2}=\frac{r}{2}$. That is, $\alpha_{k_2}=(\frac{1}{2},\frac{1}{2},\frac{1}{2},\frac{1}{2})$. Note that $\alpha_{k_2}\neq \beta, \beta'$, otherwise $\beta=\beta'$, $\beta(xyzt)=2$, and $2\beta(f)\in \ZZ$, which contradicts Rule I and $\delta<\frac{1}{2}$. %Hence we are in the second case of (1), but it is easy to check that  the second equation is not satisfied, a contradiction. 
Then we get a contradiction since $\alpha_{k_2}(yzt)=\alpha_{k_2}(x)+1$, which violates both situations in (1).
Hence exactly one of $b,c,d$ is even, say $d$, and in this case $\gcd(d, r)=\gcd(e, r)=2$.

In summary, we showed that one of the followings holds:
\begin{enumerate}
 \item[(a')] $\gcd(e,r)=\gcd(a,r)\neq 1$ and $b, c, d$ are coprime to $r$;
 \item[(b)] $r$ is odd and $a,b,c,d,e$ are coprime to $r$;
 \item[(c)] $q=2=\gcd(d,r)=\gcd(e,r)$ and $a, b,c$ are coprime to $r$.
 \end{enumerate}
 To conclude statement (2), we only need to prove that if $\gcd(a,r)\neq 1$ then $a\equiv 0 \bmod r$. But we will come back to this after proving (3) and (4).

\bigskip

%STOP%%%%%%%%%%%%%%%%%%%%%%%%%%%%%%%%%%%%%%%%%%%%
(3) Now suppose that $\beta\in N\cap \square$. Recall that either $\beta(f)=2\beta(x)$ and $1-\delta<\beta(yzt)-\beta(x)<1$; or $\beta(f)=2\beta(x)-1$, and $-\delta<\beta(yzt)-\beta(x)<0$ by Rule I.

Note that there exists an integer $1\leq k_0\leq r-1$ such that $\beta\equiv \alpha_{k_0} \bmod \ZZ$.
%Note that by definition there exists an integer $0\leq k_0\leq r-1$ such that $\beta\equiv\alpha_{k_0} \bmod \ZZ$. Note that either $1-\delta<\beta(yzt)-\beta(x)<1$ or $-\delta<\beta(yzt)-\beta(x)<0$ gives the inequality $1-\delta<\frac{k_0}{r}<1$. In particular, $k_0\neq 0$.

If $\alpha_{k_0}\neq \beta, \beta'$, then we get the desired equality by (1).
If $\alpha_{k_0}=\beta'$, then $\beta\equiv \beta' \bmod \ZZ^4$, which implies that $\beta(x), \beta(y), \beta(z), \beta(t)\in \{0, \frac{1}{2}, 1\}$, and $2(\beta(yzt)-\beta(x))$ is an integer, but contradicts   $\delta<\frac{1}{2}$. 

Hence we may assume that $\alpha_{k_0}=\beta$. Recall that $\frac{k_0}{r}\equiv \beta(yzt)-\beta(x) \bmod \ZZ$ and $1-\delta <\frac{k_0}{r}<1$.

Suppose that $\beta(f)=2\beta(x)-1$, and $-\delta<\beta(yzt)-\beta(x)<0$. Then $2\beta(x)\geq 1$ and this implies that 
\[
2 \overline{ak_0}=\overline{ek_0}+r \text{ and } \overline{bk_0}+\overline{ck_0}+\overline{dk_0}=\overline{ak_0}+k_0-r.
\]
This gives
 \[
 \overline{ak_0}+\overline{bk_0}+\overline{ck_0}+\overline{dk_0}=\overline{ek_0}+k_0.
 \]
On the other hand, for any $1\leq k\leq r-1$ such that $k\neq k_0$, if $\alpha_k\neq \beta'$, then by (1), 
 \[
 \overline{ak}+\overline{bk}+\overline{ck}+\overline{dk}=\overline{ek}+r+k;
 \]
if $\alpha_k= \beta'$, then 
 \[
 \overline{ak}+\overline{bk}+\overline{ck}+\overline{dk}=4r-\overline{ek_0}-k_0\geq 2r.
 \]
Hence $\frac{1}{r}(a,b,c,d;e)$ and $k_0$ satisfy the assumption of Lemma \ref{non-canonical lemma} after possibly relabeling $a,b,c,d$ properly, which implies that $k_0/r\leq 1-\delta$, a contradiction.  Here  %(\ding{73}0) is Rule II(iii); 
the coprimeness is guaranteed by one of (a')(b)(c) in the part of (2) we just proved; (\ding{73}2) of Lemma \ref{non-canonical lemma} is satisfied in case (a') by labeling $a=a_4$;  (\ding{73}3)  of Lemma \ref{non-canonical lemma} is satisfied in cases (b)(c).

Suppose that $\beta(f)=2\beta(x)$, and $1-\delta<\beta(yzt)-\beta(x)<1$. Note that in this case, $\beta(yzt)-\beta(x)=\frac{k_0}{r}$, and hence 
 \[
 \overline{bk_0}+\overline{ck_0}+\overline{dk_0}=\overline{ak_0}+k_0.
 \]
Then we will show that $2\beta(x)\geq 1$. 
Suppose that $2\beta(x)<1$, then we know that 
$
 2\overline{ak_0}=\overline{ek_0}$ and 
 \[
\overline{ak_0}+ \overline{bk_0}+\overline{ck_0}+\overline{dk_0}=\overline{ek_0}+k_0.
 \]
On the other hand, for any $1\leq k\leq r-1$ such that $k\neq k_0$, if $\alpha_k\neq \beta'$, then by (1), 
 \[
 \overline{ak}+\overline{bk}+\overline{ck}+\overline{dk}=\overline{ek}+r+k;
 \]
if $\alpha_k= \beta'$, then 
 \[
 \overline{ak}+\overline{bk}+\overline{ck}+\overline{dk}=4r-\overline{ek_0}-k_0\geq 2r.
 \]
Hence $\frac{1}{r}(a,b,c,d;e)$ and $k_0$ satisfy the assumption of Lemma \ref{non-canonical lemma} after possibly relabeling $a,b,c,d$ properly, which implies that $k_0/r\leq 1-\delta$, a contradiction. Here  %(\ding{73}0) is Rule II(iii); 
the coprimeness is guaranteed by one of (a')(b)(c) in the part of (2) we just proved; (\ding{73}2) of Lemma \ref{non-canonical lemma} is satisfied in case (a') by labeling $a=a_4$;  (\ding{73}3)  of Lemma \ref{non-canonical lemma} is satisfied in cases (b)(c). 

Hence $2\beta(x)\geq 1$ and we have 
$
 2\overline{ak_0}=\overline{ek_0}+r$. % and 
% \[
% \overline{bk_0}+\overline{ck_0}+\overline{dk_0}=\overline{ak_0}+k_0.
% \]
This concludes (3).

\bigskip

(4) For any $k=1, \ldots, r-1$, if $\alpha_k\neq \beta'$, then the statement follows from (1) and (3). 
 If $\alpha_k= \beta'=\alpha'_{k_0}$, then $k=r-k_0$; also we know that $\overline{ek_0}\neq 0$, because otherwise $\overline{ak_0}= 0$ or $\overline{dk_0}= 0$ by case (a') or (c) of (2), which contradicts $\alpha_k=\alpha'_{k_0}$; therefore,
\[
 \overline{ak}+\overline{bk}+\overline{ck}+\overline{dk}=4r-\overline{ek_0}-k_0-r=\overline{e(r-k_0)}+r-k_0+r.
 \]
 This concludes (4).
 
 \bigskip
 
 (2) (continued) Now we are ready to show that if $\gcd(a,r)\neq 1$ then $a\equiv 0 \bmod r$. If $\gcd(a,r)\neq 1$, then we are in case (a') of (2), and by (4), the assumption of Theorem \ref{terminal lemma} is satisfied. Hence we get $a\equiv e \bmod r$. On the other hand, $2a\equiv e \bmod r$. Hence $a\equiv e \equiv 0 \bmod r$.
\end{proof}

\begin{remark}
Before discussing case by case, we explain the strategy again. 
By Proposition \ref{prop xy}(2)(4) or \ref{prop x2}(2)(4), we checked that the $6$-tuple $\frac{1}{r}(a,b,c,d;e, 1)$ satisfies the terminal lemma (Theorem \ref{terminal lemma}). So  we can list all possible values for $\frac{1}{r}(a,b,c,d;e)$ in each case. Then we can apply Proposition \ref{prop xy}(1) or \ref{prop x2}(1) to some special $\alpha_{k_1}$ to get more restrictions on monomials in $f$, which leads to the final conclusion. For the smart choice of  $\alpha_{k_1}$ we just follow \cite[Section 7]{YPG}, but again the existence of $\beta$ gets in the way. So we have to consider the case that $\alpha_{k_1}=\beta$ or $\beta'$, in which we can not apply Proposition \ref{prop xy}(1) or \ref{prop x2}(1). In this case, we should consider to choose other special $\alpha_{k_2}$, $\alpha_{k_3}$, etc., and make more discussions.
\end{remark}

\subsection{The $cA$ case}\label{cA}
In this subsection, we consider case $cA$ in Proposition \ref{f cases}: $f=xy+g(z,t)$ with $g\in \mm^2$.

By Proposition \ref{prop xy}(2), $q=\gcd(d,r)=\gcd(e, r)$, and $c$ is coprime to $r$, this means that 
$q$ divides the degree of $z$ in each monomial in $g$, that is, we may write $g=g(z^q, t)$ by abusing the notation.

By Proposition \ref{prop xy}(2)(4), we can list all possible types by Theorem \ref{terminal lemma}, and one of the followings holds (after possibly interchanging $x,y$ or $z,t$):

If $q>1$,
\begin{enumerate}
\item[(A)] $a+b\equiv 0, c\equiv 1, d\equiv e \bmod r$; that is, $\frac{1}{r}(a, -a, 1, 0; 0)$;
\item[(B)] $a\equiv 1, b+c\equiv 0, d\equiv e \bmod r$; that is, $\frac{1}{r}(1, b, -b, b+1; b+1)$.
\end{enumerate}

If $q=1$,
\begin{enumerate}
\item[(C)]$\frac{1}{r}(a, 1, -a, a+1; a+1)$;
\item[(D)]$\frac{1}{r}(a, -a-1, -a, a+1; -1)$.

\end{enumerate}
This list can be easily derived from Theorem \ref{terminal lemma} and for the proof we refer to \cite[(7.7)]{YPG}. In each case, we may always assume that $0<a<r$ or $0<b<r$ accordingly.

We will discuss case by case.

\bigskip

\noindent {\bf Case (A)}: This gives an isolated hyperquotient singularity of type $\frac{1}{r}(a, -a, 1, 0; 0)$ and $f=xy+g(z^r, t)$
(note that $q=r$ in this case), where $g\in \mm^2$ and $a, r$ are coprime. But such a singularity is terminal by \cite[Theorem 6.5]{KSB}, so this case can be excluded.

\bigskip

\noindent {\bf Case (C)}: Since $a$, $a+1$ are coprime to $r$, we can take an integer $1<k_1<r$ such that $\overline{k_1(a+1)}=1$. Consider 
$$\alpha_{k_1}=\frac{1}{r}(r-k_1+1, k_1, k_1-1, 1).$$
 Then $\alpha_{k_1}(zt)=\frac{k_1}{r}<1$.
There are 3 cases: (C.1) $\alpha_{k_1}\neq \beta, \beta'$; (C.2) $\alpha_{k_1}= \beta$; (C.3) $\alpha_{k_1}= \beta'$.

\medskip

\noindent {\bf Case (C.1)}: If $\alpha_{k_1}\neq \beta, \beta'$, then by Proposition \ref{prop xy}(1), $\alpha_{k_1}(f)=\alpha_{k_1}(xy)-1=\frac{1}{r}$. So there is a monomial $\xx^{\bf m}\in g\in (z,t)^2$ with $\alpha_{k_1}(\xx^{\bf m})=\frac{1}{r}$, but this is absurd.

\medskip

\noindent {\bf Case (C.2)}: If $\alpha_{k_1}= \beta$, then by Proposition \ref{prop xy}(3), $k_1=k_0$ and $\frac{3}{4}<1-\delta<\frac{k_1}{r}<1$.
Then $2k_1> \frac{3}{2}r> r+3$. Note that $r-k_1< 2k_1-r< k_1$. Consider 
$$\alpha_{2k_1-r}=\frac{1}{r}(2r-2k_1+2, 2k_1-r, 2k_1-r-2, 2).$$
 Then $\alpha_{2k_1-r}(zt)=\frac{2k_1-r}{r}<1$ and $\alpha_{2k_1-r}\neq \beta, \beta'$. 
By Proposition \ref{prop xy}(1), $\alpha_{2k_1-r}(f)=\alpha_{2k_1-r}(xy)-1=\frac{2}{r}$. So there is a monomial $\xx^{\bf m}\in g\in (z,t)^2$ with $\alpha_{2k_1-r}(\xx^{\bf m})=\frac{2}{r}$, but this is absurd since by definition $\alpha_{2k_1-r}(z)\geq \alpha_{2k_1-r}(t)=\frac{2}{r}$. Therefore, this case can be excluded.

\medskip

\noindent {\bf Case (C.3)}: If $\alpha_{k_1}= \beta'$, then $\alpha_{r-k_1}=\alpha'_{k_1}= \beta$, which implies that $k_1=r-k_0$ and $1-\delta<\frac{r-k_1}{r}<1$ by Proposition \ref{prop xy}(3). Hence $\frac{k_1}{r}<\delta\leq \frac{1}{3}$ and $k_1<2k_1<r-k_1$. Consider 
$$\alpha_{2k_1}=\frac{1}{r}(r-2k_1+2, 2k_1, 2k_1-2, 2).$$
 In particular, $\alpha_{2k_1}(zt)=\frac{2k_1}{r}<1$ and $\alpha_{2k_1}\neq \beta, \beta'$. So by Proposition \ref{prop xy}(1), $\alpha_{2k_1}(f)=\alpha_{2k_1}(xy)-1=\frac{2}{r}$. So there is a monomial $\xx^{\bf m}\in g\in (z,t)^2$ with $\alpha_{2k_1}(\xx^{\bf m})=\frac{2}{r}$, but this is absurd since by definition  $\alpha_{2k_1}(z)\geq \alpha_{2k_1}(t)=\frac{2}{r}$. Therefore, this case can be excluded.

\bigskip

\noindent {\bf Case (D)}: We can take the integer $k_1=r-1$. Then $\alpha_{k_1}{(zt)}=\frac{k_1}{r}<1$. There are 3 cases: (D.1) $\alpha_{k_1}\neq \beta, \beta'$; (D.2) $\alpha_{k_1}= \beta'$; (D.3) $\alpha_{k_1}= \beta$.

\medskip

\noindent {\bf Case (D.1)}: If $\alpha_{k_1}\neq \beta, \beta'$, then by Proposition \ref{prop xy}(1), $\alpha_{k_1}(f)=\alpha_{k_1}(xy)-1=\frac{1}{r}$. So there is a monomial $\xx^{\bf m}\in g\in (z,t)^2$ with $\alpha_{k_1}(\xx^{\bf m})=\frac{1}{r}$, but this is absurd. 

\medskip

\noindent {\bf Case (D.2)}: If $\alpha_{k_1}= \beta'$, then $\alpha_{r-k_1}=\alpha'_{k_1}= \beta$, which implies that $k_0=r-k_1=1$ and $1-\delta<\frac{1}{r}<1$ by Proposition \ref{prop xy}(3), but this is absurd as $\delta<\frac{1}{2}$.

\medskip

\noindent {\bf Case (D.3)}: If $\alpha_{k_1}= \beta$, then by Proposition \ref{prop xy}(3), $k_1=k_0=r-1$. Now consider $\alpha_{r-2}$, then it is easy to see that $\alpha_{r-2}\neq \beta, \beta'$ as $r>3$. Note that $\alpha_{r-2}{(zt)}\equiv\frac{r-2}{r}\bmod \ZZ$. 

If $\alpha_{r-2}{(zt)}<1$, then $\alpha_{r-2}{(zt)}=\frac{r-2}{r}$ and by Proposition \ref{prop xy}(1), $\alpha_{r-2}(f)=\alpha_{r-2}(xy)-1=\frac{2}{r}$. So there is a monomial $\xx^{\bf m}\in g\in (z,t)^2$ with $\alpha_{r-2}(\xx^{\bf m})=\frac{2}{r}$. As $r>4$, either $z^2\in g$, $\alpha_{r-2}(z)=\frac{1}{r}$, $\alpha_{r-2}(t)=\frac{r-3}{r}$; or $t^2\in g$, $\alpha_{r-2}(z)=\frac{r-3}{r}$, $\alpha_{r-2}(t)=\frac{1}{r}$. We only deal with the former case, the latter one can be reduced to the former one by symmetry by interchanging $x$ with $y$, $z$ with $t$, and $a$ with $-a-1$. The former case implies that $\overline{2a}\equiv 1 \bmod r$, which means that $2a=r+1$. 
The type becomes 
$$\frac{1}{r}\left(\frac{r+1}{2},\frac{r-3}{2},\frac{r-1}{2},\frac{r+3}{2};-1\right).$$
Now consider 
$$\alpha_{r-3}=\frac{1}{r}\left(\frac{r-3}{2},\frac{r+9}{2},\frac{r+3}{2},\frac{r-9}{2}\right)$$
 as $r>9$. Recall that $k_0=r-1$, hence 
$\alpha_{r-3}\neq \beta, \beta'$. Since $\alpha_{r-3}{(zt)}=\frac{r-3}{r}$, by Proposition \ref{prop xy}(1), $\alpha_{r-3}(f)=\alpha_{r-3}(xy)-1=\frac{3}{r}$. So there is a monomial $\xx^{\bf m}\in g\in (z,t)^2$ with $\alpha_{r-3}(\xx^{\bf m})=\frac{3}{r}$. But this is absurd since $\alpha_{r-3}{(z)}>\alpha_{r-3}{(t)}=\frac{r-9}{2r}>\frac{3}{2r}$ as $r>12.$

If $\alpha_{r-2}{(zt)}>1$, then $\alpha_{r-2}{(zt)}=\frac{2r-2}{r}$ and $\overline{2a}+\overline{-2a-2}=2r-2$. Since $a, a+1$ are coprime to $r$, the only solution to this equation is $a\equiv \frac{r-1}{2} \bmod r$. The type becomes $$\frac{1}{r}\left(\frac{r-1}{2},\frac{r-1}{2},\frac{r+1}{2},\frac{r+1}{2};-1\right).$$ Now consider $$\alpha_{r-3}=\frac{1}{r}\left(\frac{r+3}{2},\frac{r+3}{2},\frac{r-3}{2},\frac{r-3}{2}\right).$$ Recall that $k_0=r-1$, hence 
$\alpha_{r-3}\neq \beta, \beta'$. Since $\alpha_{r-3}{(zt)}=\frac{r-3}{r}$, by Proposition \ref{prop xy}(1), $\alpha_{r-3}(f)=\alpha_{r-3}(xy)-1=\frac{3}{r}$. So there is a monomial $\xx^{\bf m}\in g\in (z,t)^2$ with $\alpha_{r-3}(\xx^{\bf m})=\frac{3}{r}$. But this is absurd since $\alpha_{r-3}{(z)}=\alpha_{r-3}{(t)}=\frac{r-3}{2r}>\frac{3}{2r}$ as $r>6.$

\bigskip

\noindent {\bf Case (B)}: Consider $\frac{1}{r}(1,b,-b,b+1; b+1)$ with $b$ coprime to $r$ and $q=\gcd(b+1, r)>0$. If $b+1\equiv 0 \bmod r$, then we get type $\frac{1}{r}(1,-1,1,0; 0)$ which is in case (A), and we already excluded this case.
Hence from now on we assume $1\leq b<b+1<r$.
Consider $$\alpha_{r-1}=\frac{1}{r}(r-1, r-b, b, r-b-1). $$ Note that $\alpha_{r-1}(zt)=\frac{r-1}{r}<1$.
We consider 3 cases: (B.1) $\alpha_{r-1}=\beta'$; (B.2) $\alpha_{r-1}\neq \beta, \beta'$; (B.3) $\alpha_{r-1}= \beta$.

\medskip

\noindent {\bf Case (B.1)}: If $\alpha_{r-1}=\beta'$, then $\alpha_{1}=\beta$ and $k_0=1$, which contradicts $\frac{k_0}{r}>1-\delta$.

\medskip

\noindent {\bf Case (B.2)}: If $\alpha_{r-1}\neq \beta, \beta'$, then by Proposition \ref{prop xy}(1), $\alpha_{r-1}(f)=\alpha_{r-1}(xy)-1=\frac{r-b-1}{r}$. Hence there is a monomial $\xx^{\bf m}\in g(z^q, t)\in (z,t)^2$ with $\alpha_{r-1}(\xx^{\bf m})=\frac{r-b-1}{r}$. Obviously no multiple of $t$ will work, so this monomial has to be $z^n$ with $nb=r-b-1$ for some $n\geq 2$. In particular, $r\geq 3b+1\geq 2b+2$.
We should further consider  $$\alpha_{r-2}=\frac{1}{r}(r-2, r-2b, 2b, r-2b-2)$$ with $\alpha_{r-2}(zt)=\frac{r-2}{r}<1$. Again, there are 3 cases: (B.2.1) $\alpha_{r-2}=\beta'$; (B.2.2) $\alpha_{r-2}\neq \beta, \beta'$; (B.2.3) $\alpha_{r-2}= \beta$.

\medskip

\noindent {\bf Case (B.2.1)}: If $\alpha_{r-2}=\beta'$, then $\alpha_{2}=\beta$ and $k_0=2$, which contradicts $\frac{k_0}{r}>1-\delta$.

\medskip

\noindent {\bf Case (B.2.2)}: If $\alpha_{r-2}\neq \beta, \beta'$, then by 
Proposition \ref{prop xy}(1), $\alpha_{r-2}(f)=\alpha_{r-2}(xy)-1=\frac{r-2b-2}{r}$. Hence there is a monomial $\xx^{\bf m}\in g(z^q, t)\in (z,t)^2$ with $\alpha_{r-2}(\xx^{\bf m})=\frac{r-2b-2}{r}$. This time $z^n$ can not work (because $2nb>r-2b-2$), nor can any multiple of $zt$, so this monomial has to be $t^{n'}$ with $n'(r-2b-2)=r-2b-2$ for some $n'\geq 2$. This implies that $r=2b+2$. Combining with $r\geq 3b+1$, this implies that $r=4$, a contradiction.

\medskip

\noindent {\bf Case (B.2.3)}: If $\alpha_{r-2}= \beta$, then we should further consider $\alpha_{r-3}$. Recall that $r\geq 3b+1$. Note that $r\neq 3b+2$ as $\gcd(r, b+1)>1$. Hence there are two cases:
$$\alpha_{r-3}=\frac{1}{r}(r-3, r-3b, 3b, r-3b-3)$$ if $r\geq 3b+3$, or $$\alpha_{r-3}=\frac{1}{r}(r-3, r-3b, 3b, 2r-3b-3)$$ if $r= 3b+1$.

In the first case, we have $\alpha_{r-3}(zt)=\frac{r-3}{r}<1$ and $\alpha_{r-3}\neq \beta, \beta'$ as $r>5$. Then by 
Proposition \ref{prop xy}(1), $\alpha_{r-3}(f)=\alpha_{r-3}(xy)-1=\frac{r-3b-3}{r}$ and there is a monomial $\xx^{\bf m}\in g(z^q, t)\in (z,t)^2$ with $\alpha_{r-3}(\xx^{\bf m})=\frac{r-3b-3}{r}$. 
This time $z^n$ can not work, nor can any multiple of $zt$, so this monomial has to be $t^{n'}$ with $n'(r-3b-3)=r-3b-3$ for some $n'\geq 2$. This implies that $r=3b+3$. Combining with $nb=r-b-1$ for some $n\geq 2$, this implies that $n\geq 3$ and $r\leq 9$, a contradiction.

In the second case, we consider further $$\alpha_{r-5}=\frac{1}{r}(r-5, 2r-5b, 5b-r, 2r-5b-5)$$ (this holds since $b\geq 3$). We have $\alpha_{r-5}(zt)=\frac{r-5}{r}<1$ and $\alpha_{r-5}\neq \beta, \beta'$ as $r>7$. Then by 
Proposition \ref{prop xy}(1), $\alpha_{r-5}(f)=\alpha_{r-5}(xy)-1=\frac{2r-5b-5}{r}$ and there is a monomial $\xx^{\bf m}\in g(z^q, t)\in (z,t)^2$ with $\alpha_{r-5}(\xx^{\bf m})=\frac{2r-5b-5}{r}$. 
This time $z^n$ can not work, nor can any multiple of $zt$, so this monomial has to be $t^{n'}$ with $n'(2r-5b-5)=2r-5b-5$ for some $n'\geq 2$. This implies that $2r=5b+5$. Combining with $r=3b+1$, this implies that $r=10$, a contradiction. Therefore case (B.2) is excluded.

\medskip

\noindent {\bf Case (B.3)}: If $\alpha_{r-1}= \beta$, then $k_0=r-1$. We consider further $\alpha_{r-2}\neq \beta, \beta'$ as $r>3$. Note that $r\neq 2b, 2b+1$ as $\gcd(r, b)=1$ and $\gcd(r, b+1)>1$. Hence there are 2 cases:
 \[\alpha_{r-2}=\frac{1}{r}(r-2, r-2b, 2b, r-2b-2) \tag{B.3.1}\] if $r\geq 2b+2$; or
\[\alpha_{r-2}=\frac{1}{r}(r-2, 2r-2b, 2b-r, 2r-2b-2)\tag{B.3.2}\] if $r\leq 2b-1$.

\medskip

\noindent {\bf Case (B.3.1)}: In this case, $\alpha_{r-2}(zt)=\frac{r-2}{r}<1$ and by 
Proposition \ref{prop xy}(1), $\alpha_{r-2}(f)=\alpha_{r-2}(xy)-1=\frac{r-2b-2}{r}$. Hence there is a monomial $\xx^{\bf m}\in g(z^q, t)\in (z,t)^2$ with $\alpha_{r-2}(\xx^{\bf m})=\frac{r-2b-2}{r}$. Arguing as before, either $r=2b+2$, or this monomial is $z^n$ with $2nb=r-2b-2$ and $n\geq 2$.

In the first case, we further consider $$\alpha_{r-3}=\frac{1}{r}(r-3, 2r-3b, 3b-r, 2r-3b-3)$$
 (this holds since $b\geq 2$). Then $\alpha_{r-3}(zt)=\frac{r-3}{r}<1$ and $\alpha_{r-3}\neq \beta, \beta'$ as $r>4$. Then by 
Proposition \ref{prop xy}(1), $\alpha_{r-3}(f)=\alpha_{r-3}(xy)-1=\frac{2r-3b-3}{r}$ and there is a monomial $\xx^{\bf m}\in g(z^q, t)\in (z,t)^2$ with $\alpha_{r-3}(\xx^{\bf m})=\frac{2r-3b-3}{r}$. As $2r-3b-3>0$, this monomial has to be $z^{n'}$ with $n'(3b-r)=2r-3b-3$ for some $n'\geq 2$. Combing with $r=2b+2$, this implies that $b\leq 5$ and $r\leq 12$, a contradiction.

In the second case, $r\geq 6b+2$. We further consider $$\alpha_{r-3}=\frac{1}{r}(r-3, r-3b, 3b, r-3b-3).$$ Then $\alpha_{r-3}(zt)=\frac{r-3}{r}<1$ and $\alpha_{r-3}\neq \beta, \beta'$ as $r>4$. Then by 
Proposition \ref{prop xy}(1), $\alpha_{r-3}(f)=\alpha_{r-3}(xy)-1=\frac{r-3b-3}{r}$ and there is a monomial $\xx^{\bf m}\in g(z^q, t)\in (z,t)^2$ with $\alpha_{r-3}(\xx^{\bf m})=\frac{r-3b-3}{r}$. 
But $z^n$ will not work, nor any multiple of $t$ as $r\geq 6b+2$, a contradiction.

\medskip

\noindent {\bf Case (B.3.2)}: In this case, $\alpha_{r-2}(zt)=\frac{r-2}{r}<1$ and by 
Proposition \ref{prop xy}(1), $\alpha_{r-2}(f)=\alpha_{r-2}(xy)-1=\frac{2r-2b-2}{r}$. Hence there is a monomial $\xx^{\bf m}\in g(z^q, t)\in (z,t)^2$ with $\alpha_{r-2}(\xx^{\bf m})=\frac{2r-2b-2}{r}$. Since $r>b+1$, this monomial is $z^n$ with $n(2b-r)=2r-2b-2$ and $n\geq 2$. This implies that $2r\geq 3b+1$. Note that $2r\neq 3b+2$ as $\gcd(r, b+1)>1$. Hence there are two cases for $\alpha_{r-3}$: $$\alpha_{r-3}=\frac{1}{r}(r-3, 2r-3b, 3b-r, 2r-3b-3)$$ if $2r\geq 3b+3$; or 
$$\alpha_{r-3}=\frac{1}{r}(r-3, 2r-3b, 3b-r, 3r-3b-3)$$ if $2r= 3b+1$.

In the first case, $\alpha_{r-3}(zt)=\frac{r-3}{r}<1$ and $\alpha_{r-3}\neq \beta, \beta'$ as $r>4$. Then by 
Proposition \ref{prop xy}(1), $\alpha_{r-3}(f)=\alpha_{r-3}(xy)-1=\frac{2r-3b-3}{r}$ and there is a monomial $\xx^{\bf m}\in g(z^q, t)\in (z,t)^2$ with $\alpha_{r-3}(\xx^{\bf m})=\frac{2r-3b-3}{r}$. But $z^n$ or any multiple of $zt$ can not work, hence this monomial is $t^{n'}$ for some $n'\geq 2$ which implies that $2r=3b+3$. Combining with $n(2b-r)=2r-2b-2$ for $n\geq 2$, this implies that $n\geq 3$. If $n\geq 4$, then it is easy to show that $r\leq 12$ by this two equations, a contradiction. If $n=3$, then $(r, b+1)=(18, 12)$. But recall that $q=\gcd(b+1, r)$ divides $n$ by construction as $z^n\in g(z^q, t)$, this is also absurd.

In the second case, further consider $$\alpha_{r-4}=\frac{1}{r}(r-4, 3r-4b, 4b-2r, 3r-4b-4)$$ (since $b\geq 5$). Since $\alpha_{r-4}(zt)=\frac{r-4}{r}<1$ and $\alpha_{r-4}\neq \beta, \beta'$ as $r>5$, then by 
Proposition \ref{prop xy}(1), $\alpha_{r-4}(f)=\alpha_{r-4}(xy)-1=\frac{3r-4b-4}{r}$ and there is a monomial $\xx^{\bf m}\in g(z^q, t)\in (z,t)^2$ with $\alpha_{r-4}(\xx^{\bf m})=\frac{3r-4b-4}{r}$. 
If  $3r-4b-4\neq 0$, then  any multiple of $t^2$ or $zt$ can not work, hence this monomial is $z^{n'}$ for some $n'\geq 2$ which implies that
$3r-4b-4\geq 2(4b-2r)$. In conclusion,  $3r-4b-4=0$ or $3r-4b-4\geq 2(4b-2r)$ holds. Combining with $2r=3b+1$, it is easy to see that $r\leq 8$, a contradiction.

\bigskip

Therefore, the $cA$ case is excluded.
 
\subsection{The odd case}\label{odd}
 
In this subsection, we consider the odd case in Proposition \ref{f cases}: $f=x^2+y^2+g(z,t)$ with $g\in \mm^3$ and $a\neq b$.

By Proposition \ref{prop x2}(2)(4), we can list all possible types by Theorem \ref{terminal lemma}, and one of the followings holds:
$\frac{1}{2}(0,1,1,1;0)$; or $\frac{1}{r}(1, \frac{r+2}{2},\frac{r-2}{2}, 2;2)$ with $4|r.$
For the proof we refer to \cite[(7.10)]{YPG}.

Here we only need to exclude the second case. Recall that $r>12$. Consider
$\alpha_{r-2}=\frac{1}{r}(r-2,r-2,2,r-4)$. We need to consider $2$ cases: %(%1) $\alpha_{r-2}=\beta'$;
 (1) $\alpha_{r-2}\neq \beta, \beta'$; (2) $\alpha_{r-2}= \beta$ or $\beta'$.

%\medskip

%\noindent {\bf Case (1)}: If $\alpha_{r-2}=\beta'$, then $k_0=2$, which contradicts $\frac{k_0}{r}>1-\delta$.

\medskip

\noindent {\bf Case (1)}: If $\alpha_{r-2}\neq \beta, \beta'$, then since $\alpha_{r-2}(yzt)=\frac{2r-4}{r}<\alpha_{r-2}(x)+1=\frac{2r-2}{r}$, by 
Proposition \ref{prop x2}(1), $\alpha_{r-2}(f)=2\alpha_{r-2}(x)-1=\frac{r-4}{r}$ and there is a monomial $\xx^{\bf m}\in g\in (z,t)^2$ with $\alpha_{r-2}(\xx^{\bf m})=\frac{r-4}{r}$. Note that the only possible monomial with weight $\frac{r-4}{r}$ is $z^{\frac{r-4}{2}}$, but it is not in the same eigenspace as $f$ since $\frac{r-4}{2}c=\frac{r-4}{2}\cdot\frac{r-2}{2} \equiv 2-\frac{r}{2}\not \equiv 2\equiv e \bmod r$.

\medskip

\noindent {\bf Case (2)}: If $\alpha_{r-2}= \beta$ or $\beta'$, then we further
consider $\alpha_{r-4}=\frac{1}{r}(r-4,r-4,4,r-8)$. Note that $\alpha_{r-4}\neq \beta, \beta'$ as $r>6$, and
$\alpha_{r-4}(yzt)=\frac{2r-8}{r}<\alpha_{r-4}(x)+1=\frac{2r-4}{r}$. Then by 
Proposition \ref{prop x2}(1), $\alpha_{r-4}(f)=2\alpha_{r-4}(x)-1=\frac{r-8}{r}$ and there is a monomial $\xx^{\bf m}\in g\in (z,t)^2$ with $\alpha_{r-4}(\xx^{\bf m})=\frac{r-8}{r}$. Note that the only possible monomial with weight $\frac{r-8}{r}$ is $z^{\frac{r-8}{4}}$, but it is not in the same eigenspace as $f$ since $\frac{r-8}{4}c\not \equiv 2\equiv e \bmod r$. This can be seen by $\frac{r-8}{4}c=\frac{r-8}{4}\cdot \frac{r-2}{2}= 2-r+\frac{r^2-2r}{8}$ where  $\frac{ r-2}{8}$ is not an integer.

\medskip

Therefore, the odd case is excluded.

\subsection{The $cD$-$E$ case}\label{cDE}
 
In this subsection, we consider the remaining cases in Proposition \ref{f cases}: $f=x^2+g(y, z,t)$ with $g\in \mm^3$.

By Proposition \ref{prop x2}(2)(4), we can list all possible types by Theorem \ref{terminal lemma}, and one of the followings holds (after possibly interchanging $y, z,t$):

If $a\equiv e\equiv 0 \bmod r$ and $b,c,d$ are coprime to $r$, then
 \begin{enumerate}
 \item[(a)] $\frac{1}{r}(0, b,-b,1;0)$ with $b$ coprime to $r$.
 \end{enumerate}

If $q=2=\gcd(d,r)=\gcd(e,r)$ and $a, b,c$ are coprime to $r$, then
 \begin{enumerate}
 \item[(b)] $\frac{1}{r}(a, -a,1, 2a;2a)$ with $r$ even and $a$ coprime to $r$;
 \item[(c)] $\frac{1}{r}(1, b,-b, 2;2)$ with $r$ even and $b$ coprime to $r$.
 \end{enumerate}

If $r$ is odd and $a,b,c,d,e$ are coprime to $r$, then
 \begin{enumerate}
 \item[(d)] $\frac{1}{r}(\frac{r-1}{2}, \frac{r+1}{2},c,-c;-1)$ with $r$ odd and $c$ coprime to $r$;
 \item[(e)] $\frac{1}{r}(a,-a,2a,1;2a)$ with $r$ odd and $a$ coprime to $r$;
 \item[(f)] $\frac{1}{r}(1,b,-b,2;2)$ with $r$ odd and $b$ coprime to $r$.
 \end{enumerate}
In each case, we may always assume that $0<a,b,c<r$ accordingly.

We will discuss case by case.

\bigskip

\noindent {\bf Case (b)}: Take $0<k_1<r$ such that $\overline{k_1a}=\frac{r+2}{2}<r$, then $\alpha_{k_1}=\frac{1}{r}(\frac{r+2}{2},\frac{r-2}{2}, k_1, 2 )$. We need to consider $2$ cases: (b.1) $\alpha_{k_1}\neq \beta, \beta'$; (b.2) $\alpha_{k_1}= \beta$ or $\beta'$.

\medskip

\noindent {\bf Case (b.1)}: If $\alpha_{k_1}\neq \beta, \beta'$, since 
$\alpha_{k_1}(yzt)<\alpha_{k_1}(x)+1$, by Proposition \ref{prop x2}(1), $\alpha_{k_1}(f)=2\alpha_{k_1}(x)-1=\frac{2}{r}$, but no monomial in $\mm^3$ has weight $\leq \frac{2}{r}$, a contradiction.

\medskip

\noindent {\bf Case (b.2)}: If $\alpha_{k_1}= \beta$ or $\beta'$, then we consider further $0<k_2<r$ such that $\overline{k_2a}=\frac{r+4}{2}<r$, then $\alpha_{k_2}=\frac{1}{r}(\frac{r+4}{2},\frac{r-4}{2}, k_2, 4 )$. 
Note that $\alpha_{k_2}\neq \beta, \beta'$, otherwise $k_1+k_2\equiv 0\bmod r$, which implies that $   \frac{r+2}{2}+ \frac{r+4}{2}\equiv 0\bmod r$, a contradiction.
Since 
$\alpha_{k_2}(yzt)<\alpha_{k_2}(x)+1$, by Proposition \ref{prop x2}(1), $\alpha_{k_2}(f)=2\alpha_{k_2}(x)-1=\frac{4}{r}$, but the only possible monomial in $\mm^3$ has weight $\leq \frac{4}{r}$ is $z^4$ with $k_2=1$ (recall that $r>12$). In this case, $a\equiv \frac{r+4}{2}\bmod r$ and hence $ k_1\frac{r+4}{2}\equiv \frac{r+2}{2}\bmod r$. This implies that $r|4k_1-2$. $r>12$ implies that $k_1\geq 4$. Note that $r> k_1$ and $r$ is even, so either $r=4k_1-2$ or $3r=4k_1-2$, in particular, in both case, $3r\geq  4k_1-2\geq \frac{7}{2}k_1$. Recall that $\alpha_{k_1}= \beta$ or $\beta'$ implies that $k_1=k_0$ or $r-k_0$. If $k_1=r-k_0$, then $\frac{k_0}{r}>1-\delta$ implies that $r>4k_1$, a contradiction. If $k_1=k_0$, then $\frac{k_0}{r}\leq \frac{6}{7}$ by the above calculation, again a contradiction.

\bigskip

\noindent {\bf Case (c)}: This case is similar to case (b). Take $k_1=\frac{r+2}{2}$ and consider $\alpha_{k_1}=\frac{1}{r}(\frac{r+2}{2},\overline{k_1b}, r-\overline{k_1b}, 2 )$. We need to consider $2$ cases: (c.1) $\alpha_{k_1}\neq \beta, \beta'$; (c.2) $\alpha_{k_1}= \beta$ or $\beta'$.

%(b.1): if $\alpha_{k_1}\neq \beta, \beta'$, since 
%$\alpha_{k_1}(yzt)<\alpha_{k_1}(x)+1$, by Proposition \ref{prop x2}(1), $\alpha_{k_1}(f)=2\alpha_{k_1}(x)-1=\frac{2}{r}$, but no monomial in $\mm^3$ has weight $\leq \frac{2}{r}$, a contradiction.

\medskip

\noindent {\bf Case (c.1)}: In this case the argument is the same as case (b.1). If $\alpha_{k_1}\neq \beta, \beta'$, since 
$\alpha_{k_1}(yzt)<\alpha_{k_1}(x)+1$, by Proposition \ref{prop x2}(1), $\alpha_{k_1}(f)=2\alpha_{k_1}(x)-1=\frac{2}{r}$, but no monomial in $\mm^3$ has weight $\leq \frac{2}{r}$, a contradiction.

\medskip

\noindent {\bf Case (c.2)}: If $\alpha_{k_1}= \beta$ or $\beta'$, then $k_1=k_0$ or $r-k_0$. Recall that $k_1=\frac{r+2}{2}$, so  $\frac{k_0}{r}\leq \frac{r+2}{2r}\leq \frac{5}{6}$ as $r>2$, and get a contradiction by $\frac{k_0}{r}>1-\delta$.

% then we consider further $k_2=\frac{r+4}{2}$and $\alpha_{k_2}=\frac{1}{r}(\frac{r+4}{2},\overline{k_2b}, r-\overline{k_2b}, 4 )$. 
%Note that $\alpha_{k_2}\neq \beta, \beta'$, as $k_1+k_2\not\equiv 0\bmod r$
%Since 
%$\alpha_{k_2}(yzt)<\alpha_{k_2}(x)+1$, by Proposition \ref{prop x2}(1), $\alpha_{k_2}(f)=2\alpha_{k_2}(x)-1=\frac{4}{r}$, but the only possible monomial in $\mm^3$ has weight $\leq \frac{4}{r}$ is $y^4$ with $\overline{k_2b}=1$ (possibly interchanging $y, z$ if needed). In this case, $a\equiv \frac{r+4}{2}\bmod r$ and hence $ k_1\frac{r+4}{2}\equiv \frac{r+2}{2}\bmod r$. This implies that $r|4k_1-2$. $r>12$ implies that $k_1\geq 4$. Note that $r> k_1$ and $r$ is odd, so either $r=4k_1-2$ or $3r=4k_1-2$, in particular, in both case, $3r\leq 4k_1-2\leq \frac{7}{2}k_1$. Recall that $\alpha_{k_1}= \beta$ or $\beta'$ implies that $k_1=k_0$ or $r-k_0$. If $k_1=r-k_0$, then $\frac{k_0}{r}>1-\delta$ implies that $r>4k_1$, a contradiction. If $k_1=k_0$, then $\frac{k_0}{r}\leq \frac{6}{7}$ by the above calculation, again a contradiction.

\bigskip

\noindent {\bf Case (d)}: Take $k_1=r-1$, then $\alpha_{k_1}=\frac{1}{r}(\frac{r+1}{2},\frac{r-1}{2}, \overline{k_1c}, r-\overline{k_1c} )$. We need to consider $2$ cases: (d.1) $\alpha_{k_1}\neq \beta, \beta'$; (d.2) $\alpha_{k_1}= \beta$ or $\beta'$.

\medskip

\noindent {\bf Case (d.1)}: If $\alpha_{k_1}\neq \beta, \beta'$, since 
$\alpha_{k_1}(yzt)<\alpha_{k_1}(x)+1$, by Proposition \ref{prop x2}(1), $\alpha_{k_1}(f)=2\alpha_{k_1}(x)-1=\frac{1}{r}$, but no monomial in $\mm^3$ has weight $\leq \frac{1}{r}$, a contradiction.

\medskip

\noindent {\bf Case (d.2)}: If $\alpha_{k_1}= \beta$ or $\beta'$, 
then we consider $0<k_2, k_3<r$ such that $\overline{k_2a}=\frac{r+3}{2}<r$ 
and $\overline{k_3a}=\frac{r+5}{2}<r$. 
Then 
$\alpha_{k_2}=\frac{1}{r}(\frac{r+3}{2},\frac{r-3}{2}, \overline{k_2c}, r-\overline{k_2c} )$ and
$\alpha_{k_3}=\frac{1}{r}(\frac{r+5}{2},\frac{r-5}{2}, \overline{k_3c}, r-\overline{k_3c} )$.
Note that $\alpha_{k_2}\neq \beta$ or $\beta'$, otherwise $k_1+k_2\equiv 0$ and $\overline{ak_1+ak_2}=2\equiv 0 \bmod r$, which is absurd. Since 
$\alpha_{k_2}(yzt)<\alpha_{k_2}(x)+1$, by Proposition \ref{prop x2}(1), $\alpha_{k_2}(f)=2\alpha_{k_2}(x)-1=\frac{3}{r}$, but the only possible monomial in $\mm^3$ has weight $\frac{3}{r}$ is $z^3$ with $\overline{k_2c}=1$ (after possibly interchanging $z,t$).
Similarly, $\alpha_{k_3}\neq \beta$ or $\beta'$, otherwise $k_1+k_3\equiv 0$ and $\overline{ak_1+ak_3}=3\equiv 0 \bmod r$, which is absurd. Since 
$\alpha_{k_3}(yzt)<\alpha_{k_3}(x)+1$, by Proposition \ref{prop x2}(1), $\alpha_{k_3}(f)=2\alpha_{k_3}(x)-1=\frac{5}{r}$. In order to have a monomial in $\mm^3$ with weight $\frac{5}{r}$, one of  $\overline{k_3c}$ and $r-\overline{k_3c}$ is 1. Therefore $k_2\pm k_3\equiv 0 \bmod r$ and this implies that $\frac{r+3}{2}\pm \frac{r+5}{2}\equiv 0 \bmod r$, a contradiction.

\bigskip

\noindent {\bf Case (e)}: Take $0<k_1<r$ such that $\overline{k_1a}=\frac{r+1}{2}$, then $\alpha_{k_1}=\frac{1}{r}(\frac{r+1}{2},\frac{r-1}{2}, 1, k_1)$. We need to consider $2$ cases: (e.1) $\alpha_{k_1}\neq \beta, \beta'$; (e.2) $\alpha_{k_1}= \beta$ or $\beta'$.

\medskip

\noindent {\bf Case (e.1)}:  In this case the argument is the same as case (d.1). If $\alpha_{k_1}\neq \beta, \beta'$, since 
$\alpha_{k_1}(yzt)<\alpha_{k_1}(x)+1$, by Proposition \ref{prop x2}(1), $\alpha_{k_1}(f)=2\alpha_{k_1}(x)-1=\frac{1}{r}$, but no monomial in $\mm^3$ has weight $\leq \frac{1}{r}$, a contradiction.

\medskip

\noindent {\bf Case (e.2)}: If $\alpha_{k_1}= \beta$ or $\beta'$, 
then we consider $0<k_2, k_3<r$ such that $\overline{k_2a}=\frac{r+3}{2}<r$ 
and $\overline{k_3a}=\frac{r+5}{2}<r$. 
Then 
$\alpha_{k_2}=\frac{1}{r}(\frac{r+3}{2},\frac{r-3}{2},3, k_2)$ and
$\alpha_{k_3}=\frac{1}{r}(\frac{r+5}{2},\frac{r-5}{2}, 5, k_3 )$.
We can get a contradiction similarly as case (d.2).
Note that $\alpha_{k_2}\neq \beta$ or $\beta'$, otherwise $k_1+k_2\equiv 0$ and $\overline{ak_1+ak_2}=2\equiv 0 \bmod r$, which is absurd. Since 
$\alpha_{k_2}(yzt)<\alpha_{k_2}(x)+1$, by Proposition \ref{prop x2}(1), $\alpha_{k_2}(f)=2\alpha_{k_2}(x)-1=\frac{3}{r}$, but the only possible monomial in $\mm^3$ has weight $\frac{3}{r}$ is $t^3$ with $k_2=1$.
Similarly, $\alpha_{k_3}\neq \beta$ or $\beta'$, otherwise $k_1+k_3\equiv 0$ and $\overline{ak_1+ak_3}=3\equiv 0 \bmod r$, which is absurd.  Since 
$\alpha_{k_3}(yzt)<\alpha_{k_3}(x)+1$, by Proposition \ref{prop x2}(1), $\alpha_{k_3}(f)=2\alpha_{k_3}(x)-1=\frac{5}{r}$, but the only possible monomial in $\mm^3$ has weight $\frac{5}{r}$ is $t^5$ with $k_3=1$. This is absurd as $k_2\neq k_3$.

\bigskip

\noindent {\bf Case (f)}: Take $k_1=\frac{r+1}{2}$, then $\alpha_{k_1}=\frac{1}{r}(\frac{r+1}{2},\overline{k_1b}, r-\overline{k_1b}, 1)$. We need to consider $2$ cases: (f.1) $\alpha_{k_1}\neq \beta, \beta'$; (f.2) $\alpha_{k_1}= \beta$ or $\beta'$.

\medskip

\noindent {\bf Case (f.1)}:  In this case the argument is the same as case (d.1). If $\alpha_{k_1}\neq \beta, \beta'$, since 
$\alpha_{k_1}(yzt)<\alpha_{k_1}(x)+1$, by Proposition \ref{prop x2}(1), $\alpha_{k_1}(f)=2\alpha_{k_1}(x)-1=\frac{1}{r}$, but no monomial in $\mm^3$ has weight $\leq \frac{1}{r}$, a contradiction.

\medskip

\noindent {\bf Case (f.2)}: If $\alpha_{k_1}= \beta$ or $\beta'$, 
then we consider $0<k_2, k_3<r$ such that $\overline{k_2a}=\frac{r+3}{2}<r$ 
and $\overline{k_3a}=\frac{r+5}{2}<r$. 
Then 
$\alpha_{k_2}=\frac{1}{r}(\frac{r+3}{2},\overline{k_2b}, r-\overline{k_2b}, 3)$ and
$\alpha_{k_3}=\frac{1}{r}(\frac{r+5}{2},\overline{k_3b}, r-\overline{k_3b}, 5)$. We can get a contradiction similarly as case (d.2).
Note that $\alpha_{k_2}\neq \beta$ or $\beta'$, otherwise $k_1+k_2\equiv 0$ and $\overline{ak_1+ak_2}=2\equiv 0 \bmod r$, which is absurd. Since 
$\alpha_{k_2}(yzt)<\alpha_{k_2}(x)+1$, by Proposition \ref{prop x2}(1), $\alpha_{k_2}(f)=2\alpha_{k_2}(x)-1=\frac{3}{r}$, but the only possible monomial in $\mm^3$ has weight $\frac{3}{r}$ is $z^3$ with $r-\overline{k_2b}=1$.
Similarly, $\alpha_{k_3}\neq \beta$ or $\beta'$, otherwise $k_1+k_3\equiv 0$ and $\overline{ak_1+ak_3}=3\equiv 0 \bmod r$, which is absurd.  Since 
$\alpha_{k_3}(yzt)<\alpha_{k_3}(x)+1$, by Proposition \ref{prop x2}(1), $\alpha_{k_3}(f)=2\alpha_{k_3}(x)-1=\frac{5}{r}$, but the only possible monomial in $\mm^3$ has weight $\frac{5}{r}$ is $z^5$ with $r-\overline{k_3b}=1$. This is absurd as $k_2\neq k_3$.

\bigskip

\noindent {\bf Case (a)}: Finally we consider case (a), $\frac{1}{r}(0, b,-b,1;0)$ with $b$ coprime to $r$. Note that $\alpha_k=\frac{1}{r}(0, \overline{bk}, r-\overline{bk}, k)$ for $1\leq k\leq r-1$ and $\alpha_k(g)\equiv 2\alpha_k(x)=0 \bmod \ZZ$. So $\alpha_k(g)\in \ZZ_{>0}$. In this case we can get the following condition for $g(y,z,t)$.

\begin{claim}\label{lem g=1} In case (a), for $1\leq k\leq r-1$ such that $\alpha_k\not \equiv \beta\bmod \ZZ^4$, $\alpha_k(g)=1$.
\end{claim}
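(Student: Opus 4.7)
The plan is to establish the two inequalities $\alpha_k(g)\ge 1$ and $\alpha_k(g)\le 1$ separately, using different inputs.

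For the lower bound, I would first observe that the $\bmu_r$-invariance of every monomial in $g$ (since $e\equiv 0\bmod r$) translates into an integrality statement for $\alpha_k$. Concretely, for a monomial $y^{m_1}z^{m_2}t^{m_3}\in g$ the exponent constraint is $b(m_1-m_2)+m_3\equiv 0 \bmod r$, and so
\[
\alpha_k(y^{m_1}z^{m_2}t^{m_3})=\frac{1}{r}\bigl(m_1\overline{bk}+m_2(r-\overline{bk})+m_3k\bigr)
\]
is a non-negative integer. Because $b$ is coprime to $r$ and $1\le k\le r-1$, each of the three weights $\alpha_k(y),\alpha_k(z),\alpha_k(t)$ is strictly positive, so every monomial of $g\subset\mathfrak m^3$ has strictly positive weight. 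Combining positivity with integrality gives $\alpha_k(g)\ge 1$.

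For the upper bound, I would argue by contradiction: suppose $\alpha_k(g)\ge 2$. The idea is to feed Rule I a slightly shifted vector that has more room for $x^2$. Set
\[
\gamma:=\alpha_k+(1,0,0,0)\in N\cap\sigma.
\]
Then $\gamma(y)=\alpha_k(y)$, $\gamma(z)=\alpha_k(z)$, $\gamma(t)=\alpha_k(t)$, so $\gamma(g)=\alpha_k(g)\ge 2$, while $\gamma(x^2)=2$. Hence $\gamma(f)=\min(2,\gamma(g))=2$, whereas
\[
\gamma(x_1\cdots x_4)=2+\tfrac{k}{r},
\]
giving $\gamma(x_1\cdots x_4)-\gamma(f)=\tfrac{k}{r}<1$. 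Since $\gamma\equiv\alpha_k\not\equiv\beta\bmod\ZZ^4$, in particular $\gamma\neq\beta$, and this contradicts Rule I(ii). Therefore $\alpha_k(g)\le 1$, and together with the lower bound we conclude $\alpha_k(g)=1$.

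The only subtlety is verifying that $\gamma$ is a legitimate test vector for Rule I, namely that $\gamma\in N\cap\sigma$ and $\gamma\neq\beta$; both are immediate from the construction (translation by a lattice vector in $\ZZ^4\subset N$ preserves the class mod $\ZZ^4$, and all components of $\gamma$ are non-negative). So I do not foresee a real obstacle here: the entire content is the shift trick that lets us exploit the $x$-direction in $f=x^2+g$, turning any hypothetical surplus in $\alpha_k(g)$ into a violation of the $1$-gap condition.
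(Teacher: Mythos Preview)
Your proof is correct and uses essentially the same idea as the paper: shift $\alpha_k$ in the $x$-direction to $\gamma=\alpha_k+(i,0,0,0)$ and apply Rule~I(ii). The only cosmetic difference is that the paper takes $i=\lceil \alpha_k(g)/2\rceil$ so that $\gamma(f)=\alpha_k(g)$ and derives the bound $\alpha_k(g)<1+i$ directly, whereas you fix $i=1$ and argue by contradiction from $\alpha_k(g)\ge 2$; both arrive at the same contradiction with $\gamma(xyzt)-\gamma(f)=k/r<1$. Your lower-bound argument (integrality from $e\equiv 0$ plus positivity of $\alpha_k(y),\alpha_k(z),\alpha_k(t)$) is also what the paper records just before stating the claim.
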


\begin{proof}
Take $i=\frac{\alpha_k(g)}{2}$ or $\frac{\alpha_k(g)+1}{2}$ respectively if $\alpha_k(g)$ is even or odd. Consider $\gamma=\alpha_k+(i,0,0,0)$. Then $\gamma(f)=\min\{\alpha_k(g), 2i\}=\alpha_k(g)$
and $\gamma(xyzt)=\alpha_k(xyzt)+i=\frac{r+k}{r}+i$.
By the assumption, $\gamma\neq \beta$, hence by Rule I, $\gamma(xyzt)>\gamma(f)+1$, which implies that $\alpha_k(g)<\frac{k}{r}+i<1+i$. Writing out the definition of $i$, it is easy to see that $\alpha_k(g)=1$ is the only solution.
\end{proof}

Now come back to case (a).
Take $0<k_1<r$ such that $\overline{k_1b}=\frac{r-1}{2}$. We may assume that $k_1\geq \frac{r}{2}$ by possibly interchanging $z,t$. Consider $\alpha_{k_1}=\frac{1}{r}(0, \frac{r-1}{2}, \frac{r+1}{2}, k_1)$.

If $\alpha_{k_1}\not \equiv \beta\bmod \ZZ^4$, then by Claim \ref{lem g=1}, $\alpha_{k_1}(g)=1$, which means that there is a monomial in $(y,z,t)^3$ with weight $1$. But all monomials in $(y,z,t)^3$ have weights $\geq \frac{3(r-1)}{2r}>1$, a contradiction.

If $\alpha_{k_1} \equiv \beta\bmod \ZZ^4$, then take $0<k_2<r$ such that $\overline{k_2b}=\frac{r-3}{2}$. Consider $\alpha_{k_2}=\frac{1}{r}(0, \frac{r-3}{2}, \frac{r+3}{2}, k_2)$ and $\alpha_{r-k_2}=\frac{1}{r}(0, \frac{r+3}{2}, \frac{r-3}{2}, r-k_2)$. It is easy to see that $\alpha_{k_2}, \alpha_{r-k_2}\not\equiv \beta\bmod \ZZ^4$ as $\frac{r-3}{2}\pm \frac{r-1}{2}\not \equiv 0 \bmod r$.
 Hence by Claim \ref{lem g=1}, $\alpha_{k_2}(g)=\alpha_{r-k_2}(g)=1$. If $k_2\geq \frac{r}{2}$, then all monomials in $(y,z,t)^3$ have $\alpha_{k_2}$-weights $\geq \frac{3(r-3)}{2r}>1$; if $k_2< \frac{r}{2}$, then all monomials in $(y,z,t)^3$ have $\alpha_{r-k_2}$-weights $\geq \frac{3(r-3)}{2r}>1$ as $r>9$. So this case is excluded.
 
 \bigskip
 
 Therefore, the $cD$-$E$ case is excluded.

 %%END of Hquotient%%%%%%%%%%%%%%%%%%%%%%%%%%%%%%
\section{The $1$-gap theorem for $3$-dimensional non-canonical singularities: the general case}\label{section general case}

Firstly we prove the $1$-gap theorem for surfaces, which  may be well-known to experts.
\begin{lem}\label{gap dim 2}
Let $S$ be a normal quasi-projective $\bQ$-Gorenstein surface. Assume that $\mld(S)<1$, then $\mld(S)\leq \frac{2}{3}$.
\end{lem}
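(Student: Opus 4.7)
The plan is to prove the contrapositive: if $\mld(S) > 2/3$, then $\mld(S) \geq 1$. The case $\mld(S) \leq 0$ is immediate, so I may assume $S$ is klt, in which case its singularities are rational. I will work on the minimal resolution $\pi\colon T \to S$, writing $K_T = \pi^*K_S + \sum_{i=1}^n a_i E_i$ with exceptional primes $E_i$. Standard theory gives that each $E_i$ is a smooth rational curve, and writing $m_i := -E_i^2$, minimality (no $(-1)$-curves) forces $m_i \geq 2$. The negativity lemma, applied via $K_T \cdot E_j = m_j - 2 \geq 0$ and $\pi^*K_S \cdot E_j = 0$, yields $a_i \leq 0$ for every $i$, so the log discrepancies $1+a_i$ lie in $(0,1]$ and satisfy $\mld(S) \leq 1 + a_i$.

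The core of the argument is to intersect the discrepancy equation with each $E_j$. Using adjunction on the smooth rational curve $E_j$, we get
\[
m_j - 2 = -a_j m_j + \sum_{i \neq j} a_i (E_i \cdot E_j).
\]
Since $a_i \leq 0$ and $E_i \cdot E_j \geq 0$ for $i \neq j$, the sum on the right is non-positive; dropping it yields the key inequality $a_j \leq (2 - m_j)/m_j$. Under the hypothesis $\mld(S) > 2/3$, every $a_j > -1/3$, so this forces $(2-m_j)/m_j > -1/3$, i.e.\ $m_j < 3$, whence $m_j = 2$ for all $j$.

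Once every $m_j = 2$, the above equations collapse to $(\sum_i a_i E_i) \cdot E_j = 0$ for all $j$. Negative definiteness of the exceptional intersection form (Mumford) then forces $a_i = 0$ for every $i$, so $\pi$ is crepant and $S$ has canonical singularities. Any further blowup of the smooth surface $T$ contributes only non-negative discrepancies, so every divisor over $S$ has log discrepancy at least $1$, giving $\mld(S) \geq 1$ and contradicting the standing assumption $\mld(S) < 1$.

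I anticipate no real obstacle: the lemma reduces to a single adjunction-plus-negativity computation on the minimal resolution, and the constant $2/3$ emerges naturally from the dichotomy where $a_j > -1/3$ forces $m_j = 2$. The only subtlety worth flagging is that both ingredients — $a_i \leq 0$ via the negativity lemma and $a_i = 0$ from $(\sum a_i E_i) \cdot E_j = 0$ — rely on the exceptional intersection form being negative definite, but this is standard for isolated surface singularities.
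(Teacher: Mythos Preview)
Your proof is correct and uses essentially the same approach as the paper: both arguments compute on the minimal resolution, combining adjunction on an exceptional curve $E_j$ with the non-positivity of the discrepancies to obtain the inequality $a_j \leq (2-m_j)/m_j$ (equivalently, the paper's $-2 \leq (1-a_1)C_1^2$). The only packaging difference is that the paper argues directly---picking a curve with $C^2\leq -3$ and reading off log discrepancy $\leq 2/3$---whereas you take the contrapositive and then invoke negative definiteness to finish; this extra step is just the well-known fact that all $(-2)$-curves forces Du Val, which the paper uses without comment.
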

\begin{proof}
We may assume that $S$ has klt singularities. Take $\pi: S'\to S$ to be the minimal resolution of $S$ and write $K_{S'}+\sum_i a_iC_i=\pi^*K_S$ where $C_i$ are distinct exceptional curves and $1>a_i\geq 0$. 
Since $\mld(S)<1$, it has worse than du Val singularities, and hence there exists an exceptional curve $C$ with $C^2\leq -3$. We may assume that $C_1=C$. Then by the genus formula,
\begin{align*}
-2{}&\leq (K_{S'}+C_1)\cdot C_1\leq (K_{S'}+C_1+\sum_{i\neq1} a_iC_i)\cdot C_1\\
{}&=(1-a_1)C_1^2\leq -3(1-a_1).
\end{align*}
This implies that $a(C; S)=1-a_1\leq \frac{2}{3}$.
\end{proof}

\begin{remark}The number $\frac{2}{3}$ is optimal in  Lemma \ref{gap dim 2}. In fact, the minimal log discrepancy of a cyclic quotient singularity of type $\frac{1}{3}(1,1)$ is $\frac{2}{3}$.
\end{remark}
%\begin{thm}\label{gap dim 3}
%There exists a positive number $\delta<1$ with the following property: if $X$ is a normal quasi-projective $3$-fold with $\mld(X)<1$, then $\mld(X)\leq 1-\delta$.
%\end{thm}

Now we are ready to prove Theorem \ref{gap main},  the $1$-gap theorem for $3$-dimensional non-canonical singularities.
\begin{proof}[Proof of Theorem \ref{gap main}]

Take $\delta=\delta_0$ as in Theorem \ref{no hyperquotient}, where $\delta_0$ is the constant from Lemma \ref{non-canonical lemma}. Recall that $\delta\leq \delta_3$, where $\delta_3$ is the constant from Corollary \ref{acc quot 3}.

%Take $\delta=\delta_0=\min\{\delta_3, \delta_5\}$ as in Theorem \ref{no hyperquotient}, where $\delta_3$ and $\delta_5$ are constants from Corollaries \ref{acc quot 3} and \ref{acc quot 5}.
Assume that there is a normal quasi-projective $\bQ$-Gorenstein $3$-fold $X$ with $1-\delta<\mld(X)<1$, in particular, $X$ is klt.
By Theorem \ref{X<Y}, after replacing $X$, we may assume that $X$ is extremely non-canonical. Let $E_0$ be the unique exceptional divisor over $X$ such that $a(E_0;X)<1$. Let $c_{E_0}(X)$ denote the center of $E_0$ on $X$. By definition, $X$ is terminal outside $c_{E_0}(X)$. As $3$-dimensional terminal singularities are isolated, by shrinking $X$, we may assume that $X$ is smooth outside $c_{E_0}(X)$.

If the center $c_{E_0}(X)=C$ is a curve, we can take a general hyperplane section $H\subset X$ intersecting $C$. Here $H$ is a normal quasi-projective  $\bQ$-Gorenstein surface and $\mld (H)\geq \mld (X)$ by the Bertini theorem (\cite[Lemma 5.17]{KM}). On the other hand, by the inversion of adjucntion (\cite[Corollary 1.4.5]{BCHM}), $\mld(H)\leq a(E_0; X, H)=a(E_0; X)=\mld(X)$. Hence $1-\delta<\mld(H)<1$. But this contradicts Lemma \ref{gap dim 2}. 

So we may assume that $c_{E_0}(X)=P$ and $(P\in X)$ is an isolated extremely non-canonical klt singularity with $\mld(X)>1-\delta$. Denote $r$ to be the minimal positive integer such that $rK_X$ is Cartier and take $(Q\in Y)$ to be the canonical index $1$ cover of $(P\in X)$. Then $(Q\in Y)$ is an isolated index one canonical singularity. By the classification of $3$-dimensional index one canonical singularities (see \cite{Reid80} or \cite[5.3]{KM}), there are 3 cases: $(Q\in Y)$ is smooth; $(Q\in Y)$ is an isolated cDV singularity; $(Q\in Y)$ is an isolated non-cDV singularity.

If $(Q\in Y)$ is an isolated non-cDV singularity, then there exists an exceptional divisor $E'$ over $Y$ centered at $Q$ such that $a(E'; Y)=1$. Hence by the ramification formula (see, for example, the calculation in \cite[(20.3) Proposition]{Kollar92} or \cite[Proposition 5.20]{KM}), there exists an exceptional divisor $E$ over $X$ such that $n\cdot a(E; X)=a(E'; Y)=1$ for some positive integer $n$. Since $X$ is extremely non-canonical, $n>1$. Therefore $\mld(X)\leq a(E; X)\leq \frac{1}{2}$, a contradiction.

If $(Q\in Y)$ is smooth, then $(P\in X)$ is an isolated  cyclic quotient singularity and this contradicts Corollary \ref{acc quot 3}.

If $(Q\in Y)$ is an isolated cDV singularity, then we get a contradiction by Theorem \ref{no hyperquotient}.

In summary, such a normal quasi-projective  $\bQ$-Gorenstein $3$-fold $X$ with $1-\delta<\mld(X)<1$ does not exist, and the theorem is proved.
\end{proof}

\section{Boundedness of global indices of klt Calabi--Yau $3$-folds}\label{section bounded}
In this section, we give applications for Theorem \ref{gap main}. We show that the set of all non-canonical klt Calabi--Yau $3$-folds are bounded modulo flops, and the global indices of all klt Calabi--Yau $3$-folds are bounded from above. To be more precise, we show the followings:

\begin{thm}\label{bdd flop cy}
The set of non-canonical klt Calabi--Yau $3$-folds forms a bounded family modulo flops.
\end{thm}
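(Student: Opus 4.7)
The plan is to combine Theorem~\ref{gap main} with the reduction to extremely non-canonical singularities (Theorem~\ref{X<Y}) and then invoke boundedness results for log Calabi--Yau pairs with controlled singularities.

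Let $X$ be a non-canonical klt Calabi--Yau $3$-fold. First, by Theorem~\ref{gap main}, $\mld(X)\le 1-\delta$ for the uniform constant $\delta>0$. Applying the reduction in Theorem~\ref{X<Y} (or directly via \cite[Corollary 1.4.3]{BCHM}), I extract the unique divisor $E_0$ computing $\mld(X)$ to obtain a projective birational morphism $f\colon Y\to X$ with $Y$ extremely non-canonical, whose exceptional locus is precisely $E_0$. Writing $K_Y+tE_0=f^*K_X$ with $t=1-\mld(X)\in[\delta,1)$ and using $K_X\equiv 0$, one finds that $(Y,tE_0)$ is a klt log Calabi--Yau pair whose single boundary component has coefficient in the closed interval $[\delta,1)$. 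Note that the extreme non-canonicity forces $\mld(Y)=1$, so $Y$ itself is canonical and $-K_Y\equiv tE_0$ is effective.

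Second, I aim to show that the set of such pairs $(Y,tE_0)$ is log bounded modulo flops, which by $f$-contraction of the single prime boundary component immediately yields the desired boundedness of $X$ modulo flops. Here I would split into two cases according to the MRC fibration $Y\dashrightarrow Z$. If $\dim Z=0$, i.e.\ $Y$ (equivalently $X$) is rationally connected, then the main result of \cite{rccy3}, which is conditional on the gap theorem now established, applies directly. If $\dim Z\ge 1$, I run an MMP on the MRC quotient to produce a genuine fibration $Y'\to Z'$ with $Y'$ birational to $Y$ in codimension one; the general fiber is rationally connected klt, and by the canonical bundle formula applied to $K_Y+tE_0\equiv 0$, the base $Z'$ acquires a generalised klt log Calabi--Yau structure of dimension $1$ or $2$. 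Boundedness of such bases follows from classical facts about curves and from Alexeev's theorem \cite{Ale94} (combined with Lemma~\ref{gap dim 2}) in dimension $2$, while the relative rationally connected structure can be bounded in families by the techniques of \cite{rccy3}.

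The main obstacle is the non-rationally-connected case: one must combine boundedness of the lower-dimensional base with relative boundedness of the rationally connected fibers in a way that is compatible with flops, and verify that Birkar-type boundedness for $\epsilon$-lc log Calabi--Yau fibrations applies to $(Y,tE_0)\to Z'$ despite the fact that the coefficient $t$ can approach $1$ (so that uniform $\epsilon$-lc bounds on $(Y,tE_0)$ fail a priori). The key leverage is that $Y$ itself is \emph{canonical}, so after running an MMP on $E_0$ one can separately bound $Y$ (as a canonical $3$-fold with $-K_Y$ effective and with one distinguished prime divisor) and the coefficient $t\in[\delta,1)$, and then package the two pieces of information together. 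Once this log boundedness modulo flops is established, the contraction $f\colon Y\to X$ is encoded by $(Y,tE_0)$ and the boundedness of $X$ modulo flops follows.
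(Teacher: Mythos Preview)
Your proposal contains a genuine confusion and misses the key ingredient that resolves your ``main obstacle.''

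First, your claims about $Y$ are internally contradictory: you assert that $Y$ is extremely non-canonical and, in the next sentence, that $\mld(Y)=1$. By definition an extremely non-canonical variety has $\mld<1$. What Theorem~\ref{X<Y} produces is a model \emph{over} $X$ that is still non-canonical; it does not extract a single divisor, and the result is not canonical. If instead you simply extract one divisor $E$ with $a(E;X)\le 1-\delta$ via \cite[Corollary~1.4.3]{BCHM} (which is what the paper does), there is no reason for the resulting $Y$ to be canonical either. Your ``key leverage'' in the non-rationally-connected case rests on this false premise, so that part of the argument collapses.

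Second, and more importantly, your worry that $t$ may approach $1$ and that $(Y,tE)$ may fail to be uniformly $\epsilon$-lc is exactly the issue, but the resolution is not the canonicity of $Y$: it is Global ACC \cite[Theorem~1.5]{HMX14} (see \cite[Lemma~3.12]{rccy3}), which you do not invoke. Since $X$ is a klt Calabi--Yau $3$-fold with zero boundary, Global ACC gives a uniform $\epsilon>0$ with $X$ $(2\epsilon)$-lc; then $(Y,(1-a)E)$, being crepant to $X$, is also $(2\epsilon)$-lc and in particular $1-a\le 1-2\epsilon$. Once this is in place, the paper avoids your MRC case split entirely: $E$ is uniruled by \cite{HM07}, so $(Y,(1-a)E)$ satisfies the hypotheses of Proposition~\ref{cor.bdd.lcy3fold}, which runs a $(K_Y+B')$-MMP to a Mori fiber space and applies Birkar's boundedness of log Calabi--Yau fibrations \cite[Theorem~1.2]{Birkar18} uniformly, regardless of whether $X$ is rationally connected. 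Finally, recovering boundedness of $X$ from log boundedness of $(Y,(1-a)E)$ is not immediate from ``$f$-contraction''; it requires running an MMP in the family to contract the boundary simultaneously, as in \cite[Theorem~5.1]{rccy3}.
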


\begin{cor}\label{bdd index cy}
There exists a positive integer $m$ such that for any klt Calabi--Yau $3$-fold $X$, $mK_X\sim 0$.
\end{cor}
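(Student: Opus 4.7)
The plan is to split klt Calabi--Yau $3$-folds into canonical and non-canonical cases. For canonical Calabi--Yau $3$-folds, the existence of a uniform $m_1 > 0$ with $m_1 K_X \sim 0$ is already established by Kawamata \cite{K=0} and Morrison \cite{Morrison}, as noted in the introduction. So the substantive task is to bound the global indices uniformly in the non-canonical klt case, which I would deduce from Theorem~\ref{bdd flop cy}.

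Given a non-canonical klt Calabi--Yau $3$-fold $X$, Theorem~\ref{bdd flop cy} supplies a projective morphism $h\colon\mathcal{Z}\to S$ between schemes of finite type, together with a small birational map $f\colon\mathcal{Z}_s\dashrightarrow X$ from some fiber $\mathcal{Z}_s$ with $K_{\mathcal{Z}_s}$ $\bQ$-Cartier. The first key observation is that the global index is invariant under the small birational map $f$: since $X$ and $\mathcal{Z}_s$ are isomorphic in codimension one, strict transform induces a bijection on Weil divisor classes modulo linear equivalence, and this bijection sends $K_X$ to $K_{\mathcal{Z}_s}$ because neither $f$ nor $f^{-1}$ contracts a divisor. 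Consequently $mK_X\sim 0$ if and only if $mK_{\mathcal{Z}_s}\sim 0$, so it suffices to bound the global index of $K_{\mathcal{Z}_s}$ uniformly in $s\in S$.

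For this uniform bound, I would argue by Noetherian stratification. By \cite{G}, every fiber satisfies $K_{\mathcal{Z}_s}\sim_\bQ 0$, so the global index $m(s)$ is a well-defined positive integer for each $s\in S$. After possibly passing to a finite stratification of $S$ (and a finite cover to guarantee a relative canonical divisor with controlled $\bQ$-Cartier index) one checks that $m(s)$ is constructible in $s$, essentially by studying torsion in the relative Picard scheme $\mathrm{Pic}(\mathcal{Z}/S)$ on each stratum. Since $S$ is of finite type, $m(s)$ takes only finitely many values; let $m_2$ be their least common multiple, and set $m=\mathrm{lcm}(m_1,m_2)$.

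The main obstacle I anticipate is precisely the constructibility of $m(s)$: one must show that the bounding family $h\colon\mathcal{Z}\to S$ can be arranged so that the global-index function behaves well fiberwise, e.g.\ by reducing to the situation where $h$ is flat with normal fibers and the dualizing sheaf commutes with base change on the appropriate locus. This is a routine ingredient in boundedness results for Calabi--Yau varieties, and the ``bounded modulo flops'' hypothesis as formulated in Subsection~\ref{sec.bdd} is exactly what is needed to make it work. Everything else in the argument is formal.
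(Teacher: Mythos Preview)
Your overall strategy matches the paper's: split into canonical and non-canonical, cite Kawamata--Morrison for the former, and for the latter use Theorem~\ref{bdd flop cy} together with the invariance of the global index under isomorphism in codimension one to reduce to bounding indices uniformly in a bounded family. That reduction is exactly what the paper does.

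The divergence, and the only real gap, is in how you extract a uniform bound on $m(s)$ from boundedness. You propose constructibility via torsion in a relative Picard scheme and describe this as ``routine''; it is not. First, a small error: it is not true that ``every fiber satisfies $K_{\mathcal{Z}_s}\sim_\bQ 0$''. The definition of bounded modulo flops only guarantees that the fibers \emph{corresponding to some $X$} are normal with $\bQ$-Cartier canonical class; the other fibers of $h$ can be arbitrary, so there is no well-defined global index on all of $S$ and a naive Picard-scheme argument over $S$ does not get off the ground. Second, even after stratifying to get flatness and normal fibers, controlling torsion in $\mathrm{Pic}^0$ for singular varieties in families is delicate; this is not a formality.

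The paper handles this step via Lemma~\ref{mKY}, and the mechanism is worth knowing: after stratifying and taking a log resolution $(\mathcal{W}_i,\mathcal{E}_i)\to\mathcal{Z}_i$ that is log smooth over $T_i$, one uses the deformation invariance of log plurigenera \cite[Theorem~4.2]{HMX18} to conclude that $h^0(\mathcal{W}_{i,t},\lfloor m(K_{\mathcal{W}_{i,t}}+(1-\epsilon)\mathcal{E}_{i,t})\rfloor)$ is independent of $t$ for \emph{all} $t\in T_i$. For those $t$ where $\mathcal{Z}_{i,t}$ is an $\epsilon$-lc Calabi--Yau variety, this quantity equals $h^0(\mathcal{Z}_{i,t},mK_{\mathcal{Z}_{i,t}})$, and the latter is $1$ exactly when $mK_{\mathcal{Z}_{i,t}}\sim 0$. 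Here the uniform $\epsilon$ comes from Global ACC \cite[Theorem~1.5]{HMX14}. So the ``constructibility'' you want is obtained not through $\mathrm{Pic}$ but through invariance of plurigenera on a simultaneous log resolution; this is the substantive input you are missing.
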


Recall that a variety is {\it uniruled} if it is covered by rational curves. The following lemma may be well-known to experts. 
\begin{lem}\label{non-canonical=uniruled}
Let $X$ be a klt Calabi--Yau variety. Then $X$ is non-canonical if and only if $X$ is uniruled.
\end{lem}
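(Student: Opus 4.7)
The plan is to invoke the uniruledness criterion of Miyaoka--Mori and Boucksom--Demailly--P\u{a}un--Peternell, stating that a klt projective variety $Y$ is uniruled if and only if $K_Y$ is not pseudo-effective. With this in hand, both directions become routine birational computations using $K_X\equiv 0$.

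For the forward direction, I would assume $X$ is canonical and take any resolution $\pi\colon Y \to X$. Writing
\[
K_Y = \pi^*K_X + \sum_i a_i E_i
\]
with $a_i \geq 0$ by canonicity, and using $K_X\equiv 0$, we see that $K_Y$ is numerically equivalent to an effective divisor, hence pseudo-effective. So $Y$ is not uniruled, and neither is $X$ by birational invariance of uniruledness.

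Conversely, assume $X$ is non-canonical and choose a prime divisor $E$ over $X$ with $a(E;X)<1$. Using \cite[Corollary 1.4.3]{BCHM} I would extract $E$, producing a projective birational morphism $f\colon Y \to X$ with $Y$ $\mathbb{Q}$-factorial klt whose unique $f$-exceptional divisor is $E$. Then
\[
K_Y = f^*K_X + (a(E;X)-1)\,E \equiv -(1-a(E;X))\,E,
\]
so $-K_Y$ is numerically equivalent to a non-zero effective divisor, the non-vanishing coming from the negativity lemma applied to $f$ (giving a curve $C\subset E$ contracted by $f$ with $E\cdot C<0$). If $K_Y$ were also pseudo-effective, then $K_Y\equiv 0$ would force $E\equiv 0$, a contradiction. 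Hence $K_Y$ is not pseudo-effective, $Y$ is uniruled by the criterion above, and therefore $X$ is uniruled.

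The only mild obstacle I anticipate is ensuring the uniruledness criterion is available in the klt (rather than smooth) setting; this is standard but must be invoked carefully. Everything else is just a bookkeeping assembly of standard tools: canonical discrepancy inequalities, divisor extraction via BCHM, and the negativity lemma.
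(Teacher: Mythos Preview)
Your proof is correct and follows essentially the same route as the paper's: both directions use the BDPP criterion together with a resolution (for canonical $\Rightarrow$ not uniruled) and a BCHM extraction of a divisor with small log discrepancy (for non-canonical $\Rightarrow$ uniruled). The only cosmetic difference is that you invoke the negativity lemma to see $E\not\equiv 0$, whereas it suffices to note that a nonzero effective divisor on a projective variety has positive intersection with $H^{\dim Y-1}$ for ample $H$, so $K_Y\cdot H^{\dim Y-1}=-\,(1-a(E;X))\,E\cdot H^{\dim Y-1}<0$ already rules out pseudo-effectivity; your concern about BDPP in the klt setting is handled, as you suspected, by passing to a resolution and using that pushforward preserves pseudo-effectivity.
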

\begin{proof}
Suppose that $X$ is uniruled. Then by taking a resolution $\phi: Y \to X$, $Y$ is again uniruled, which implies that $K_Y$ is not pseudo-effective. Assume, to the contrary, that $X$ is canonical, then $K_Y\geq \phi^*K_X$, and therefore $K_X$ is not psuedo-effective, which contradicts $K_X\equiv 0$.

Suppose that $X$ is non-canonical, then there exists an exceptional divisor $E$ over $X$ with log discrepancy $<1$. By \cite[Corollary 1.4.3]{BCHM}, there is a projective birational morphism $\phi: Y \to X$ extracting $E$. We can write $K_Y+aE=\phi^*K_X\equiv 0$ with $a>0$, which means that $K_Y$ is not psuedo-effective. But this implies that $Y$ is uniruled by \cite{BDPP}, and so is $X$. 
\end{proof}

The key is to show the following proposition (comparing with \cite[Corollary 4.2]{rccy3}).

\begin{prop}\label{cor.bdd.lcy3fold}
Fix positive real numbers $\epsilon$, $\delta$. 
Then, the set of log pairs $(X,B)$ satisfying
\begin{enumerate}
\item $(X,B)$ is an $\epsilon$-lc log Calabi--Yau 
pair of dimension $3$,
%\item $B >0$, 
\item there is a component of $\text{\rm Supp}\,B$ which is uniruled, and
\item the non-zero coefficients of $B$ are at least $\delta$,
\end{enumerate}
forms a log bounded family modulo flops.
\end{prop}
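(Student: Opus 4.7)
The plan is to reduce the three-dimensional statement to a two-dimensional boundedness result via adjunction to the uniruled component of $B$, and then to lift the resulting boundedness back to $X$ using Birkar's machinery of bounded complements.

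First I would $\bQ$-factorialize $X$ via a small birational modification, which preserves the $\epsilon$-lc property, the log Calabi--Yau structure and the coefficient lower bound $\delta$ (so passing to this model does not affect the conclusion "bounded modulo flops"). Let $S \subset \mathrm{Supp}(B)$ be the uniruled component with coefficient $b_S \geq \delta$, and write $B = b_S S + B^{\flat}$. Applying adjunction along the normalization $\nu\colon S^{\nu} \to S$, one obtains the different $B_S$ with
\[
K_{S^{\nu}} + B_S \;=\; \nu^{*}(K_X + B)|_{S} \;\equiv\; 0.
\]
A direct analysis of the different (together with the precise inversion of adjunction available in dimension three) shows that $(S^{\nu}, B_S)$ is $\epsilon'$-lc for some $\epsilon' = \epsilon'(\epsilon, \delta) > 0$ and that the non-zero coefficients of $B_S$ belong to a DCC set determined by $\epsilon$ and $\delta$.

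Since $S$, and hence $S^{\nu}$, is uniruled, the pair $(S^{\nu}, B_S)$ is a uniruled $\epsilon'$-lc log Calabi--Yau surface pair with DCC coefficients. Such pairs form a log bounded family by Alexeev's boundedness theorem (\cite{Ale94}; see also the surface version implicit in \cite{AM, DCS}). To upgrade this to log boundedness of $(X, B)$ modulo flops, I would invoke Birkar's boundedness of complements (\cite{Birkar18}), applied to the Fano-type pair obtained by running a suitable $(K_X + B - b_S S)$-MMP, to produce a bounded positive integer $N$ with $N(K_X + B) \sim 0$ and bounded Cartier index along $S$. Combining this bounded index with the already-established log boundedness of $(S^{\nu}, B_S)$ and volume bounds for polarizations coming from multiples of $-K_X$, Hilbert-scheme arguments then yield boundedness of $(X, B)$ in codimension one, which is the precise definition of log boundedness modulo flops given in Subsection \ref{sec.bdd}.

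The main obstacle I foresee is the lifting step from $S$ to $X$: a priori infinitely many non-birational three-folds could contain a given bounded surface as a boundary divisor, and ruling out unbounded families of such ambient spaces genuinely requires the log Calabi--Yau structure together with Birkar's boundedness of complements (so that the ambient Cartier index is uniformly bounded). A secondary, more routine, technical point will be confirming that the different $B_S$ indeed has DCC (in fact finite, once $\epsilon$ and $\delta$ are fixed) coefficients; this should follow from standard explicit formulas for the different applied to pairs with coefficients bounded below by $\delta$ (cf.\ the DCC considerations in \cite{HLS19}).
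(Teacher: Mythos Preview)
Your adjunction step does not work as stated. Adjunction to a component $S$ of the boundary gives a different only when $S$ appears with coefficient $1$: one has $K_{S^{\nu}}+\mathrm{Diff}_{S^{\nu}}(B^{\flat})=(K_X+S+B^{\flat})|_{S^{\nu}}$. But here $b_S\leq 1-\epsilon<1$, and $K_X+S+B^{\flat}=K_X+B+(1-b_S)S\equiv (1-b_S)S$ is \emph{not} numerically trivial. So your formula $K_{S^{\nu}}+B_S=\nu^{*}(K_X+B)|_S\equiv 0$ forces $B_S=\mathrm{Diff}_{S^{\nu}}(B^{\flat})-(1-b_S)\,S|_{S^{\nu}}$, and there is no reason for this to be effective (the ``self-intersection'' term can have either sign). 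Consequently $(S^{\nu},B_S)$ is not in general an $\epsilon'$-lc log Calabi--Yau surface pair, and Alexeev's theorem does not apply. The subsequent lifting step is also too vague to evaluate: even granting boundedness on $S$, you have not explained which statement from \cite{Birkar18} produces a uniform Cartier index on $X$ from data on $S$ alone, nor how that index combines with a surface bound to control $(X,B)$.

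The paper's argument avoids adjunction entirely. After $\bQ$-factorializing, one writes $B=B'+dD$ with $D$ the uniruled component and runs a $(K_X+B')$-MMP, which is simultaneously a $(-dD)$-MMP, to a Mori fiber space $f\colon Y\to Z$; the point is that $D_Y$ then dominates $Z$, so $Z$ is uniruled. One bounds $Z$ directly (trivial if $\dim Z\leq 1$; for $\dim Z=2$ use \cite[Corollary 1.7]{Bir16a} to get an $\epsilon'$-lc log Calabi--Yau structure on $Z$ and then Alexeev's theorem, the uniruled hypothesis ruling out the unbounded canonical $K_Z\equiv 0$ case). With $Z$ bounded, $(Y,B_Y)\to Z$ is a $(3,r,\epsilon)$-Fano type log Calabi--Yau fibration in the sense of \cite{Birkar18}, so $Y$ is bounded by \cite[Theorem 1.2]{Birkar18}; log boundedness of $(Y,B_Y)$ follows from the coefficient bound $\delta$, and one recovers $(X,B)$ modulo flops via \cite[Proposition 4.8]{rccy3}. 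The uniruled component is thus used not for adjunction but to guarantee that the base of the Mori fiber space is itself uniruled, which is exactly what is needed to invoke surface boundedness there.
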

\begin{proof}
We may replace $X$ by its small $\bQ$-factorialization (by \cite[Corollary 1.4.3]{BCHM}) and assume that $X$ is $\bQ$-factorial.
We may write $B=B'+dD$, where $D$ is a uniruled component of $B$ and $d>0$.
We can run a $(K_X+B')$-MMP 
with scaling of an ample divisor
which ends with a Mori fiber space $f:Y \to Z$.
Denote by $B_{Y}$, $D_Y$ the strict transforms of $B$,  $D$ on $Y$.
Since  $K_X+B'+dD \equiv 0$, and we are running a $(K_X+B')$-MMP, 
it follows that $D_{Y}$ is uniruled and dominates $Z$. Also note that $(Y, B_Y)$ is again 
an $\epsilon$-lc log Calabi--Yau pair, %$Y$ is uniruled, 
and coefficients of $B_Y$ are at least $\delta$. 

We claim that $Z$ is in a bounded family. If $Z$ is a point, then there is nothing to prove. If $\dim Z=1$, then according to Ambro's canonical bundle formula (see \cite[Theorem 3.1]{FG}), $-K_Z$ is pseudo-effective, which means that $Z$ is either $\bP^1$ or an elliptic curve, which is in a bounded family. If $\dim Z=2$, then by \cite[Corollary 1.7]{Bir16a}, there exists an effective 
$\bR$-divisor $\Delta$ such that $(Z, \Delta)$ is $\epsilon'$-klt and
$K_Z+\Delta\sim_\bR 0$, where $\epsilon'$ is a positive number depending only on $\epsilon$.
Since $D_{Y}$ dominates $Z$, $Z$ is also uniruled. In particular,  by Lemma \ref{non-canonical=uniruled}, we are not in the case that $K_Z\equiv 0$, $\Delta=0$, and $Z$ has canonical singularities. Therefore, by \cite[Theorem 6.9]{Ale94}, such $Z$ is in a bounded family.

As $Z$ is in a bounded family,  we may find a very ample divisor $A$ on $Z$, and a positive integer $r$ independent of $X$ such that $A^{\dim Z}\leq r$. Here if $Z$ is a point, we just formally define 
$A^{\dim Z}=1$.
Therefore, $(Y, B_Y)\to Z$ is a $(3, r, \epsilon)$-Fano type log Calabi--Yau fibration in the sense of \cite[Definition 1.1]{Birkar18}. Hence by \cite[Theorem 1.2]{Birkar18}, such $Y$ is in a bounded family. 
Here instead of using  \cite[Theorem 1.2]{Birkar18}, we can also use \cite[Theorem 1.4]{BirkarBAB1} and \cite[Theorem 4.6]{rccy3} to conclude the boundedness of $Y$.
As the coefficients of $B_Y$ are at least $\delta$, it is easy to see that the pair
$(Y, B_Y)$ is in a log bounded family. In fact, as $Y$ is bounded, we can find a very ample divisor $H$ on $Y$ such that $H^3\leq r'$ and $H^2\cdot (-K_Y)\leq r'$ for some positive integer $r'$ independent of $Y$, then $H^2\cdot \Supp(B_Y)\leq \frac{1}{\delta}H^2\cdot B_Y= \frac{1}{\delta} H^2\cdot (-K_Y)\leq \frac{r'}{\delta}$, and we can use \cite[Lemma 2.20]{BirkarBAB1} to conclude the log boundedness. For any prime divisor $E$ on $X$ which is exceptional over $Y,$ we have
$$a(E;Y,B_Y ) = a(E;X,B) \leq a(E;X,0) = 1.$$
Hence, $(X, B)$ is in a log bounded family modulo flops by \cite[Proposition 4.8]{rccy3} by extracting all such $E$ simultaneously in the log bounded family of $(Y, B_Y)$.
\end{proof}

%\begin{remark}
%As we are in dimension $3$, we can give an alternative proof for Proposition \ref{cor.bdd.lcy3fold} without using the more general result \cite[Theorem 1.2]{Birkar18}. We outline the proof here: 
%\end{remark}

Now we are ready to present the proof of Theorem \ref{bdd flop cy}. It is almost the same as that of \cite[Theorem 5.1]{rccy3}, the essential modifications are that we remove the condition on minimal log discrepancies by Theorem \ref{gap main}, and remove the rational connectedness condition by Proposition \ref{cor.bdd.lcy3fold}.

\begin{proof}[Proof of Theorem \ref{bdd flop cy}]
Consider a non-canonical klt Calabi--Yau $3$-fold $X$. By Theorem \ref{gap main}, there exists a constant $0<\delta<1$ independent of $X$ such that $\mld(X)\leq 1-\delta$.
By \cite[Corollary 1.4.3]{BCHM}, we may take a projective birational morphism $\pi: Y \to X$ extracting only one exceptional divisor $E$ with log discrepancy $a=a(E; X)\leq 1-\delta$. We can write
\[
K_Y+(1-a)E = \pi^* K_X\equiv 0.
\]
Also by Global ACC  \cite[Theorem 1.5]{HMX14} (see \cite[Lemma 3.12]{rccy3}), there exists a constant $\epsilon \in (0,\frac{1}{2})$ such that $X$ is $(2\epsilon)$-lc, and therefore $(Y, (1-a)E)$ is a $(2\epsilon)$-lc log Calabi--Yau pair with $1-a\geq \delta>0$. 
%By Lemma \ref{non-canonical=uniruled}, $X$ is uniruled, hence $Y$ and $E$ are uniruled by \cite{HM07}. 
Here $E$ is uniruled by \cite{HM07}.

Now we can apply Proposition \ref{cor.bdd.lcy3fold} to see that the pairs $(Y, (1-a)E)$ are log bounded modulo flops.
That is, there are finitely many quasi-projective normal varieties $\mathcal{W}_i$, a reduced divisor $\mathcal{E}_i$ on $\mathcal W_i$, and a projective morphism $\mathcal{W}_i\to S_i$, where $S_i$ is a normal variety of finite type and $\mathcal{E}_i$ does not contain any fiber, such that
for every $(Y, (1-a)E)$, there is an index $i$, a closed point $s \in S_i$, and a small birational
map $f : {\mathcal{W}_{i,s}} \dashrightarrow Y$
such that $\mathcal{E}_{i,s}=f_*^{-1}E$. We may assume that the set of points $s$ corresponding to such $Y$ is dense in each $S_i$.
We may just consider a fixed index $i$ and ignore the index in the following argument.

Now we are going to prove that $X$ is bounded modulo flops by contracting $E$ simultaneously in the bounded family $(\mathcal{W}, \mathcal{E})$. The argument is exactly the same as the latter half of   \cite[Theorem 5.1]{rccy3}.

For the point $s$ corresponding to $(Y,(1-a)E)$, $$K_{{\mathcal{W}_s}}+(1-a)f_*^{-1}E\equiv f_*^{-1}(K_Y+(1-a)E)\equiv 0$$ and therefore $({\mathcal{W}_s}, (1-a)f_*^{-1}E)$ is a $(2\epsilon)$-lc log Calabi--Yau pair.  
Now consider a log resolution $g: \mathcal{W}'\to \mathcal{W}$ of $(\mathcal{W},\mathcal{E})$ and denote by $\mathcal{E}'$ the strict transform of $\mathcal{E}$ and the sum of all $g$-exceptional reduced divisors on $\mathcal{W}'$. Consider the log pair $(\mathcal{W}',(1-\epsilon)\mathcal{E}')$. 
There exists an open dense set $U\subset S$ such that for the point $s\in U$ corresponding to $(Y, (1-a)E)$, 
$g_s: \mathcal{W}'_s\to \mathcal{W}_s$ is a log resolution and we can write
$$
K_{\mathcal{W}'_s}+B_s=g_s^*(K_{{\mathcal{W}_s}}+(1-a)f_*^{-1}E)\equiv 0
$$
where the coefficients of $B_s$ are $\leq 1-2\epsilon$ and its support is contained in $\mathcal{E}'_s=\mathcal{E}'|_{{\mathcal{W}'_s}}$. We have
\begin{align*}
(K_{\mathcal{W}'}+(1-\epsilon)\mathcal{E}')|_{{\mathcal{W}'_s}}
\equiv {}&K_{\mathcal{W}'_s}+(1-\epsilon)\mathcal{E}'_s\\
\equiv {}&(1-\epsilon)\mathcal{E}'_s-B_s\geq 0.
\end{align*}
Note that the support of $(1-\epsilon)\mathcal{E}'_s-B_s$ coincides with that of $\mathcal{E}'_s$ which are precisely the divisors on $\mathcal{W}'_s$ exceptional over $X$. Hence $(K_{\mathcal{W}'}+(1-\epsilon)\mathcal{E}')$ is of Kodaira dimension zero on the fiber $\mathcal{W}'_s$ and we can run a $(K_{\mathcal{W}'}+(1-\epsilon)\mathcal{E}')$-MMP with scaling of an ample divisor over $S$ to get a relative minimal model $\tilde{\mathcal{W}}$ over $S$. Such MMP terminates by \cite[Corollary 2.9, Theorem 2.12]{HX13}. Note that for the point $s\in U$ corresponding to $(Y,(1-a)E)$, $\mathcal{E}'_s$ is contracted by this MMP and hence $\tilde{\mathcal{W}}_s$ is isomorphic to $X$ in codimension one. This gives a bounded family modulo flops, over $U$.
Applying Noetherian induction on $S$, the family of all such $X$ is bounded modulo flops. 
\end{proof}

%\begin{remark}
%The proof of Theorem \ref{bdd flop cy} relies on Proposition \ref{cor.bdd.lcy3fold}, which uses a boundedness theory developed recently by Birkar \cite{Birkar18}.  Birkar's theory is very powerful and is established for any dimension, but as we are working in dimension $3$, we can give an alternative proof of Theorem \ref{bdd flop cy} by using known results in dimension $3$. Here we outline the alternative proof briefly. Given a non-canonical klt Calabi--Yau $3$-fold $X$, by Theorem \ref{gap main}, we can find a birational model $\pi: X'\to X$ such that $X'$ is $\bQ$-factorial terminal, $K_{X'}+B=\pi^*K_X$, $B\neq 0$, and coefficients of $B$ are at least $\delta$. Running a $K_{X'}$-MMP, we get a terminal Mori fiber space $(X'', B'')\to T$. Then  \cite{rccy3} proves that $X''$ is birationally bounded. Furthermore, using the method in \cite{MFS3}, we can actually show that the Cartier index of $K_{X''}$ is bounded and also $X''$ is bounded. This implies that $(X'', B'')$ is bounded. Finally we can work as the latter part of proof of Theorem \ref{bdd flop cy} to show that $X$ is bounded modulo flops by extracting exceptional divisors on $X$ over $X''$ and contracting exceptional divisors on $X''$ over $X$ simultaneously in the bounded family.
%\end{remark}

Before proving Corollary \ref{bdd index cy}, we show the boundedness of global indices in a bounded family.

\begin{lem}\label{mKY}
Let $\mathcal{D}$
 be a bounded family of projective varieties.
 Then there exists a positive integer $m$ such that if $Y\in \mathcal{D}$ is a klt Calabi--Yau variety, then $mK_Y\sim 0$.
 \end{lem}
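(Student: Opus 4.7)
The plan is to argue by Noetherian induction on the base of a bounding family. Fix a projective morphism $h\colon \mathcal{Z}\to S$ with $S$ of finite type, such that each $Y\in \mathcal{D}$ is isomorphic to some fiber $\mathcal{Z}_s$. By stratifying $S$ into finitely many locally closed subschemes and working on each piece separately, I may assume that $S$ is integral, that $h$ is flat, and that the constructible subset of $S$ whose fibers are klt Calabi--Yau varieties is either empty or all of $S$; the empty case is vacuous, so it suffices to exhibit, for each such $S$ of the second type, a positive integer $m_0$ and a non-empty open $U\subset S$ with $m_0 K_{\mathcal{Z}_s}\sim 0$ for every $s\in U$. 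Noetherian induction on the closed complement $S\setminus U$ then produces the desired universal $m$.

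To construct such an $m_0$, I look at the generic fiber $Y_\eta=\mathcal{Z}_\eta$, which by assumption is a klt Calabi--Yau variety. Abundance for klt varieties with numerically trivial canonical divisor yields a positive integer $m_0$ such that $m_0 K_{Y_\eta}\sim 0$. After shrinking $S$, I may assume that $m_0 K_{\mathcal{Z}/S}$ is Cartier and that the formation of $h_*\OO_{\mathcal{Z}}(m_0 K_{\mathcal{Z}/S})$ commutes with base change; the nowhere-vanishing global section of $m_0 K_{Y_\eta}$ then extends to a relative section of $m_0 K_{\mathcal{Z}/S}$ whose vanishing locus is a proper closed subset of $\mathcal{Z}$ not meeting $Y_\eta$, and hence missing $\mathcal{Z}_s$ for $s$ in some non-empty open $U\subset S$. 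For each such $s$ we conclude $m_0 K_{\mathcal{Z}_s}\sim 0$.

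The principal technical point is to guarantee that $m_0 K_{\mathcal{Z}/S}$ is Cartier in a relative neighborhood of the generic fiber and that its pushforward commutes with base change, since the members of $\mathcal{D}$ are singular. Both are constructible properties on $S$: the global Cartier index of the canonical divisor of a klt variety is constant on each stratum of a suitable refinement of $S$, and Grauert's base change theorem applies because $h^0(\mathcal{Z}_s, m_0 K_{\mathcal{Z}_s})=1$ on every klt Calabi--Yau fiber (as soon as one such $m_0$ works on $Y_\eta$, semicontinuity forces this $h^0$ to equal $1$ on nearby fibers). Once these preparations are in place the Noetherian induction closes, producing a uniform $m$ that bounds the global indices of all klt Calabi--Yau $Y\in \mathcal{D}$.
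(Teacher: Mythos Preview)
Your approach is correct and takes a genuinely different route from the paper's. The paper first invokes Global ACC to obtain a uniform $\epsilon$ such that every klt Calabi--Yau variety in $\mathcal{D}$ is $\epsilon$-lc, then stratifies the base and passes to a simultaneous log resolution $(\mathcal{W}_i,\mathcal{E}_i)\to T_i$ that is log smooth over $T_i$. The decisive input is deformation invariance of log plurigenera \cite[Theorem 4.2]{HMX18}: the quantity $h^0(\mathcal{W}_{i,t},\lfloor m(K_{\mathcal{W}_{i,t}}+(1-\epsilon)\mathcal{E}_{i,t})\rfloor)$ is independent of $t$, and for $\epsilon$-lc Calabi--Yau fibers this coincides with $h^0(mK_{\mathcal{Z}_{i,t}})$. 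Hence the global index is constant among the relevant fibers of each stratum.

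Your argument bypasses both Global ACC and invariance of plurigenera, using only abundance on the generic fiber together with elementary spreading out and Noetherian induction. This is a lighter set of inputs. On the other hand, the paper's route has the advantage that the invariance statement is applied on the smooth total space $\mathcal{W}_i$, so one never needs to worry about whether the klt Calabi--Yau locus in $S$ is constructible or whether reflexive powers of $\omega_{\mathcal{Z}/S}$ behave well under base change. Your third paragraph is a bit roundabout on exactly this point: once you have $m_0K_{Y_\eta}\sim 0$, the trivializing rational function lies in $k(\mathcal{Z})$, and its divisor on $\mathcal{Z}$ differs from $-m_0K_{\mathcal{Z}/S}$ only by vertical components; discarding their images in $S$ gives an open $U$ over which $m_0K_{\mathcal{Z}/S}$ is principal, hence Cartier and trivial on every fiber, with no separate appeal to semicontinuity or Grauert needed.
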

 \begin{proof}
 Without loss of generality, we may assume that all varieties in $\mathcal{D}$ is of dimension $d$ for some positive integer $d$. 
 Note that by Global ACC  \cite[Theorem 1.5]{HMX14} (see \cite[Lemma 3.12]{rccy3}), there exists a constant $\epsilon \in (0,1)$ such that $Y$ is $\epsilon$-lc for any klt Calabi--Yau variety $Y$ in $\mathcal{D}$.

By definition, there is a quasi-projective scheme $\mathcal{Z}$ and a projective morphism $h: \mathcal{Z}\to T$, where $T$ is of finite type, such that
for every $X\in \mathcal{D}$, there is a closed point $t \in T$ and an isomorphism $f : \mathcal{Z}_t \to X$.
Replacing $T$ by disjoint union of locally closed subsets while taking log resolutions of $\mathcal{Z}$, we may assume that there   are finitely many smooth varieties $T_i$ and   projective morphisms $(\mathcal{W}_i, \mathcal{E}_i)\to \mathcal{Z}_i\to T_i$ such that $(\mathcal{W}_i, \mathcal{E}_i)$ is log smooth over $T_i$ and for every $t\in T_i$, the fiber 
 $(\mathcal{W}_{i,t}, \mathcal{E}_{i, t})$ is a log resolution of $\mathcal{Z}_{i, t}$ with $\mathcal{E}_{i, t}$ the reduced exceptional divisor, and every $X\in \mathcal{D}$ is isomorphic to a fiber of $\mathcal{Z}_i\to T_i$ for some $i$. 
 
 Note that for any $t\in T_i$ such that the fiber $\mathcal{Z}_{i, t}$ is an $\epsilon$-lc Calabi--Yau variety, and for any positive integer $m$, we have
 $$
h^0(\mathcal{W}_{i,t}, \rounddown{m(K_{\mathcal{W}_{i,t}}+(1-\epsilon)\mathcal{E}_{i, t})})=h^0( \mathcal{Z}_{i, t}, mK_{ \mathcal{Z}_{i, t}}).
$$
By \cite[Theorem 4.2]{HMX18}, the left hand side is independent of $t$ for fixed $i$ and $m$.
On the other hand, $h^0( \mathcal{Z}_{i, t}, mK_{ \mathcal{Z}_{i, t}})=1$ if and only if $mK_{ \mathcal{Z}_{i, t}} \sim 0$. Hence for each $i$, the global index of $\mathcal{Z}_{i, t}$, where $\mathcal{Z}_{i, t}$ is an $\epsilon$-lc Calabi--Yau variety, is independent of $t\in T_i$. As there are only finitely many such families, there exists a uniform positive integer $m$ such that if $Y\in \mathcal{D}$ is a klt Calabi--Yau variety, then $mK_Y\sim 0$.
\end{proof}
\begin{proof}[Proof of Corollary \ref{bdd index cy}]
Consider a  klt Calabi--Yau $3$-fold $X$. 

If $X$ has canonical singularities, then we can take a terminalization $\pi: X'\to X$ such that $X'$ has terminal singularities and $K_{X'}=\pi^*K_X$. By \cite{K=0, Morrison}, there exists a positive integer $m_1$ independent of $X'$ such that $m_1 K_{X'}\sim 0$, which implies that $m_1 K_{X}\sim 0$.

If $X$ has worse than canonical singularities, then by Theorem \ref{bdd flop cy}, $X$ is bounded modulo flops, that is, there exists a bounded family of varieties $\mathcal{D}$ such that there  is a normal projective variety $Y\in \mathcal{D}$ isomorphic to $X$ in codimension one. Moreover,  $Y$ is also a Calabi--Yau $3$-fold. Hence by Lemma \ref{mKY}, there exists a uniform positive integer $m_2$ such that $m_2K_Y\sim 0$, which implies that $m_2 K_{X}\sim 0$ as $X$ and $Y$ are isomorphic in codimension one.
\end{proof}

\section*{Acknowledgment}
The author is grateful to Valery Alexeev for  fruitful discussions. This paper (especially Section \ref{section HQ}) could never be finished without his warm encouragement and support. The author would like to thank Jungkai Alfred Chen, Jingjun Han, Yujiro Kawamata,  James M\textsuperscript{c}Kernan, Jihao Liu, Miles Reid, Vyacheslav Shokurov, and Chenyang Xu for discussions and comments during the preparation of this paper. 
Part of this paper was written during  the author's visit to Johns Hopkins University in April 2019, and the author appreciates the support and hospitality. 
 This work was supported by the National Science Foundation under Grant No.~DMS-1440140 while the author
were in residence at the Mathematical Sciences Research Institute in Berkeley, California,
during the Spring 2019 semester. The author also thanks Jihao Liu for discussions on \cite{LiuXiao}.
The author thanks the referees for carefully checking the details and many useful suggestions.

\end{document}